\newtheorem{theorem}{{\sc Theorem}}[section]
\newtheorem{lemma}[theorem]{{\sc Lemma}}
\newcommand{\bb}[1]{\mathbb{ #1}}
\bmdefine\Bone{1}
\newcommand{\defeq}{{\buildrel\rm def\over=}}
\newcommand{\wk}[1]{{\buildrel #1\over\rightharpoonup}\:}
\newcommand{\weak}{\rightharpoonup\:}
\newcommand{\eqv}{\Longleftrightarrow}
\newcommand{\bra}[1]{\overline{#1}}
\newcommand{\hf}{\displaystyle\frac{1}{2}}
\newcommand{\nth}[1]{\displaystyle\frac{1}{#1}}
\newcommand{\Md}{\partial}
\renewcommand{\Hat}[1]{\widehat{#1}}
\newcommand{\Tld}[1]{\widetilde{#1}}
\def\XXint#1#2#3{{\setbox0=\hbox{$#1{#2#3}{\int}$ }
\vcenter{\hbox{$#2#3$ }}\kern-.6\wd0}}
\newcommand\myatop[2]{\genfrac{}{}{0pt}{}{#1}{#2}}
\newcommand{\re}{\Re\mathfrak{e}}
\newcommand{\lims}{\mathop{\overline\lim}}
\newcommand{\limi}{\mathop{\underline\lim}}
\newcommand{\rhs}{right-hand side}
\newcommand{\lhs}{left-hand side}
\newcommand{\WLOG}{without loss of generality}
\newcommand{\IFF}{if and only if }
\newcommand{\Ga}{\alpha}
\newcommand{\Gb}{\beta}
\newcommand{\Gd}{\delta}
\newcommand{\Ge}{\epsilon}
\newcommand{\eps}{\varepsilon}
\newcommand{\Gve}{\varepsilon}
\newcommand{\Gg}{\gamma}
\newcommand{\Gl}{\lambda}
\newcommand{\Gth}{\theta}
\newcommand{\Gs}{\sigma}
\newcommand{\Gz}{\zeta}
\newcommand{\GD}{\Delta}
\newcommand{\GG}{\Gamma}
\newcommand{\GL}{\Lambda}
\newcommand{\GO}{\Omega}
\bmdefine\BGa{\alpha}
\bmdefine\BGb{\beta}
\bmdefine\BGd{\delta}
\bmdefine\BGe{\epsilon}
\bmdefine\BGve{\varepsilon}
\bmdefine\BGf{\phi}
\bmdefine\BGvf{\varphi}
\bmdefine\BGg{\gamma}
\bmdefine\BGc{\chi}
\bmdefine\BGi{\iota}
\bmdefine\BGk{\kappa}
\bmdefine\BGl{\lambda}
\bmdefine\BGn{\eta}
\bmdefine\BGm{\mu}
\bmdefine\BGv{\nu}
\bmdefine\BGp{\pi}
\bmdefine\BGth{\theta}
\bmdefine\BGvth{\vartheta}
\bmdefine\BGr{\rho}
\bmdefine\BGvr{\varrho}
\bmdefine\BGs{\sigma}
\bmdefine\BGvs{\varsigma}
\bmdefine\BGt{\tau}
\bmdefine\BGj{\tau}
\bmdefine\BGu{\upsilon}
\bmdefine\BGo{\omega}
\bmdefine\BGx{\xi}
\bmdefine\BGy{\psi}
\bmdefine\BGz{\zeta}
\bmdefine\BGD{\Delta}
\bmdefine\BGF{\Phi}
\bmdefine\BGG{\Gamma}
\bmdefine\BGL{\Lambda}
\bmdefine\BGP{\Pi}
\bmdefine\BGT{\Theta}
\bmdefine\BGS{\Sigma}
\bmdefine\BGU{\Upsilon}
\bmdefine\BGO{\Omega}
\bmdefine\BGX{\Xi}
\bmdefine\BGY{\Psi}
\bmdefine\BFM{\mathfrak{M}}
\bmdefine\BFb{\mathfrak{b}}
\bmdefine\BFk{\mathfrak{k}}
\bmdefine\BFm{\mathfrak{m}}
\bmdefine\BFu{\mathfrak{u}}
\bmdefine\BFv{\mathfrak{v}}
\newcommand{\CA}{{\mathcal A}}
\newcommand{\CF}{{\mathcal F}}
\newcommand{\CH}{{\mathcal H}}
\newcommand{\CK}{{\mathcal K}}
\newcommand{\CR}{{\mathcal R}}
\bmdefine\BCA{{\mathcal A}}
\bmdefine\BCB{{\mathcal B}}
\bmdefine\BCC{{\mathcal C}}
\bmdefine\BCD{{\mathcal D}}
\bmdefine\BCE{{\mathcal E}}
\bmdefine\BCF{{\mathcal F}}
\bmdefine\BCG{{\mathcal G}}
\bmdefine\BCH{{\mathcal H}}
\bmdefine\BCI{{\mathcal I}}
\bmdefine\BCJ{{\mathcal J}}
\bmdefine\BCK{{\mathcal K}}
\bmdefine\BCL{{\mathcal L}}
\bmdefine\BCM{{\mathcal M}}
\bmdefine\BCN{{\mathcal N}}
\bmdefine\BCO{{\mathcal O}}
\bmdefine\BCP{{\mathcal P}}
\bmdefine\BCQ{{\mathcal Q}}
\bmdefine\BCR{{\mathcal R}}
\bmdefine\BCS{{\mathcal S}}
\bmdefine\BCT{{\mathcal T}}
\bmdefine\BCU{{\mathcal U}}
\bmdefine\BCV{{\mathcal V}}
\bmdefine\BCW{{\mathcal W}}
\bmdefine\BCX{{\mathcal X}}
\bmdefine\BCY{{\mathcal Y}}
\bmdefine\BCZ{{\mathcal Z}}
\bmdefine\Bzr{ 0}
\bmdefine\Ba{ a}
\bmdefine\Bb{ b}
\bmdefine\Bc{ c}
\bmdefine\Bd{ d}
\bmdefine\Be{ e}
\bmdefine\Bf{ f}
\bmdefine\Bg{ g}
\bmdefine\Bh{ h}
\bmdefine\Bi{ i}
\bmdefine\Bj{ j}
\bmdefine\Bk{ k}
\bmdefine\Bl{ l}
\bmdefine\Bm{ m}
\bmdefine\Bn{ n}
\bmdefine\Bo{ o}
\bmdefine\Bp{ p}
\bmdefine\Bq{ q}
\bmdefine\Br{ r}
\bmdefine\Bs{ s}
\bmdefine\Bt{ t}
\bmdefine\Bu{ u}
\bmdefine\Bv{ v}
\bmdefine\Bw{ w}
\bmdefine\Bx{ x}
\bmdefine\By{ y}
\bmdefine\Bz{ z}
\bmdefine\BA{ A}
\bmdefine\BB{ B}
\bmdefine\BC{ C}
\bmdefine\BD{ D}
\bmdefine\BE{ E}
\bmdefine\BF{ F}
\bmdefine\BG{ G}
\bmdefine\BH{ H}
\bmdefine\BI{ I}
\bmdefine\BJ{ J}
\bmdefine\BK{ K}
\bmdefine\BL{ L}
\bmdefine\BM{ M}
\bmdefine\BN{ N}
\bmdefine\BO{ O}
\bmdefine\BP{ P}
\bmdefine\BQ{ Q}
\bmdefine\BR{ R}
\bmdefine\BS{ S}
\bmdefine\BT{ T}
\bmdefine\BU{ U}
\bmdefine\BV{ V}
\bmdefine\BW{ W}
\bmdefine\BX{ X}
\bmdefine\BY{ Y}
\bmdefine\BZ{ Z}
\title{On feasibility of extrapolation of completely monotone functions}
\author{Henry J. Brown \and Yury Grabovsky}
\begin{document}
\maketitle
\begin{abstract}
  The feasibility of extrapolation of completely monotone functions can be
  quantified by examining the worst case scenario, whereby a pair of
  completely monotone functions agree on a given interval to a given relative
  precision, but differ as much as it is theoretically possible at a given
  point. We show that extrapolation is impossible to the left of the interval,
  while the maximal discrepancy to the right exhibits a power law typical for
  extrapolation of similar classes of complex analytic functions. The power
  law exponent is derived explicitly, and shows a precipitous drop immediately
  beyond the right end-point, with a subsequent decay to zero inversely
  proportional to the distance from the interval. The local extrapolation
  problem, where the worst discrepancy from a given completely monotone
  function is sought, is also analyzed. In this case explicit and easily
  verifiable optimality conditions are derived, enabling us to solve the
  problem exactly for a single decaying exponential. In the general case, our
  approach leads to a natural algorithm for computing solutions to the local
  extrapolation problem numerically. The methods developed in this paper can
  easily be adapted to other classes of analytic functions represented as integral
  transforms of positive measures with analytic kernels.
\end{abstract}

\tableofcontents


\section{Introduction}
\setcounter{equation}{0}
\label{sec:intro}
Theory of completely monotone functions (CMF) was developed in the 1920s and
1930s in the works of S. Bernstein \cite{bern29}, F. Hausdorff
\cite{hausdorff21}, V. Widder \cite{widd31,widd34} and Feller \cite{fell39} in
connection with the Markov moment problem \cite{Krein:1977:MMP}.  This class
of functions arises in several areas of mathematics
\cite{kimb74,bazh15,loan19,zast19} and remains of current research interest
(see reviews \cite{misa01,merk14}). Its importance in applications is rapidly
becoming more and more appreciated. Multiexponential models, whereby a
quantity of interest is a linear combination of decaying exponentials
\emph{with positive coefficients} are abundant in physics \cite{isvy99,gmm16},
engineering \cite{gpch00,rate13}, medicine \cite{rlfs09,dgvd10,dhjg13}, and
industry \cite{svhf02,pesc10}.

While the problem of central practical importance in applications is the
estimation of parameters of a multiexponential model
\cite{nies77,ener97,pesc10,nfjk19}, our goal is a theoretical analysis of reliability
of such procedures. To quantify the feasibility of recovery of such
functions from noisy measurements, we look for a pair of
completely monotone functions with relative discrepancy $\Ge$ on $[a,b]\subset[0,\infty)$, as
measured by the $L^{2}$ norm, that differ as much as possible at a given point
$x_{0}\not\in(a,b)$. We show that the discrepancy can be made as large
as one wishes for $0\le x_{0}\le a$, while for $x_{0}\ge b$ the relative discrepancy
scales as $\Ge^{\Gg(x_{0})}$, where
\begin{equation}
  \label{powerlaw}
  \Gg(x_{0})=\frac{2}{\pi}\arcsin\left(\frac{b-a}{x_{0}-a}\right),\quad x_{0}\ge b. 
\end{equation}
An analogous problem has been considered for the class of Stieltjes functions
(see e.g., \cite{stilt1894,Krein:1977:MMP,krnu98}) in \cite{grho-CEMP}.

Our general methodology, developed in \cite{grho-annulus,grho-gen,grho-CEMP}
for the Stieltjes class, can be applicable to many different classes of
functions that can be represented by integral transforms of positive measures
with analytic kernels. For example, CMFs are the Laplace transforms of
positive measures, while the Stieltjes functions, for which this approach was
first developed, are the Stieltjes transforms of positive measures
\cite{widd38}. The main technical difficulty is to link the problem of the
worst discrepancy between a pair of functions in our function class to the
much better understood problem of largest deviation from 0 among functions in
a reproducing kernel Hilbert space of analytic functions (such as Hardy
spaces) that are small on a curve in their domain of analyticity
\cite{ciulli69,mill70,fran90,vese99,deto18,trefe19}. The latter problem can be
reduced to the analysis of the asymptotics of eigenvalues and eigenfunctions
of specific integral operators
\cite{parfenov,gps03,pute17,grho-annulus,grho-gen}. The former is treated
using the same methodology as in \cite{grho-CEMP}, where a family of Hilbert
space norms was constructed that bridge the gap between the Hardy space norm
and the $L^{2}$ norm on the given curve.

We also investigate the local problem of finding a completely monotone
function $g(x)$, such that $\|f_{0}-g\|_{L^{2}(a,b)}\le\Ge$, that
maximizes and minimizes $f_{0}(x)-g(x)$, $x\not\in[a,b]$, where
$f_{0}(x)$ is a given completely monotone function, normalized by
$\|f_{0}\|_{L^{2}(a,b)}=1$. For this problem, we derive necessary and
sufficient conditions for the extremals $g(x)$, using the direct
analysis of the variation due to Caprini
\cite{capr74,capr80,capr81}. Caprini's method has the advantage of
suggesting an algorithm for computing the extremals numerically. The
implementation of this algorithm suggested the exact solutions for
$f_{0}(x)=e^{-x}$, which are then explicitly exhibited and analyzed.
The Caprini analysis-based approach has already been exploited in the
context of extrapolation of Stieltjes functions
\cite{grab_Stielt}. The details and implementation of an analogous
algorithm for completely monotone functions will be addressed
elsewhere.

There are three main innovations in this paper. The reduction to an integral
equation is now done using a new version of Kuhn-Tucker theorem, valid in all
locally convex topological vector spaces, making it applicable to a broader
class of problems. In the case under study, the resulting
integral operator has already been fully analyzed in \cite{kato16}. The theory
in \cite{grho-annulus,grho-gen} shows how the asymptotic behavior of
eigenfunctions for large eigenvalues leads to explicit formulas for exact
exponents in the power laws, like (\ref{powerlaw}).

The second innovation is a nontrivial construction of a continuous family of
Hilbert space norms that bridge the gap between the Hardy space norm and the
$L^{2}(a,b)$ norm. While the constructed family of norms does not bidge the
gap completely, it does so asymptotically. The explicit form of the power law
(\ref{powerlaw}) and the explicit asymptotics of the solution to the integral
equation are essential to establishing the link.

The third, is the worst case analysis of the local problem. There, the necessary
and sufficient conditions for extremality are found and used to compute the
two completely monotone functions deviating the most from a single decaying
exponential, with which they agree up to a relative precision $\Ge$ on a finite
interval. 


\section{Preliminaries and problem formulation}
\setcounter{equation}{0} 
\label{sec:prelim}
We say that $f:(0,\infty)\to[0,\infty)$ beongs to the class CM\footnote{The
  original definition of CMF is a nonnegative $C^{\infty}$ function on
  $(0,\infty)$, whose $k$th derivative is either always positive or always
  negative, depending on whether $k$ is even or odd. It was shown by
  S. Bernstein \cite{bern29} that the two definitions are equivalent.}, if it
can be represented as
\begin{equation}
  \label{intrep}
  f(x)=f_{\Gs}(x)=\int_{0}^{\infty}e^{-xt}d\Gs(t),
\end{equation}
where $\Gs$ is a positive, Borel-regular measure on $[0,\infty)$, such that
$f(x)<\infty$ for all $x>0$. In what follows, we will adopt the notation
$f_{\Gs}(x)$ to denote the function given by (\ref{intrep}).  Formula
(\ref{intrep}) implies that $f\in\CH(\CR)$, where
$\CR=\{z\in\bb{C}:\Re\,z>0\}$ is the complex right half-plane, and $\CH(\GO)$
denotes the space of all complex analytic functions on the open set
$\GO\subset\bb{C}$. The uniqueness property of analytic functions suggests
that the knowledge of a CMF on an interval $[a,b]$ should determine
such a function uniquely. In practice, where $f(x)$ is known only
approximately, the feasibility of extrapolation becomes a nontrivial question
that we address in this paper. Specifically, we assume that we know the values
of a CMF $f(x)$ on the interval $[a,b]$ up to a given relative
precision $\Ge$ in $L^{2}(a,b)$. We want to know how accurately we can
extrapolate this function outside of $[a,b]$. One immediately observes that
for any given CMF $f(x)$ the function
$f_{K}(x)=f(x)+\Ge\sqrt{2K}e^{-K(x-a)}$ is completely monotone for any $K>0$, and that
$\|f_{K}(x)-f(x)\|_{L^{2}(a,b)}\le\Ge$. However, for any $c\in[0,a]$, we can
make $f_{K}(c)-f(c)$ as large as we wish by choosing $K$ sufficiently
large. This shows that if we know that a pair of CMFs has a relative
discrepancy $\Ge$ in $L^{2}(a,b)$, their discrepancy at $x\le a$ can be made as
large as one wishes. We therefore conclude that we may assume, \WLOG, that $a=0$ and
rescale $b$ to 1. For this reason, we restrict our attention to a
subclass $\mathfrak{C}_{2}$ of CMFs defined by
\begin{equation}
  \label{C2def}
  \mathfrak{C}_{2}=\{f\in{\rm CM}:\|f\|_{2}<+\infty\},
\end{equation}
where $\|\cdot\|_{2}$ denotes the $L^{2}(0,1)$ norm.
We note that $\mathfrak{C}_{2}$ is not a vector space, but a convex cone. The
natural vector space the cone $\mathfrak{C}_{2}$ lies in is
$\mathfrak{X}=\mathfrak{C}_{2}-\mathfrak{C}_{2}$, which is a real vector
space, even though its elements are complex-analytic functions on $\CR$.

To formulate the problem of the worst case extrapolation, we
 denote
\begin{equation}
  \label{GDfg}
  \GD[f,g](x)=\frac{f(x)-g(x)}{\|f\|_{2}+\|g\|_{2}}, 
\end{equation}
describing the relative discrepancy at the point $x$ between the two functions
$\{f,g\}\subset\mathfrak{C}_{2}$. The worst case extrapolation problem is
\begin{equation}
  \label{fgproblem}
\GD^{x_{0}}(\Ge)=\max_{\|\GD[f,g]\|_{2}\le\Ge}|\GD[f,g](x_{0})|,
\end{equation}
where $x_{0}\ge 1$ is a given point. In other words, we seek the largest relative
discrepancy between two $\mathfrak{C}_{2}$ functions, that are at most $\Ge$
apart on $[0,1]$ in the $L^{2}$ sense. Our primary goal is to prove formula
(\ref{powerlaw}), which is equivalent to the following theorem.
\begin{theorem}
  \label{th:main}
Let $x_{0}\ge 1$, then
\begin{equation}
  \label{pl0}
\Gg(x_{0})\defeq\lim_{\Ge\to 0^{+}}\frac{\ln \GD^{x_{0}}(\Ge)}{\ln\Ge}=\frac{2}{\pi}\arcsin\left(\nth{x_{0}}\right),
\end{equation}
where $\GD^{x_{0}}(\Ge)$ is given by (\ref{fgproblem}), and the limit in
(\ref{pl0}) exists.
\end{theorem}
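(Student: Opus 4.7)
The plan is to reduce the saddle problem \eqref{fgproblem} to the spectral analysis of a concrete integral operator and to extract the exponent $\gamma(x_{0})$ from its asymptotic eigenvalue decay, with the factor $(2/\pi)\arcsin(1/x_{0})$ emerging as a harmonic–measure / two-constants invariant of the domain $\CR\setminus[0,1]$.

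First, exploit positive homogeneity: the ratio $\GD[f,g](x_{0})$ is scale-invariant, so I normalize $\|f\|_{2}+\|g\|_{2}=1$, write $f-g=f_{\mu}$ for a signed measure $\mu=\sigma_{f}-\sigma_{g}$ with $\sigma_{f},\sigma_{g}\ge 0$, and reformulate \eqref{fgproblem} as
\[
  \GD^{x_{0}}(\Ge)=\sup\bigl\{|f_{\mu}(x_{0})|:\sigma_{f},\sigma_{g}\ge 0,\ \|f_{\sigma_{f}}\|_{2}+\|f_{\sigma_{g}}\|_{2}=1,\ \|f_{\mu}\|_{2}\le\Ge\bigr\}.
\]
Applying the locally convex version of Kuhn--Tucker mentioned in the introduction, the Lagrangian stationarity conditions at an extremal pair reduce to an integral equation on $[0,\infty)$ whose kernel is
\[
  K(s,t)=\int_{0}^{1}e^{-x(s+t)}\,dx=\frac{1-e^{-(s+t)}}{s+t},
\]
precisely the operator whose spectral theory is analyzed in \cite{kato16}.

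Second, following the methodology of \cite{grho-annulus,grho-gen,grho-CEMP}, I would transfer the extrapolation question to a Hardy-type problem for functions in $\CH(\CR)$. The normalized eigenfunctions $\{\varphi_{n}\}$ of the integral operator, with eigenvalues $\lambda_{n}\downarrow 0$, produce an orthonormal system $\psi_{n}(x)=\lambda_{n}^{-1/2}\int_{0}^{\infty}e^{-xt}\varphi_{n}(t)\,dt$ in $L^{2}(0,1)$; in this basis the evaluation $f_{\mu}(x_{0})=\sum_{n}c_{n}\int_{0}^{\infty}e^{-x_{0}t}\varphi_{n}(t)\,dt$ expands with $|c_{n}|\le\|f_{\mu}\|_{2}/\sqrt{\lambda_{n}}$. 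Both the geometric decay rate of $\lambda_{n}$ and the growth rate of the evaluation integrals $\int_{0}^{\infty}e^{-x_{0}t}\varphi_{n}(t)\,dt$ are readable from the known asymptotics of $\varphi_{n}$ supplied by \cite{kato16}, and the ratio of the two rates is precisely $\gamma(x_{0})$. The conceptual reason for the arcsin formula is that this ratio coincides with the harmonic measure of $[0,1]$ at $x_{0}$ in $\CR\setminus[0,1]$, which the conformal map onto a half-strip makes explicit.

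The hardest step, flagged as the second innovation, is bridging the $L^{2}(0,1)$ norm appearing in the definition of $\GD^{x_{0}}(\Ge)$ with the natural ``Hardy'' norm dictated by the eigenbasis. I plan to construct a continuous one-parameter family of Hilbert-space norms on $\mathfrak{X}$ interpolating between these two; although the family does not exactly collapse the gap, it does so \emph{asymptotically}, which suffices to take the $\Ge\to 0^{+}$ limit in \eqref{pl0}. Combined with a two-constants estimate propagated through this family, this yields the upper bound $\GD^{x_{0}}(\Ge)\le\Ge^{\gamma(x_{0})+o(1)}$. The matching lower bound is produced by constructing explicit pairs $(f_{\sigma_{f}},f_{\sigma_{g}})$ from the positive and negative parts of carefully tuned finite combinations $\sum_{n\le N}c_{n}\varphi_{n}$, so that both the cone constraint $\sigma_{f},\sigma_{g}\ge 0$ and the $L^{2}(0,1)$ discrepancy are saturated to leading order. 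Ensuring that the bridging family controls the stronger normalization $\|f\|_{2}+\|g\|_{2}$, rather than merely $\|f-g\|_{2}$, is the main technical obstacle to matching the exponents on both sides.
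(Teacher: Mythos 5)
Your high-level plan is aligned with the paper's strategy, but there are several concrete gaps that would derail the argument as written.

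First, the integral operator you name is not the one the paper diagonalizes. The kernel $(1-e^{-(s+t)})/(s+t)$ on $L^{2}(0,\infty)$ is $\GL\GL^{*}$ (it appears only in the proof of Lemma~\ref{lem:L2star}); the operator actually reduced to and then diagonalized via \cite{kato16} is $K=\GL^{*}\GL$ acting on $L^{2}(0,1)$ with kernel $1/(x+y)$, which is related to the two-interval truncated Hilbert transform by $K^{2}=H_{1}^{*}H_{1}$. These operators share non-zero spectrum, but the explicit eigenfunctions $u(x;\mu)$ and the $u$-transform of \cite{kato16} live on $L^{2}(0,1)$, and the whole asymptotic machinery (Lemmas~\ref{lem:ux0asym}, \ref{lem:intasymp}, Theorem~\ref{th:psiyp}) is built on that side.

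Second, and more seriously, you treat $K$ as if it were compact: you expand in an orthonormal eigenbasis $\{\varphi_{n}\}$ with eigenvalues $\lambda_{n}\downarrow 0$ and read off a "decay rate.'' The operator $K$ has \emph{continuous} spectrum (the two subintervals share an endpoint), so there is no such discrete basis, and the exponent cannot come from a ratio of geometric decay rates of eigenvalues. The paper instead uses the continuous diagonalization (\ref{utransform})–(\ref{unorm}), solves the quadratic relaxation (\ref{quadmin}) of the Kuhn–Tucker dual, expresses $\psi_{\Gve}(x_{0})$, $\|\psi_{\Gve}\|_{2}$, $\|\psi_{\Gve}\|$ as integrals over the continuous spectral parameter $\mu$, and extracts asymptotics from the exponential growth of $u(x_{0};\mu)$ as $\mu\to\infty$ via Lemma~\ref{lem:intasymp}. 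Your framework would have to be rebuilt around the continuous transform before any exponent can be extracted.

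Third, you propose to apply Kuhn–Tucker directly to the cone-constrained $(f,g)$-problem. The paper deliberately avoids this: the positivity constraints $\sigma_{f},\sigma_{g}\ge 0$ make that problem intractable, so it introduces the unconstrained $\phi$-problem (\ref{phiproblem}) in the Hardy space $H$, solves that explicitly, and only afterwards transfers the result back to the $(f,g)$-problem through the chain of inequalities (\ref{pstUB})–(\ref{gammaLB})–(\ref{chain}) using the bridging norms $\|\cdot\|_{\mathfrak{H}_{p}}$ and the positive/negative decomposition $\phi=\phi_{+}-\phi_{-}$. You do identify the bridging-norms idea and the $\pm$-decomposition for the lower bound, which is correct, but without the $\phi$-problem as the intermediate object the upper bound has no concrete anchor: there is nothing explicit to estimate in the intermediate norms, which is precisely what the paper flags as the reason the Stieltjes analogue in \cite{grho-CEMP} could not be completed.

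Finally, the "harmonic measure'' heuristic for the arcsin is conceptually apt, but the paper does not invoke harmonic measure or a two-constants theorem; the arcsin emerges algebraically from the conformal map $\alpha(z)=\arccos(1/z)$ controlling the growth of $u(z;\mu)$ (Lemma~\ref{lem:expbnd}, Appendix~\ref{app:HG}). Presenting harmonic measure as the mechanism, rather than as motivation, overstates what the sketch actually establishes.
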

The idea of the proof is to relate (\ref{fgproblem}), that
we call the $(f,g)$-problem, to a simpler problem that we
know how to solve explicitly:
\begin{equation}
  \label{phiproblem}
  \GD^{x_{0}}_{*}(\Ge)=\max_{\phi\in\CA_{\Ge}}\phi(x_{0}),\quad
\CA_{\Ge}=\{\phi\in H:\|\phi\|\le 1,\|\phi\|_{2}\le\Ge\},
\end{equation}
where $H=\{\phi\in H^{2}(\CR):\bra{\phi(z)}=\phi(\bar{z})\}$
is a real subspace of the standard Hardy Hilbert space $H^{2}(\CR)$, and
where $\|\cdot\|$ is the multiple of the standard Hardy space norm
\begin{equation}
  \label{norm}
\|\phi\|^{2}=\sup_{x>0}\nth{2\pi}\int_{\bb{R}}|\phi(x+iy)|^{2}dy=\nth{2\pi}\int_{\bb{R}}|\phi(iy)|^{2}dy=\nth{\pi}\int_{0}^{\infty}|\phi(iy)|^{2}dy.  
\end{equation}
We call (\ref{phiproblem}) the $\phi$-problem. We note that the
Hardy space $H^{2}(\CR)$ is a reproducing kernel Hilbert space (see,
e.g. \cite{davis52}), and problems like (\ref{phiproblem}) have been
well-understood \cite{grho-annulus,grho-CEMP}. Our goal is to
show both that
\begin{equation}
  \label{goal}
    \Gg_{*}(x_{0})\defeq\lim_{\Ge\to 0}\frac{\ln\GD^{x_{0}}_{*}(\Ge)}{\ln\Ge}=\Gg(x_{0}),
\end{equation}
and that $\Gg_{*}(x_{0})$ is equal to the \rhs\ of (\ref{pl0}).
We follow here the same strategy that was used in \cite{grho-CEMP} in an analogous problem about
Stieltjes functions. The main difference (and
therefore difficulty) is that the Hardy norm $\|\cdot\|$ is not equivalent
to $\|\cdot\|_{2}$ on the convex cone $\mathfrak{C}_{2}$. This makes the direct
comparison between $\Gg(x_{0})$ and $\Gg_{*}(x_{0})$ impossible.

Our way of resolving this difficulty is to bridge the gap between the two
norms by introducing a continuous family of intermediate Hardy space-like
norms of increasing strength on
$\mathfrak{X}=\mathfrak{C}_{2}-\mathfrak{C}_{2}$, all of which are a
equivalent to $\|\cdot\|_{2}$ on $\mathfrak{C}_{2}$. Each norm in the family gives rise to
the corresponding $\phi$-problem (\ref{phiproblem}), where it replaces the
Hardy norm $\|\cdot\|$. What permits us to close the circle of
inequalities between the corresponding power law exponents $\Gg$ is our
ability to solve the the original $\phi$-problem (\ref{phiproblem}) explicitly and thus
estimate all of its intermediate norms directly. We remark that it is the absence of
the explicit solution of the $\phi$-problem in \cite{grho-CEMP} that prevented
us from completing the rigorous proof of the analog of (\ref{goal}) in the
context of Stieltjes functions. 


\section{Existence of maximizers}
\setcounter{equation}{0}
The goal of this section is to prove the attainment of the maxima both in
(\ref{fgproblem}) and in (\ref{phiproblem}).
We start by proving the representation property of functions in $H$.
\begin{lemma}
  \label{lem:Hrep}
For any $\phi\in H$, there exists $\Gs\in L^{2}(0,\infty)$, $\Gs(t)\in\bb{R}$,
such that
\begin{equation}
  \label{Hfrep}
  \phi(z)=\int_{0}^{\infty}\Gs(t)e^{-zt}dt,\quad\Re\,z>0.
\end{equation}
\end{lemma}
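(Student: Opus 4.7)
The claim is essentially the Paley--Wiener theorem for the right half-plane together with a routine check that the reality symmetry $\overline{\phi(z)}=\phi(\bar z)$ forces the density to be real. The plan is to first invoke the standard Paley--Wiener isomorphism to produce a complex-valued $\sigma\in L^2(0,\infty)$ representing $\phi$, and then to use the reality symmetry to upgrade $\sigma$ to a real-valued function.

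First, I would recall that the Laplace transform
\begin{equation*}
\sigma\longmapsto \phi(z)=\int_{0}^{\infty}\sigma(t)e^{-zt}dt
\end{equation*}
defines an isometric isomorphism from $L^{2}(0,\infty)$ (with its usual norm) onto $H^{2}(\CR)$ equipped with the norm (\ref{norm}). Indeed, for $\phi\in H^{2}(\CR)$ the nontangential boundary trace $\phi(iy)$ exists in $L^{2}(\bb{R})$ with $\|\phi\|^{2}=\frac{1}{2\pi}\int_{\bb{R}}|\phi(iy)|^{2}dy$, and Paley--Wiener asserts that the inverse Fourier transform $\sigma(t)=\frac{1}{2\pi}\int_{\bb{R}}\phi(iy)e^{iyt}dy$ is supported on $[0,\infty)$ and satisfies (\ref{Hfrep}) throughout $\CR$. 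Applying this to the given $\phi\in H\subset H^{2}(\CR)$ produces a (a priori complex-valued) $\sigma\in L^{2}(0,\infty)$ with $\phi(z)=\int_{0}^{\infty}\sigma(t)e^{-zt}dt$.

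Second, I would verify that the reality condition built into the definition of $H$ forces $\sigma$ to be real. The identity $\overline{\phi(z)}=\phi(\bar z)$ restricted to the imaginary axis gives $\overline{\phi(iy)}=\phi(-iy)$ a.e. Substituting into the Fourier inversion formula for $\sigma$ and changing variables $y\mapsto -y$ yields
\begin{equation*}
\overline{\sigma(t)}=\nth{2\pi}\int_{\bb{R}}\overline{\phi(iy)}\,e^{-iyt}dy=\nth{2\pi}\int_{\bb{R}}\phi(-iy)\,e^{-iyt}dy=\nth{2\pi}\int_{\bb{R}}\phi(iy)\,e^{iyt}dy=\sigma(t),
\end{equation*}
so $\sigma$ is real-valued a.e. on $(0,\infty)$, as required.

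There is not really a ``main obstacle'' here beyond correctly citing the Paley--Wiener isomorphism in the form adapted to the right half-plane and the normalization (\ref{norm}); both the support property of $\sigma$ and the $L^{2}$ isometry are classical. The only new ingredient is the elementary Fourier symmetry argument in the second step, which is a short computation rather than a substantive obstruction.
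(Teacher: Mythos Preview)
Your argument is correct. Both proofs arrive at the same representation, but the order of the two steps is reversed and the tool used to pin down the support of $\sigma$ differs. You invoke the Paley--Wiener isomorphism as a black box to get $\sigma\in L^{2}(0,\infty)$ immediately, and only afterwards use the symmetry $\overline{\phi(iy)}=\phi(-iy)$ to force $\sigma$ to be real. The paper instead starts from the full-line Fourier transform of the boundary trace to get $\sigma\in L^{2}(\bb{R})$, uses the symmetry first to make $\sigma$ real, and then deduces $\sigma|_{(-\infty,0)}=0$ from the Kramers--Kronig relation (equivalently, from the fact that the Hilbert transform is the Fourier multiplier $i\,\mathrm{sign}(t)$). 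Your route is shorter and more standard if Paley--Wiener is taken as known; the paper's route is slightly more self-contained in that it rederives the support conclusion from the Hilbert-transform structure rather than citing Paley--Wiener.
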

\begin{proof}
If $\phi\in H$, then $\phi(iy)\in L^{2}(\bb{R})$, and therefore, there
exists $\Gs\in L^{2}(\bb{R})$, such that
\[
\phi(iy)=\hat{\Gs}(y)=\int_{\bb{R}}\Gs(t)e^{-iyt}dt.
\]
The symmetry of functions in $H$, i.e. $\bra{\phi(iy)}=\phi(-iy)$ implies that
$\Gs(t)\in\bb{R}$. Since $H$ is a subspace of the Hardy space $H^{2}(\CR)$,
for any $\phi\in H$ there is the Kramers-Kronig relation
\cite{kron26,kramers27} that says that the real part of $\phi(iy)$ is the
Hilbert transform of its imaginary part. Since the Hilbert transform is a
Fourier multiplier operator by $i\,$sign$(t)$, the Kramers-Kronig relation can be
written as $\Re\hat{g}(y)=0$, where $g(t)=\Gs(t)-\Gs(t)$sign$(t)$. But then,
$g(t)$ has to be an odd function on $\bb{R}$. We conclude that
$g(t)$ must be identically zero since it is zero on $(0,\infty)$. It follows
that $\Gs(t)=0$ for all $t<0$, and
\[
\phi(iy)=\hat{\Gs}(y)=\int_{0}^{\infty}\Gs(t)e^{-iyt}dt,\quad y\in\bb{R}.
\]
Therefore representation (\ref{Hfrep}) holds since
Hardy functions possess a unique analytic extension into the complex right half-plane. 
\end{proof}
We remark that in view of representation (\ref{Hfrep}) the Hardy inner product
in $H$ can also be computed as
\begin{equation}
  \label{Hinpr}
  (\phi_{\Gs},\phi_{\mu})=(\Gs,\mu)_{L^{2}(0,\infty)}.
\end{equation}
To establish attainment in (\ref{phiproblem}), we need the following lemma.
\begin{lemma}
  \label{lem:HL2}
For any $\phi\in H$
  \begin{equation}
    \label{HL2}
    \|\phi\|_{2}\le\sqrt{\pi}\|\phi\|.
  \end{equation}
\end{lemma}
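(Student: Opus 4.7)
The plan is to convert the claim into an estimate on the Laplace density $\sigma$ supplied by Lemma~\ref{lem:Hrep}, and then to recognize that the resulting kernel on $L^{2}(0,\infty)$ is pointwise dominated by the classical Hilbert kernel $1/(s+t)$, whose sharp $L^{2}$ operator bound is $\pi$.

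First, by Lemma~\ref{lem:Hrep}, every $\phi\in H$ has the representation $\phi(z)=\int_{0}^{\infty}\sigma(t)e^{-zt}\,dt$ with a real-valued $\sigma\in L^{2}(0,\infty)$. Applying Plancherel's theorem to the boundary formula (\ref{norm}) (this is precisely the content of (\ref{Hinpr})), one has $\|\phi\|^{2}=\|\sigma\|_{L^{2}(0,\infty)}^{2}$, so (\ref{HL2}) becomes the purely density-side inequality $\|\phi\|_{2}^{2}\le\pi\,\|\sigma\|_{L^{2}(0,\infty)}^{2}$.

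Next, since $\sigma$ is real, $\phi$ is real on $(0,\infty)$, and Fubini--Tonelli give
\[
\|\phi\|_{2}^{2}=\int_{0}^{1}\phi(x)^{2}\,dx=\int_{0}^{\infty}\!\!\int_{0}^{\infty}\sigma(s)\sigma(t)\,K(s,t)\,ds\,dt,
\]
where $K(s,t)=\int_{0}^{1}e^{-x(s+t)}\,dx=(1-e^{-(s+t)})/(s+t)$. The use of Fubini is justified since the integrand is dominated by $|\sigma(s)||\sigma(t)|/(s+t)$, which is integrable thanks to Hilbert's inequality itself. Because $0\le K(s,t)\le 1/(s+t)$, I would then estimate
\[
\|\phi\|_{2}^{2}\le\int_{0}^{\infty}\!\!\int_{0}^{\infty}\frac{|\sigma(s)||\sigma(t)|}{s+t}\,ds\,dt\le\pi\,\|\sigma\|_{L^{2}(0,\infty)}^{2},
\]
the final step being the classical Hilbert integral inequality with sharp constant $\pi$, applied to $f=g=|\sigma|$. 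Taking square roots yields (\ref{HL2}).

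The proof is short and there is no serious technical obstacle; the only conceptual step is to notice that the kernel $K(s,t)$ arising naturally from the $L^{2}(0,1)$-norm of $\phi$ is pointwise a minorant of the Hilbert kernel, which places the estimate immediately within reach of a classical sharp inequality. The constant $\sqrt{\pi}$ in (\ref{HL2}) is clearly not sharp, but this is immaterial for the subsequent use of the lemma.
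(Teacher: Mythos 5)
Your proof is correct and follows essentially the same route as the paper: represent $\phi$ via Lemma~\ref{lem:Hrep}, identify $\|\phi\|$ with $\|\sigma\|_{L^{2}(0,\infty)}$ via (\ref{Hinpr}), and reduce the $L^{2}(0,1)$ bound to Hilbert's inequality for the kernel $1/(s+t)$. The only cosmetic difference is that the paper first enlarges $\int_{0}^{1}$ to $\int_{0}^{\infty}$ (yielding the Hilbert kernel exactly) and then expresses the bilinear form via the Hilbert transform, whereas you keep the finite interval, observe the pointwise domination $K(s,t)\le 1/(s+t)$, and invoke Hilbert's inequality directly; these are two presentations of the same estimate.
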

\begin{proof}
Using representation (\ref{Hfrep}), we have
\[
\|\phi\|_{2}^{2}\le\|\phi\|_{L^{2}(0,\infty)}^{2}=\int_{0}^{\infty}\int_{0}^{\infty}\frac{\Gs(s)\Gs(t)}{s+t}dsdt=
\pi ((H\Gs)(-t),\Gs(t))_{L^{2}(\bb{R})},
\]
where $H\Gs$ is the Hilbert transform and $\Gs\in L^{2}(0,\infty)$ is extended by
zero on $(-\infty,0)$ to yield a function in $L^{2}(\bb{R})$.
Hence
\[
\|\phi\|_{2}^{2}\le\pi\|(H\Gs)(-t)\|_{L^{2}(\bb{R})}\|\Gs\|_{L^{2}(\bb{R})}=\pi\|\Gs\|_{L^{2}(\bb{R})}^{2}=\pi\|\phi\|^{2}.
\]
\end{proof}
The attainment in (\ref{phiproblem}) is now obvious since $\CA_{\Ge}$ is
closed, convex, and bounded in $H$, and the evaluation functional
$H\ni\phi\mapsto\phi(x_{0})$ is continuous. (It is obvious, for example, from
representation (\ref{Hfrep}) and the fact that $e^{-x_{0}t}\in
L^{2}(0,\infty)$.)

To prove the attainment in (\ref{fgproblem}), we need the following lemma.
\begin{lemma}
  \label{lem:L2star}
For any $f\in\mathfrak{C}_{2}$
  \begin{equation}
    \label{L2star}
    \|f_{\Gs}\|_{2}\ge\|\Gs\|_{*},
  \end{equation}
where
\begin{equation}
  \label{star}
  \|\Gs\|_{*}=\int_{0}^{\infty}\frac{d\Gs(t)}{t+1}.
\end{equation}
\end{lemma}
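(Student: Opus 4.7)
The plan is to obtain the lower bound $\|f_\sigma\|_2 \ge \|\sigma\|_*$ in two elementary steps: first reduce the $L^2$ norm to an $L^1$ integral via Cauchy--Schwarz, and then compare the resulting integral to $\|\sigma\|_*$ by a pointwise inequality on the Laplace kernel after integration.

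For the first step, I would apply Cauchy--Schwarz on $[0,1]$ with the constant function $1$:
\[
\int_{0}^{1}f_{\sigma}(x)\,dx \;=\; \int_{0}^{1}f_{\sigma}(x)\cdot 1\,dx \;\le\; \|f_{\sigma}\|_{2}\,\|1\|_{L^{2}(0,1)} \;=\; \|f_{\sigma}\|_{2}.
\]
Since $f_\sigma \ge 0$ and the integrand $e^{-xt}$ is nonnegative, Fubini (Tonelli) gives
\[
\int_{0}^{1}f_{\sigma}(x)\,dx \;=\; \int_{0}^{\infty}\!\int_{0}^{1}e^{-xt}\,dx\,d\sigma(t) \;=\; \int_{0}^{\infty}\frac{1-e^{-t}}{t}\,d\sigma(t),
\]
with the integrand understood to equal $1$ at $t=0$ by continuous extension.

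For the second step, I need the pointwise estimate
\[
\frac{1-e^{-t}}{t}\;\ge\;\frac{1}{t+1},\qquad t\ge 0,
\]
which is equivalent to $(1+t)e^{-t}\le 1$. This is immediate: the function $h(t)=(1+t)e^{-t}$ satisfies $h(0)=1$ and $h'(t)=-te^{-t}\le 0$, so $h$ is nonincreasing on $[0,\infty)$ and stays bounded above by $1$. Integrating this inequality against $d\sigma$ yields
\[
\int_{0}^{\infty}\frac{1-e^{-t}}{t}\,d\sigma(t)\;\ge\;\int_{0}^{\infty}\frac{d\sigma(t)}{t+1}\;=\;\|\sigma\|_{*},
\]
and combining with the first step gives (\ref{L2star}).

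No real obstacle is anticipated: the only point to be mildly careful about is the applicability of Fubini and the handling of the $1/t$ singularity at $t=0$, but nonnegativity of all quantities makes Tonelli applicable unconditionally, and the removable singularity is harmless. The scalar inequality $(1+t)e^{-t}\le 1$ is the only piece of real content, and its proof via monotonicity is one line.
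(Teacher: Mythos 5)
Your proof is correct, and it takes a genuinely different route from the paper's. The paper works with the squared norm
\[
\|f_{\Gs}\|_{2}^{2}=\int_{0}^{\infty}\int_{0}^{\infty}\frac{1-e^{-(t+s)}}{t+s}\,d\Gs(t)\,d\Gs(s),
\]
applies the pointwise bound $\frac{1-e^{-x}}{x}\ge\frac{1}{x+1}$ with $x=t+s$, and then needs a further factorization step $\frac{1}{t+s+1}\ge\frac{1}{(t+1)(s+1)}$ to split the double integral into $\|\Gs\|_{*}^{2}$. You instead note that $\|1\|_{L^{2}(0,1)}=1$ so Cauchy--Schwarz gives $\int_{0}^{1}f_{\Gs}\,dx\le\|f_{\Gs}\|_{2}$, reduces by Tonelli to the single integral $\int_{0}^{\infty}\frac{1-e^{-t}}{t}\,d\Gs(t)$, and then invoke the same scalar inequality $(1+t)e^{-t}\le 1$ just once. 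Both proofs turn on the identical elementary estimate; the difference is that you trade the squared-norm double integral and kernel factorization for an application of Cauchy--Schwarz, which shortens the argument and avoids the second inequality the paper needs. The results and constants agree, and your handling of Tonelli and the removable $1/t$ singularity is exactly right.
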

\begin{proof}
  Using representation (\ref{intrep}), we compute
\[
\|f_{\Gs}\|_{2}^{2}=\int_{0}^{\infty}\int_{0}^{\infty}\frac{1-e^{-(t+s)}}{t+s}d\Gs(t)d\Gs(s).
\]
Now observe that since $\min_{x\ge 0}x^{-1}(x+1)(1-e^{-x})=1$, then
for any $s>0$ and $t>0$ we have
\[
  \frac{1-e^{-(t+s)}}{t+s}\ge\frac{1}{t+s+1}\ge\nth{(t+1)(s+1)}.
\]
Inequality (\ref{L2star}) follows.
\end{proof}
We are now ready to prove the attainment in (\ref{fgproblem}). 
\begin{theorem}
  The maximum in (\ref{fgproblem}) is attained.
\end{theorem}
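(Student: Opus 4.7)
The plan is to prove attainment by weak-$*$ compactness of the representing measures of a maximizing sequence, combined with a correction step to restore normalization if mass escapes to infinity. Using the invariance of the functional $\GD[f,g]$ under the common rescaling $(f,g)\mapsto(\Gl f,\Gl g)$ with $\Gl>0$, I take a maximizing sequence $(f_{n},g_{n})\subset\mathfrak{C}_{2}\times\mathfrak{C}_{2}$ and normalize so that $\|f_{n}\|_{2}+\|g_{n}\|_{2}=1$; the constraint then reads $\|f_{n}-g_{n}\|_{2}\leq\Ge$ and the objective values satisfy $|f_{n}(x_{0})-g_{n}(x_{0})|\to\GD^{x_{0}}(\Ge)$. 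Writing $f_{n}=f_{\Gs_{n}}$ and $g_{n}=f_{\mu_{n}}$ via (\ref{intrep}), Lemma~\ref{lem:L2star} delivers the uniform bounds $\|\Gs_{n}\|_{*},\|\mu_{n}\|_{*}\leq 1$, which is exactly the statement that the reweighted positive measures $d\Tld{\Gs}_{n}(t):=(1+t)^{-1}d\Gs_{n}(t)$ and $d\Tld{\mu}_{n}(t):=(1+t)^{-1}d\mu_{n}(t)$ have total mass at most $1$ on $[0,\infty)$.

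By the Banach--Alaoglu theorem in $M([0,\infty))=C_{0}([0,\infty))^{*}$, after extracting a subsequence I may assume $\Tld{\Gs}_{n}\weak\Tld{\Gs}$ and $\Tld{\mu}_{n}\weak\Tld{\mu}$ weakly-$*$, for some positive Radon limits. Setting $d\Gs:=(1+t)\,d\Tld{\Gs}$ and $d\mu:=(1+t)\,d\Tld{\mu}$, I note that for each fixed $x>0$ the function $\psi_{x}(t):=(1+t)e^{-xt}$ belongs to $C_{0}([0,\infty))$, so that
\[
f_{n}(x)=\int_{0}^{\infty}\psi_{x}(t)\,d\Tld{\Gs}_{n}(t)\longrightarrow\int_{0}^{\infty}e^{-xt}\,d\Gs(t)=:f(x),
\]
and similarly $g_{n}(x)\to g(x):=f_{\mu}(x)$ pointwise on $(0,\infty)$. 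The limits $f,g$ therefore lie in CM and are finite on $(0,\infty)$, since each $\psi_{x}$ is bounded. Fatou's lemma applied to $f_{n}^{2}$, $g_{n}^{2}$, and $|f_{n}-g_{n}|^{2}$ on $(0,1]$ then yields the lower semicontinuity bounds $\|f\|_{2}+\|g\|_{2}\leq 1$ and $\|f-g\|_{2}\leq\Ge$, while pointwise evaluation at $x_{0}$ gives $|f(x_{0})-g(x_{0})|=\GD^{x_{0}}(\Ge)$.

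The main obstacle, and the only nontrivial step remaining, is that mass may escape to infinity in the weak-$*$ limit, leaving $\tau:=\|f\|_{2}+\|g\|_{2}$ strictly less than $1$, so that the pair $(f,g)$ fails the normalization required to be feasible. I would resolve this by translating both functions by a common completely monotone function: set $(\Tld{f},\Tld{g}):=(f+c\psi,g+c\psi)$ with $\psi(x):=e^{-x}\in\mathfrak{C}_{2}$, and choose $c\geq 0$ via the intermediate value theorem so that $\|\Tld{f}\|_{2}+\|\Tld{g}\|_{2}=1$ (the map $c\mapsto\|f+c\psi\|_{2}+\|g+c\psi\|_{2}$ is continuous on $[0,\infty)$, equals $\tau\leq 1$ at $c=0$, and tends to $+\infty$ as $c\to\infty$). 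Because $\Tld{f}-\Tld{g}=f-g$ and $\Tld{f}(x_{0})-\Tld{g}(x_{0})=f(x_{0})-g(x_{0})$, the corrected pair $(\Tld{f},\Tld{g})\in\mathfrak{C}_{2}\times\mathfrak{C}_{2}$ is feasible and attains $|\GD[\Tld{f},\Tld{g}](x_{0})|=\GD^{x_{0}}(\Ge)$, completing the argument; the degenerate case $\GD^{x_{0}}(\Ge)=0$ is trivial.
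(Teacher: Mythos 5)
Your proof is correct and follows the same structural route as the paper's: normalize the maximizing sequence so that $\|f_{n}\|_{2}+\|g_{n}\|_{2}=1$, use Lemma~\ref{lem:L2star} to get a uniform total-variation bound on the reweighted representing measures, extract a weak-$*$ convergent subsequence, and conclude pointwise convergence of $f_{n}(x_{0})-g_{n}(x_{0})$ plus $L^{2}$ lower semicontinuity for the constraint. The substantive difference is your final renormalization step. The paper's proof stops at the assertion that weak lower semicontinuity of the $L^{2}(0,1)$ norm makes the limiting pair feasible, but weak lower semicontinuity only gives $\|f_{0}\|_{2}+\|g_{0}\|_{2}\le 1$ and $\|f_{0}-g_{0}\|_{2}\le\Ge$; if the first inequality is strict (mass escaping to $t=\infty$ via measures like $\sqrt{2K_{n}}\,\Gd_{K_{n}}$ loses $L^{2}$ mass in the limit while staying bounded in $\|\cdot\|_{*}$), the ratio $\|f_{0}-g_{0}\|_{2}/(\|f_{0}\|_{2}+\|g_{0}\|_{2})$ could exceed $\Ge$ and the limit pair need not be feasible. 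You close this gap cleanly: adding a common completely monotone function $c\psi$ to both $f_{0}$ and $g_{0}$ leaves $f_{0}-g_{0}$ and $f_{0}(x_{0})-g_{0}(x_{0})$ unchanged while continuously inflating $\|f_{0}\|_{2}+\|g_{0}\|_{2}$, so an intermediate-value argument restores the normalization exactly and produces a feasible optimal pair. This is a genuine improvement in rigor over the printed proof, not merely a stylistic variant; the rest of your argument (the use of $C_{0}([0,\infty))^{*}$ and the test functions $(1+t)e^{-xt}\in C_{0}$, Fatou for the norm bounds) is a sound repackaging of the paper's use of the weighted space $X$.
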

\begin{proof}
Let $\{f_{n},g_{n}\}\subset\mathfrak{C}_{2}$ be a minimizing sequence for
(\ref{fgproblem}). Then sequences
\[
\Tld{f}_{n}=\frac{f_{n}}{\|f_{n}\|_{2}+\|g_{n}\|_{2}},\quad
\Tld{g}_{n}=\frac{g_{n}}{\|f_{n}\|_{2}+\|g_{n}\|_{2}}
\]
are bounded in $L^{2}(0,1)$. By Lemma~\ref{lem:L2star} the corresponding
measures $\Tld{\Gs}_{n}$, $\Tld{\mu}_{n}$ are bounded in $X^{*}$, where
\begin{equation}
  \label{X}
  X=\left\{\Phi\in C([0,\infty)):\lim_{t\to\infty}(1+t)\Phi(t)=0\right\}
\end{equation}
is a Banach space with $\|\Phi\|_{X}=\sup_{t>0}(t+1)|\Phi(t)|$.
Since $X$ is separable, there are weak-* converging
subsequences, not relabeled, $\Tld{\Gs}_{n}\wk{*}\Gs_{0}\in X^{*}$,
$\Tld{\mu}_{n}\wk{*}\mu_{0}\in X^{*}$. Since $e^{-x_{0}t}\in X$, we conclude
that 
$\Tld{f}_{n}(x_{0})-\Tld{g}_{n}(x_{0})\to f_{0}(x_{0})-g_{0}(x_{0})$, where
\[
f_{0}(x)=\int_{0}^{\infty}e^{-xt}d\Gs_{0}(t),\quad g_{0}(x)=\int_{0}^{\infty}e^{-xt}d\mu_{0}(t).
\]
In fact, the pointwise convergence of $\Tld{f}_{n}$ and $\Tld{g}_{n}$ together
with their weak precompactness in $L^{2}(0,1)$ implies that $\Tld{f}_{n}\weak
f_{0}$, and $\Tld{g}_{n}\weak
g_{0}$ in $L^{2}(0,1)$. The weak lower semicontinuity of the norm in
$L^{2}(0,1)$ implies that $\{f_{0},g_{0}\}\subset\CA_{\Ge}$ and therefore
attain the maximum in (\ref{fgproblem}).
\end{proof}


\section{The $\phi$-problem}
\setcounter{equation}{0}
\label{sec:phi}
The goal of this section is to solve the $\phi$-problem (\ref{phiproblem}).

\subsection{Reduction to an integral equation}
\label{sub:intreduct}
The $\phi$-problem (\ref{phiproblem}) asks to maximize a linear continuous
functional on the Hilbert space $H$ over a convex and closed subset
$\CA_{\Ge}\subset H$. A new general version of the Kuhn-Tucker theorem, valid
in all locally convex topological vector spaces, is formulated and proved in
Appendix~\ref{app:KT}. In order to apply it, we need to describe the
admissible set of functions $\CA_{\Ge}$ in the standard form (\ref{Kdef}). To
do so, we first observe that
\[
\|\phi_{\Gs}\|=\|\Gs\|_{L^{2}(0,+\infty)}=\sup_{\|\Psi\|_{L^{2}(0,+\infty)}\le 1}\int_{0}^{\infty}\Psi(t)\Gs(t)dt,\qquad
\|\phi\|_{2}=\sup_{\|\psi\|_{2}\le 1}\int_{0}^{1}\phi(x)\psi(x)dx.
\]
Let us show that $L^{2}(0,1)$ acts on $H$ by weakly continuous
functionals, where the action of $\psi\in L^{2}(0,1)$ on $H$ is
defined by 
\[
\phi\mapsto(\phi,\psi)_{2}=\int_{0}^{1}\phi(x)\psi(x)dx.
\]
Indeed, $|(\phi,\psi)_{2}|\le\|\psi\|_{2}\|\phi\|_{2}\le
\sqrt{\pi}\|\psi\|_{2}\|\phi\|$, by Lemma~\ref{lem:HL2}. We also have
\[
(\phi_{\Gs},\psi)_{2}=\int_{0}^{1}\phi_{\Gs}(x)\psi(x)dx=\int_{0}^{\infty}(\GL\psi)(t)\Gs(t)dt,\qquad
(\GL\psi)(t)=\int_{0}^{1}\psi(x)e^{-xt}dx,
\]
while the bound
\[
  |(\phi_{\Gs},\psi)_{2}|\le\sqrt{\pi}\|\psi\|_{2}\|\phi_{\Gs}\|=
  \sqrt{\pi}\|\psi\|_{2}\|\Gs\|_{L^{2}(0,\infty)}
\]
implies
\[
\|\GL\psi\|_{L^{2}(0,\infty)}\le\sqrt{\pi}\|\psi\|_{2}.
\]
Thus, we obtain the desired description of $\CA_{\Ge}$
\[
\CA_{\Ge}=\{\phi_{\Gs}\in H:(\Gs,\Psi)_{L^{2}(0,\infty)}\le 1\ \forall\|\Psi\|_{L^{2}(0,\infty)}\le 1,\
(\Gs,\GL\psi)_{L^{2}(0,\infty)}\le\Ge\ \forall\|\psi\|_{2}\le 1\}.
\]
In order to apply the Kuhn-Tucker theorem, we need to compute the smallest closed convex cone
$\Hat{\CF}\subset H\times\bb{R}$ containing the set
\[
\CF=\{(\Psi,1):\|\Psi\|_{L^{2}(0,\infty)}\le 1\}\cup\{(\GL\psi,\Ge)_{L^{2}(0,\infty)}:\|\psi\|_{2}\le 1\}.
\]
We can characterize $\Hat{\CF}$ as
\[
\Hat{\CF}=\{(\Psi+\GL\psi,A+\Ge B):\|\Psi\|_{L^{2}(0,\infty)}\le A,\
\|\psi\|_{2}\le B,\ A\ge 0,\ B\ge 0\}.
\]
Indeed, it is obvious both that $\Hat{\CF}$ is a convex cone and
that each element of $\Hat{\CF}$ is a nonnegative linear combination of two
elements from $\CF$.
To prove that $\Hat{\CF}$ is closed suppose that 
\[
\Psi_{n}+\GL\psi_{n}\to P\ \text{in }L^{2}(0,\infty),\quad A_{n}+\Ge B_{n}\to\Ga,\quad \|\Psi_{n}\|_{L^{2}(0,\infty)}\le A_{n},\quad
\|\psi_{n}\|_{2}\le B_{n}.
\]
Then $A_{n}\le A_{n}+\Ge B_{n}$ and $B_{n}\le(A_{n}+\Ge B_{n})/\Ge$. Hence, we
can extract convergent subsequences (not relabeled) of $A_{n}\to A$ and
$B_{n}\to B$. We can also extract the weakly convergent subsequences (not
relabeled) $\Psi_{n}\weak\Psi$, $\psi_{n}\weak\psi$. The weak lower
semicontinuity of the norms implies that $\|\Psi\|_{L^{2}(0,\infty)}\le A$,
$\|\psi\|_{2}\le B$, while $A+\Ge B=\Ga$ and $\Psi+\GL\psi=P$. Thus,
$(P,\Ga)\in\Hat{\CF}$, and we conclude that $\Hat{\CF}$ is weakly closed. Now,
according to the Kuhn-Tucker theorem~\ref{th:KT},
\begin{equation}
  \label{KT}
\GD^{x_{0}}_{*}(\Ge)=\max_{\phi\in\CA_{\Ge}}\phi(x_{0})=\min_{\psi\in L^{2}(0,1)}\left(
\Ge\|\psi\|_{2}+\left\|\GL\psi-e^{-x_{0}t}\right\|_{L^{2}(0,\infty)}\right).
\end{equation}
The minimizer $\psi_{\Ge}$ in (\ref{KT}) exists for any fixed $\Ge>0$, because this is a convex
and coercive variational problem. However, this problem is difficult analyze; Hence, we are going to modify the
maximization problem (\ref{KT}) to make it more tractable, while using
our understanding of the relation between solutions of
(\ref{phiproblem}) and (\ref{KT}) to obtain the maximizer in
(\ref{phiproblem}).
Using that  for $1/p+1/q=1$,
\begin{equation}
  \label{pqineq}
\nth{p}\left(\frac{p}{q}a^{2}+b^{2}\right)=\frac{a^{2}}{q}+\frac{b^{2}}{p}\le(a+b)^{2}\le
pa^{2}+qb^{2}=q \left(\frac{p}{q}a^{2}+b^{2}\right),
\end{equation}
we conclude that for the sake of understanding the asymptotic behavior of
$\GD_*^{x_0}(\Ge)$, we can replace the variational problem (\ref{KT}) by a
quadratic one:
\begin{equation}
  \label{quadmin}
Q_{x_{0}}(\Gve)=\min_{\psi\in L^{2}(0,1)}\eps^{2}\|\psi\|_{2}^{2}+\left\|\GL\psi-e^{-x_{0}t}\right\|_{L^{2}(0,\infty)}^{2},
\end{equation}
where $\eps =\Ge\sqrt{p(\Ge)/q(\Ge)}$, and where the parameters $p(\Ge)$,
$q(\Ge)$, satisfying $1/p(\Ge)+1/q(\Ge)=1$ will be chosen later to optimize the
upper bound that, according to (\ref{pqineq}), reads
\begin{equation}
  \label{UB}
\GD^{x_{0}}_{*}(\Ge)^{2}\le q(\Ge)Q_{x_{0}}(\Gve),\qquad\eps =\Ge\sqrt{\frac{p(\Ge)}{q(\Ge)}}.
\end{equation}

The advantage of the quadratic minimization problem (\ref{quadmin}) over
(\ref{KT}) is that the minimizer $\psi_{\eps}$ of (\ref{quadmin}) solves a
linear integral equation
\begin{equation}
 \label{inteq}
 \eps^{2}\psi(x)+(K\psi)(x)=\frac{1}{x_{0}+x},\quad x\in[0,1], 
\end{equation}
where $K:L^{2}(0,1)\to L^{2}(0,1)$,
\[
  (K\psi)(x)=(\GL^{*}\GL\psi)(x)=\int_{0}^{1}\frac{\psi(y)dy}{x+y}
\]
is a bounded, nonnegative, and self-adjoint operator. Hence,
(\ref{inteq}) has a unique solution $\psi_{\Gve}\in L^{2}(0,1)$. 

Representing the kernel $(x+y)^{-1}$ of the integral operator in the form
\[
\nth{x+y}=\int_{0}^{\infty}e^{-xt}e^{-yt}dt,
\]
we conclude that the solution $\psi_{\Gve}$ of (\ref{inteq}) satisfies
\begin{equation}
  \label{phiepsrep}
  \psi_{\eps}(x)=\nth{\eps^{2}}\int_{0}^{\infty}(e^{-x_{0}t}-\GL\psi_{\eps})e^{-xt}dt.
\end{equation}
This shows that $\psi_{\Gve}\in L^{2}(0,1)$ has the unique extension, also
denoted $\psi_{\eps}\in H$, which has a representation (\ref{Hfrep}), with 
$\Gs=\Gve^{-2}(e^{-x_{0}t}-\GL\psi_{\eps})\in L^{2}(0,\infty)$. Therefore, in view of (\ref{Hinpr}), we have
\begin{equation}
  \label{Hnormrep}
  \|\psi_{\eps}\|=\nth{\eps^{2}}\|\GL\psi_{\eps}-e^{-x_{0}t}\|_{L^{2}(0,\infty)}.
\end{equation}

Setting $x=x_{0}$ in (\ref{phiepsrep}), we obtain
\[
\psi_{\eps}(x_{0})=\nth{\eps^{2}}\int_{0}^{\infty}(e^{-x_{0}t}-\GL\psi_{\eps})e^{-x_{0}t}dt.
\]
Multiplying (\ref{phiepsrep}) by $\psi_{\eps}$ and integrating over
$[0,1]$, we get
\[
\|\psi_{\eps}\|_{2}^{2}=\nth{\eps^{2}}\int_{0}^{\infty}(e^{-x_{0}t}-\GL\psi_{\eps})\GL\psi_{\eps}dt.
\]
Subtracting the two equations and taking (\ref{Hnormrep}) into account yields
\begin{equation}
  \label{Pythagoras}
\|\psi_{\eps}\|_{2}^{2}+\eps^{2} \|\psi_{\eps} \|^{2}=\psi_{\eps}(x_{0}).
\end{equation}
This relation implies that $Q_{x_0}(\Gve)=\eps^{2}\psi_{\eps}(x_{0})$, while the upper
bound (\ref{UB}) becomes
\begin{equation}
  \label{maxeval}
\GD_*^{x_0}(\Ge)^{2}\le q(\Ge)\eps^{2}\psi_{\eps}(x_{0})=\Ge^{2}p(\Ge)\psi_{\eps}(x_{0}).
\end{equation}
The lower bound for $\GD_*^{x_0}(\Ge)$ is obtained by using a test
function
\begin{equation}
  \label{phipsi}
  \phi_{\Ge}=\frac{\Ge \psi_{\eps}}{\|\psi_{\eps}\|_2}\in H,
\end{equation}
which obviously satisfies $\|\phi_{\Ge}\|_2=\Ge$, and where $p(\Ge)$ is chosen
so that $\|\phi_{\Ge}\|=1$. Specifically, using (\ref{Pythagoras}), we have
\[
\|\phi_{\Ge}\|^2 = \frac{\Ge^2\| \psi_{\eps}\|^{2}}{\|\psi_{\eps}\|_{2}^{2}}
=\frac{q(\Ge)}{p(\Ge)}\left(\frac{\psi_{\eps}(x_0)}{\|\psi_{\eps}\|_{2}^{2}}-1\right)=
\frac{\frac{\psi_{\eps}(x_0)}{\|\psi_{\eps}\|_{2}^{2}}-1}{p(\Ge)-1}.
\]
Setting $\|\phi_{\Ge}\|^{2}=1$, we obtain
\begin{equation}
  \label{pofeps}
  p(\Ge)=\frac{\psi_{\eps}(x_0)}{\|\psi_{\eps}\|_{2}^{2}}=1+\frac{\Gve^{2}\|\psi_{\Gve}\|^{2}}{\|\psi_{\Gve}\|_{2}^{2}}\in(1,+\infty),
\end{equation}
due to (\ref{Pythagoras}). The choice (\ref{pofeps}) of $p(\Ge)$ implies that
$\phi_{\Ge}\in\CA_{\Ge}$, yielding the lower bound for $\GD_*^{x_0}(\Ge)$
\[
(\GD_*^{x_0}(\Ge))^{2}\ge(\phi_{\Ge}(x_0))^2=
\frac{\Ge^{2}\psi_{\eps}(x_{0})^{2}}{\|\psi_{\eps}\|_2^{2}}=\Ge^{2}p(\Ge)\psi_{\eps}(x_{0}),
\]
provided $p(\Ge)$ is given by (\ref{pofeps}). Hence, the lower bound for
$\GD_*^{x_0}(\Ge)$ agrees with the upper bound (\ref{UB}), and therefore,
\begin{equation}
  \label{exactasymp}
 \GD_*^{x_0}(\Ge)=\frac{\Ge\psi_{\eps}(x_{0})}{\|\psi_{\eps}\|_2},
\end{equation}
where $\psi_{\eps}$ solves (\ref{inteq}) and $\Gve$ and $\Ge$ are related by
\begin{equation}
  \label{GeGve}
  \Ge=\frac{\|\psi_{\Gve}\|_{2}}{\|\psi_{\Gve}\|},
\end{equation}
which is easy to obtain combining (\ref{pofeps}) and the formula for $\Gve$
from (\ref{UB}). Substituting this into (\ref{exactasymp}), we also obtain
\begin{equation}
  \label{exactasymp0}
 \GD_*^{x_0}(\Ge)=\frac{\psi_{\eps}(x_{0})}{\|\psi_{\eps}\|}.
\end{equation}
We can use formulas (\ref{GeGve}) and (\ref{exactasymp0}) to establish the
explicit leading order asymptotics of $\GD_*^{x_0}(\Ge)$, if we can compute
the explicit leading order asymptotics of the \rhs s in (\ref{GeGve}) and
(\ref{exactasymp0}). Specifically, if $E_{0}(\Gve)$ and $E_{1}(\Gve)$ are
continuous and monotone increasing functions on $[0,1)$, such that
$E_{0}(0)=E_{1}(0)=0$, and
\[
\lim_{\Gve\to 0^{+}}\frac{\psi_{\eps}(x_{0})}{E_{0}(\Gve)\|\psi_{\eps}\|}=1,\qquad
\lim_{\Gve\to 0^{+}}\frac{\|\psi_{\Gve}\|_{2}}{E_{1}(\Gve)\|\psi_{\Gve}\|}=1,
\]
then we want to conclude that
\begin{equation}
  \label{exactasymp1}
  \lim_{\Ge\to 0^{+}}\frac{\GD_*^{x_0}(\Ge)}{E_{0}(E_{1}^{-1}(\Ge))}=1.
\end{equation}
Since $\Ge(\Gve)\sim E_{1}(\Gve)$, then the assumed properties of
$E_{1}(\Gve)$ imply that $\Ge\to 0^{+}$ \IFF $\Gve\to 0^{+}$. Then,
\[
\lim_{\Ge\to 0^{+}}\frac{\GD_*^{x_0}(\Ge)}{E_{0}(E_{1}^{-1}(\Ge))}=
\lim_{\Gve\to 0^{+}}\frac{E_{0}(\Gve)\frac{\psi_{\eps}(x_{0})}{E_{0}(\Gve)\|\psi_{\eps}\|}}
{E_{0}\left(E_{1}^{-1}\left(E_{1}(\Gve)\frac{\|\psi_{\Gve}\|_{2}}{E_{1}(\Gve)\|\psi_{\Gve}\|}\right)\right)}.
\]
Thus, (\ref{exactasymp1}) follows, if functions $E_{0}$ and $E_{1}$ have the
additional property
\begin{equation}
  \label{E0E1prop}
 \lim_{\Gve\to 0^{+}}\frac{E_{0}(\Gve)}{E_{0}(E_{1}^{-1}(E_{1}(\Gve)r(\Gve)))}=1,
\end{equation}
whenever $r(\Gve)\to 1$, as $\Gve\to 0^{+}$. It is not difficult to give an
example of continuous and monotone increasing functions $E_{0}$ and $E_{1}$,
with $E_{0}(0)=E_{1}(0)=0$ that fail to satisfy (\ref{E0E1prop}).

\subsection{Solution of the integral equation}
To solve the integral equation (\ref{inteq}), we diagonalize the
bounded self-adjoint operator $K$. The problem of computing the eigenfunctions of $K$ can be
related to a problem about the truncated Hilbert transform
\[
(H_{1}u)(\xi)=P.V.\int_{0}^{1}\frac{u(y)dy}{\xi-y},
\]
regarded as a map $H_{1}:L^{2}(0,1)\to L^{2}(-1,0)$, that has been solved in \cite{kato16}. 
The relation between the operators $K$ and $H_{1}$ is expressed by the
formula $K^{2}=H_{1}^{*}H_{1}$, which shows that if $u$ is an eigenfunction of $K$ with
eigenvalue $\nu>0$, then $u$ is also a singular function of $H_{1}$ with
singular value $\nu$. Conversely, if $u$ is a singular function of $H_{1}$ with
singular value $\nu$, then $K^{2}u=\nu^{2}u$, which implies that
$(K+\nu)(K-\nu)u=0$. Since $K$ is a bounded nonnegative operator, the
operator $K+\nu$ is invertible and we conclude that $Ku=\nu u$. In 
\cite{kato16} it was shown that the spectrum of $H_{1}^{*}H_{1}$ is
continuous and its eigenfunctions can be found explicitly by observing that
the differential operator
\[
  (Lu)(x)=-(x^{2}(1-x^{2})u'(x))'+2x^{2}u(x)
\]
commutes with $H_{1}^{*}H_{1}$. We can easily verify that $L$ also
commutes with $K$. That means that if $u$ is an eigenfunction of $L$
corresponding to the eigenvalue $\Gl$, then $\Gl Ku=KLu=LKu$. Hence,
$Ku$ is also an eigenfunction of $L$ with the eigenvalue $\Gl$. As
computed in \cite{kato16}, the
eigenspaces of $L$ are all one-dimensional, spanned by 
\begin{equation}
  \label{uofx}
  u(x;\mu)=x^{-\hf+i\mu}F\left(\left[\nth{4}+\frac{i\mu}{2},\frac{3}{4}+\frac{i\mu}{2}\right],
[1];1-x^{2}\right),\quad\Gl=\mu^{2}+\nth{4},\ \mu\ge 0,
\end{equation}
where $F([a,b],[c];z)$ is the Gauss hypergeometric function. We conclude
that functions $u(x;\mu)$ are eigenfunctions of $K$. The corresponding
eigenvalues, are the singular values of $H_{1}$, which, according to
\cite{kato16}, are given by
\begin{equation}
  \label{numu}
  \nu(\mu)=\frac{\pi}{\cosh(\pi\mu)}. 
\end{equation}
We note that the function $z\mapsto F([a,b],[c];z)$ is analytic in
$\bb{C}\setminus[1,+\infty)$. Therefore, $u(x;\mu)$, given by (\ref{uofx}), is analytic in the
complex right half-plane.  
The orthogonality of the eigenfunctions is conveniently expressed
in terms of the ``$u$-transform'' and its inverse (see \cite{kato16}):
\begin{equation}
  \label{utransform}
  \hat{f}(\mu)=\int_{0}^{1}f(x)u(x;\mu)dx,\qquad
f(x)=\int_{0}^{\infty}\hat{f}(\mu)u(x;\mu)\mu\tanh(\pi\mu)d\mu.
\end{equation}
Multiplying the second equation by $f(x)$ and integrating gives the
generalized Plancherel formula
\begin{equation}
  \label{unorm}
  \|f\|_{2}^{2}=\int_{0}^{\infty}|\hat{f}(\mu)|^{2}\mu\tanh(\pi\mu)d\mu.
\end{equation}

The knowledge of the eigenfunctions of $K$ permits us to solve the integral equation (\ref{inteq}):
\begin{equation}
  \label{phirep}
  \psi_{\Gve}(x)=\int_{0}^{\infty}\frac{u(x_{0};\mu)u(x;\mu)\mu\tanh(\pi\mu)}{2\hat{\Gve}^{2}\cosh(\pi\mu)+1}d\mu,\quad\hat{\Gve}=\frac{\Gve}{\sqrt{2\pi}}.
\end{equation}
Moreover,
\begin{equation}
  \label{phinorm}
  \|\psi_{\Gve}\|_{2}^{2}=\int_{0}^{\infty}\frac{u(x_{0};\mu)^{2}\mu\tanh(\pi\mu)}{(2\hat{\Gve}^{2}\cosh(\pi\mu)+1)^{2}}d\mu, 
\end{equation}
while
\begin{equation}
  \label{phix0}
  \psi_{\Gve}(x_{0})=\int_{0}^{\infty}\frac{u(x_{0};\mu)^{2}\mu\tanh(\pi\mu)}{2\hat{\Gve}^{2}\cosh(\pi\mu)+1}d\mu.
\end{equation}
Substituting (\ref{phinorm}) and (\ref{phix0}) into (\ref{Pythagoras}) gives
\begin{equation}
  \label{phiHnorm}
  \|\psi_{\Gve}\|^{2}=\nth{\pi}\int_{0}^{\infty} \frac{u(x_{0};\mu)^{2}\mu\sinh(\pi \mu)}{(2\hat{\Gve}^{2}\cosh(\pi\mu)+1)^{2}} d\mu. 
\end{equation}
When $x=x_{0}>1$ the coefficient $-x^{2}(1-x^{2})$ in the differential
operator $L$ becomes positive, and we expect the
eigenfunctions $u(x_{0};\mu)$ to grow
exponentially as $\mu\to\infty$. Thus, if we set $\Gve=0$ in (\ref{phinorm})
and (\ref{phix0}), we obtain exponentially divergent integrals, while they
remain convergent for each $\Gve>0$. Thus, $\|\psi_{\Gve}\|_{2}\to\infty$ and
$\psi_{\Gve}(x_{0})\to\infty$, as $\Gve\to 0$, and the precise asymptotics of these
quantities, as $\Gve\to 0$, would depend on the rate of exponential growth of
$u(x_{0};\mu)$, as $\mu\to\infty$.

\subsection{Asymptotics of $\GD_*^{x_0}(\Ge)$}
In this section the notation $A(\Ge)\sim B(\Ge)$ means  $A(\Ge)/B(\Ge)\to 1$,
as $\Ge\to 0^{+}$. Similarly, $A(\mu)\sim B(\mu)$ means  $A(\mu)/B(\mu)\to 1$,
as $\mu\to+\infty$. The goal of this section is to compute the following
explicit asymptotics\footnote{For our purposes, we only need the exponent. We
  derive the explicit formula for $C_{*}(x_{0})$ because we can, and because
  the technique we use may be of broader interest.} of $\GD_{*}^{x_{0}}(\Ge)$.
\begin{theorem}
  \label{th:GDstar}
  \begin{equation}
    \label{DGstar}
    \GD_{*}^{x_{0}}(\Ge)\sim
    \begin{cases}
      C_{*}(x_{0})\Ge^{\frac{2}{\pi}\arcsin\left(\frac{1}{x_{0}}\right)},&x_{0}>1,\\
      \frac{\sqrt{2}}{\pi}\Ge|\ln\Ge|,&x_{0}=1,
    \end{cases}
\end{equation}
where
\begin{equation}
  \label{Cstar}
  C_{*}(x_{0})=\hf\sqrt{\frac{x_{0}}{2(x_{0}^{2}-1)\arcsin\left(\frac{1}{x_{0}}\right)}}
\left(\frac{2\pi\arcsin\left(\frac{1}{x_{0}}\right)}{\arccos\left(\frac{1}{x_{0}}\right)}\right)^{\frac{\arccos\left(\frac{1}{x_{0}}\right)}{\pi}}.
  \end{equation}
\end{theorem}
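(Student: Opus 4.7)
The plan is to extract the asymptotics of $\GD_*^{x_0}(\Ge)$ from the exact identities (\ref{exactasymp0}) and (\ref{GeGve}) by evaluating the three spectral integrals (\ref{phix0}), (\ref{phinorm}), (\ref{phiHnorm}) to leading order as $\Gve\to 0^+$. The whole argument hinges on one input: for $x_0>1$, the leading-order asymptotic
\begin{equation*}
u(x_0;\mu)\sim A(x_0)\,\mu^{-1/2}\,e^{\mu\arccos(1/x_0)},\qquad\mu\to+\infty,
\end{equation*}
with an explicit constant $A(x_0)$. Applying the Pfaff transformation to (\ref{uofx}) one obtains
\begin{equation*}
u(x_0;\mu)=\frac{1}{x_0}\,F\!\left(\left[\tfrac{1}{4}+\tfrac{i\mu}{2},\tfrac{1}{4}-\tfrac{i\mu}{2}\right]\!,[1];1-\tfrac{1}{x_0^2}\right),
\end{equation*}
a real-valued hypergeometric function with complex-conjugate large parameters evaluated at a fixed argument in $(0,1)$; a saddle-point analysis of its Euler integral representation produces the claimed exponential growth, with the rate $\arccos(1/x_0)$ emerging as the saddle-point phase. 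Extracting the explicit prefactor $A(x_0)$ from this analysis is the main technical obstacle.

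Next, introduce the critical scale $\mu_*=\mu_*(\Gve)$ via $2\hat{\Gve}^2\cosh(\pi\mu_*)=1$, so that $\mu_*=\tfrac{2}{\pi}\ln(1/\Gve)+O(1)$. After the substitution $\mu=\mu_*+\tau/\pi$ the denominator $2\hat{\Gve}^2\cosh(\pi\mu)+1$ becomes $e^\tau+1$ on the $\tau$-scale, so each integrand localizes and reduces to an explicit integral of the form $\int_{-\infty}^\infty e^{a\tau}/(e^\tau+1)^k\,d\tau$, evaluable by the beta function. Writing $\beta=\arccos(1/x_0)$ and $\alpha=\arcsin(1/x_0)$, this yields
\begin{equation*}
\psi_\Gve(x_0)\sim C_1\,\Gve^{-4\beta/\pi},\quad\|\psi_\Gve\|_2^2\sim C_2\,\Gve^{-4\beta/\pi},\quad\|\psi_\Gve\|^2\sim C_3\,\Gve^{-4\beta/\pi-2},
\end{equation*}
with explicit $C_i=C_i(x_0)$ satisfying $C_2=(2\alpha/\pi)C_1$ and $C_3=(2\beta/\pi)C_1$, so that (\ref{Pythagoras}) reduces to the free consistency check $C_1=C_2+C_3$.

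Finally, (\ref{GeGve}) gives $\Ge\sim\sqrt{C_2/C_3}\,\Gve$, so $\Gve$ and $\Ge$ are linearly related to leading order, and (\ref{exactasymp0}) then yields
\begin{equation*}
\GD_*^{x_0}(\Ge)\sim\sqrt{C_1}\,(\pi/(2\beta))^{\beta/\pi}(\pi/(2\alpha))^{\alpha/\pi}\,\Ge^{2\alpha/\pi}
\end{equation*}
upon using $\alpha+\beta=\pi/2$. The exponent $\tfrac{2}{\pi}\arcsin(1/x_0)$ is thus immediate from only the exponential rate $\arccos(1/x_0)$; producing the prefactor (\ref{Cstar}) requires the explicit value of $A(x_0)$ (and the identity $\sin(2\beta)=2\sqrt{x_0^2-1}/x_0^2$ which enters $\sqrt{C_1}$). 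The endpoint $x_0=1$ is handled separately: since $u(1;\mu)\equiv 1$, the exponential growth disappears, an extra factor of $\mu_*$ survives in the asymptotics, and the $|\ln\Ge|$ correction in (\ref{DGstar}) follows directly. The main difficulty throughout is extracting enough precision from the hypergeometric asymptotic to recover $C_*(x_0)$ exactly; everything else is bookkeeping.
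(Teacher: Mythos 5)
Your proposal follows essentially the same route as the paper's proof: obtain the large-$\mu$ asymptotics of $u(x_0;\mu)$, rescale $\mu$ so that the denominator $2\hat{\Gve}^2\cosh(\pi\mu)+1$ becomes $e^{\tau}+1$, evaluate the resulting beta-function integrals, and close with the exact identities (\ref{Pythagoras}), (\ref{GeGve}), (\ref{exactasymp0}). Your bookkeeping is correct — the relations $C_2=(2\alpha/\pi)C_1$, $C_3=(2\beta/\pi)C_1$, the consistency check $C_1=C_2+C_3$, and the final prefactor $\sqrt{C_1}(\pi/(2\beta))^{\beta/\pi}(\pi/(2\alpha))^{\alpha/\pi}$ all match the paper's $C_*(x_0)$ once the explicit $C_1=\frac{x_0}{4\pi(x_0^2-1)}(2\pi)^{2\beta/\pi}$ is inserted. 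However, three pieces are missing or glossed.

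First, the entire argument sits on the precise constant in the asymptotic $u(x_0;\mu)\sim\frac{1}{\sqrt{2\pi\mu}}\,x_0^{-1/2}(x_0^2-1)^{-1/4}e^{\mu\arccos(1/x_0)}$ (Lemma~\ref{lem:ux0asym}), and you explicitly acknowledge you haven't produced it. You propose a saddle-point analysis of the Euler integral after the Pfaff transformation; that could in principle replace the paper's appeal to the Khwaja--Daalhuis uniform expansions (Appendix~\ref{app:HG}), but until the saddle-point computation is carried out to the constant level — including verifying there is a single dominant saddle, computing the Hessian contribution, and controlling the error uniformly — the prefactor (\ref{Cstar}) is not actually derived. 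Without it you only get the exponent, which is the easy half of the theorem.

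Second, passing from ``$\psi_\Gve(x_0)/\|\psi_\Gve\|\sim E_0(\Gve)$, $\|\psi_\Gve\|_2/\|\psi_\Gve\|\sim E_1(\Gve)$'' to ``$\GD_*^{x_0}(\Ge)\sim E_0(E_1^{-1}(\Ge))$'' is not automatic: the paper proves the auxiliary stability property (\ref{E0E1prop}) before invoking (\ref{exactasymp1}). For $x_0>1$ this is a one-line check because $E_1$ is linear and $E_0$ is a pure power; for $x_0=1$ the check is genuinely tedious (see the $\tilde r(\Gve)$, $\rho(\Gve)$ manipulations in the paper). Your proposal treats the substitution $\Gve\mapsto\Ge$ as free, which papers over a point the authors felt needed an explicit argument.

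Third, the $x_0=1$ case is oversimplified. Your remark that ``an extra factor of $\mu_*$ survives'' misses the structure: $\psi_\Gve(1)$ and $\|\psi_\Gve\|_2^2$ each grow like $(\ln\hat{\Gve})^2$, while $\Gve^2\|\psi_\Gve\|^2$ grows only like $|\ln\hat{\Gve}|$, so the three quantities in (\ref{Pythagoras}) are no longer comparable and a naive rescaling does not localize — the dominant contribution comes from the whole range $\mu'\in(2\ln\hat{\Gve},0)$, not from a neighborhood of the turning point. This is precisely why the paper handles $x_0=1$ with a separate Lemma~\ref{lem:W} and an explicit L'H\^opital computation (Theorem~\ref{th:psi1}) rather than by reusing Lemma~\ref{lem:intasymp}. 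The $\Ge|\ln\Ge|$ form and the constant $\sqrt{2}/\pi$ do drop out, but only after this extra care.
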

Formula (\ref{exactasymp1}) expresses the asymptotics of $\GD_*^{x_0}(\Ge)$ in
terms of the asymptotics of $\|\psi_{\Gve}\|_{2}$, $\psi_{\Gve}(x_0)$, and
$\|\psi_{\Gve}\|$, given by (\ref{phinorm}), (\ref{phix0}), and
(\ref{phiHnorm}), respectively. In turn, these depend on the asymptotics of
$u(x_{0};\mu)$, as $\mu\to\infty$. The following lemma gives the asymptotics
of $u(z;\mu)$, as $\mu\to\infty$ for all $z$ in the complex right half-plane,
excluding the interval $[0,1]$. While in this section we will only need the
asymptotics of $u(z;\mu)$ for real $z>1$, the asymptotics for other values of
$z$ will also be required later on.
\begin{lemma}
  \label{lem:ux0asym}
Let $u(x;\mu)$ be the eigenfunctions of the integral operator $K$. Then formula
(\ref{uofx}) gives the analytic extension of $u(x;\mu)$ from $[0,1]$ to the complex
right half-plane. Moreover,
\begin{equation}
  \label{uofx0}
u(z;\mu)\sim
R(z)\frac{e^{\mu\Ga(z)}}{\sqrt{2\pi\mu}},\text{ as }\mu\to\infty,
\end{equation}
for every
$z\in\GO=\{z\in\bb{C}:\Re z>0,\ z\not\in[0,1]\}$, where
\begin{equation}
  \label{Ralpha}
  R(z)=z^{-1/2}(z^{2}-1)^{-1/4},\qquad\Ga(z)=\arccos\left(\nth{z}\right)
  =i\ln\left(\frac{1-i\sqrt{z^{2}-1}}{z}\right).
\end{equation}
and where the principal branches of the natural logarithm and all
fractional powers are used.
\end{lemma}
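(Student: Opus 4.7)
The plan splits naturally into two parts.

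\textbf{Analytic extension.} The Gauss hypergeometric function $F([a,b],[c];w)$ is single-valued and analytic on $\bb{C}\setminus[1,\infty)$. The map $z\mapsto 1-z^{2}$ sends the right half-plane $\{\Re z>0\}$ into this domain — the requirement $1-z^{2}\in[1,\infty)$ forces $z^{2}\in(-\infty,0]$, i.e., $z$ purely imaginary, which is excluded. Combined with analyticity of the principal branch of $z^{-1/2+i\mu}$ on $\{\Re z>0\}$, formula (\ref{uofx}) therefore defines an analytic function on all of $\{\Re z>0\}$, agreeing with $u(\cdot;\mu)$ on $(0,1)$ by construction.

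\textbf{Asymptotic analysis.} I would first apply Pfaff's transformation to rewrite
\begin{equation*}
u(z;\mu)=z^{-1}F\bigl([\tfrac14+\tfrac{i\mu}{2},\tfrac14-\tfrac{i\mu}{2}],[1];1-z^{-2}\bigr),
\end{equation*}
exhibiting a hypergeometric with complex-conjugate parameters $a=\tfrac14+\tfrac{i\mu}{2}$ and $\bar a$. For real $z>1$, the argument $w=1-z^{-2}\in(0,1)$ is real and the Taylor series
\[ F([a,\bar a],[1];w)=\sum_{n\ge 0}T_{n},\qquad T_{n}=\frac{|(a)_{n}|^{2}}{(n!)^{2}}w^{n}, \]
has all positive terms, so Laplace's method for sums applies directly. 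Setting $\partial_{n}\ln T_{n}=0$ locates the peak at $n_{0}=(\mu/2)\sqrt{z^{2}-1}$; Stirling's expansion of $\Gamma(a+n_{0})$ and $\Gamma(a)$ (with the subleading $O(\mu^{-1})$ corrections in $\arg(a+n)$ and $\ln|a+n|$ conspiring to cancel all finite constants) yields
\[ T_{n_{0}}\sim\frac{e^{\mu\arccos(1/z)}}{\pi\mu\, z^{1/2}(z^{2}-1)^{1/2}}, \]
and the Gaussian width determined by $\partial_{n}^{2}\ln T_{n}|_{n_{0}}=-4/(\mu z^{2}\sqrt{z^{2}-1})$ contributes a factor $\sqrt{\pi\mu/2}\,z(z^{2}-1)^{1/4}$, so that $F\sim z^{1/2}(z^{2}-1)^{-1/4}e^{\mu\arccos(1/z)}/\sqrt{2\pi\mu}$. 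Multiplying by $z^{-1}$ recovers (\ref{uofx0}) with $R(z)=z^{-1/2}(z^{2}-1)^{-1/4}$ for real $z>1$, and analyticity of both sides together with uniform bounds on compact subsets extends the asymptotic to $z\in\Omega$ (Vitali).

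\textbf{Main obstacle.} The most delicate step is the Stirling bookkeeping that confirms the clean prefactor $1/\sqrt{2\pi\mu}$: the subleading corrections $\arg(a+n_{0})=\arcsin(1/z)-1/(2\mu z^{2})+O(\mu^{-2})$ and $\arg a=\pi/2-1/(2\mu)+O(\mu^{-3})$ combine with analogous corrections from $\ln|a+n_{0}|$ to cancel all finite constants that would otherwise contaminate the amplitude. A secondary issue is justifying the Laplace-sum approximation rigorously (Euler--Maclaurin tail control) and extending from real $z>1$ to complex $z\in\Omega$ via steepest-descent on the analytic continuation of $n\mapsto T_{n}$. An alternative route via WKB for the ODE $Lu=(\mu^{2}+\tfrac14)u$ immediately yields the eikonal $S(z)=\arccos(1/z)$ and amplitude $R(z)$ from the transport equation but shifts the remaining work to Airy-function matching at the turning point $z=1$, which is what pins down the $1/\sqrt{2\pi\mu}$.
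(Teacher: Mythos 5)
Your proposal shares the paper's opening move (Pfaff's transformation to $u(z;\mu)=z^{-1}F([\tfrac14+\tfrac{i\mu}{2},\tfrac14-\tfrac{i\mu}{2}],[1];1-z^{-2})$), and your analytic-continuation argument for the first claim is essentially the paper's as well. After that the routes diverge: the paper invokes a published uniform asymptotic expansion of the hypergeometric function with two large complex-conjugate parameters (Khwaja--Olde Daalhuis, Theorem 3.2), which is stated directly for complex argument in the domain $\widehat\Omega=\mathbb{C}\setminus\{w\in\mathbb{R}:w(w-1)\ge 0\}$ and produces the answer (after some branch-tracking and simplification of the $K_{\pm 1/2}$ Bessel factors) for all $z\in\Omega$ at once, with an explicit error bound. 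You instead derive the asymptotics from scratch by Laplace's method on the positive series for real $z>1$, and your saddle location $n_0=\tfrac{\mu}{2}\sqrt{z^2-1}$, second-derivative $-4/(\mu z^2\sqrt{z^2-1})$, and Gaussian prefactor all reproduce $R(z)e^{\mu\alpha(z)}/\sqrt{2\pi\mu}$ correctly. The trade-off is that your route is more elementary and self-contained, whereas the paper buys rigor and full complex validity in one stroke by citing a specialist reference.

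However, there is a genuine gap, and it is not where you put the emphasis. The Stirling bookkeeping is delicate but mechanical; the real problem is the extension from $(1,\infty)$ to $\Omega$ via Vitali. Vitali (or Montel plus uniqueness of limits) requires the family $\mu\mapsto \sqrt{2\pi\mu}\,u(z;\mu)e^{-\mu\alpha(z)}/R(z)$ to be uniformly bounded on compact subsets of $\Omega$, and you have not produced such a bound. The obvious a priori control on $u(z;\mu)$ coming from the Euler integral (\ref{Euler}) is $|u(z;\mu)|\le C(z)\,e^{\pi\mu}$ (the factor $|\sin(\tfrac{3\pi}{4}+\tfrac{i\pi\mu}{2})|$ already grows like $e^{\pi\mu/2}$ and the integrand contributes up to another $e^{\pi\mu/2}$), while the target size is $e^{\mu\Re\alpha(z)}$ with $\Re\alpha(z)<\pi/2$ by Lemma~\ref{lem:expbnd}. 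Thus the normalized family is a priori only $O(\sqrt{\mu}\,e^{\mu(\pi-\Re\alpha(z))})$, which diverges, and Vitali cannot be invoked without first sharpening the bound — which is essentially the same work as doing the complex saddle-point analysis you were hoping to avoid. This is precisely the part the cited theorem handles for free: it is formulated for complex $z$ and comes with a uniform error term $\Phi_1(\mu,\xi)$. Your alternative WKB route (with Airy matching at the turning point $z=1$) would also cover complex $z$ in a Stokes region directly and would close this gap, but as you note it offloads the prefactor to the connection problem, which is a comparable amount of work to the saddle-point version.
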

The proof is a straightforward application of the asymptotic formulas
for the Gauss hypergeometric function from \cite{khda14}. The required
calculations needed to apply these formulas to our specific case are
detailed in Appendix~\ref{app:HG}. We also remark $u(1;\mu)=1$ and
that the asymptotics of $u(x;\mu)$ for $x\in[0,1)$ is given
in \cite[formula (4.34)]{kato16}.

The exponential growth of $u(z;\mu)$ as $\mu\to\infty$, described by
Lemma~\ref{lem:ux0asym} permits us to compute the explicit asymptotics
of $\psi_{\Gve}(z)$, $\|\psi_{\eps}\|_{2}$ and $\|\psi_{\eps}\|$,
given by (\ref{phirep}), (\ref{phinorm}), and (\ref{phiHnorm}),
respectively. This is made possible by the following lemma.
\begin{lemma}
\label{lem:intasymp}
  Suppose that $v\in C([0,\infty))$ is such
  that $v(\mu)\to 1$, as $\mu\to+\infty$, $k\in\{1,2\}$, and
  $\Re\Gb\in(0,k)$. Then
  \begin{equation}
    \label{intasymp}
    \int_{0}^{\infty}\frac{e^{\pi\Gb\mu}v(\mu)d\mu}{(2\hat{\eps}^{2}\cosh(\pi\mu)+1)^{k}}\sim
    \frac{(1-\beta)^{k-1}\hat{\eps}^{-2\Gb}}{\sin(\pi\beta)}\text{ as }\hat{\eps}\to 0^{+}. 
  \end{equation}
\end{lemma}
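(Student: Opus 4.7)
The plan is to locate the transition layer where $2\hat\eps^{2}\cosh(\pi\mu)\approx 1$, rescale so that this layer stays at a fixed position, and reduce the asymptotics to a single explicit limit integral evaluated by the Beta function. Let $\mu_{*}(\hat\eps)=(2/\pi)\ln(1/\hat\eps)$, so that $e^{\pi\mu_{*}}=\hat\eps^{-2}$ and $\mu_{*}\to+\infty$ as $\hat\eps\to 0^{+}$. Under the substitution $\mu=\mu_{*}+\tau/\pi$, one computes $2\hat\eps^{2}\cosh(\pi\mu)+1=e^{\tau}+\hat\eps^{4}e^{-\tau}+1$ and $e^{\pi\beta\mu}=\hat\eps^{-2\beta}e^{\beta\tau}$, turning the left-hand side of (\ref{intasymp}) into
\[
\frac{\hat\eps^{-2\beta}}{\pi}\int_{2\ln\hat\eps}^{\infty}\frac{e^{\beta\tau}\,v(\mu_{*}+\tau/\pi)\,d\tau}{(e^{\tau}+\hat\eps^{4}e^{-\tau}+1)^{k}}.
\]

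The next step is to pass to the limit under the integral sign via dominated convergence. For each fixed $\tau\in\bb{R}$, the hypothesis $v(\mu)\to 1$ forces $v(\mu_{*}+\tau/\pi)\to 1$ and $\hat\eps^{4}e^{-\tau}\to 0$, so the rescaled integrand converges pointwise to $F(\tau)=e^{\beta\tau}/(e^{\tau}+1)^{k}$. Because $v$ is continuous with limit $1$ at infinity, it is bounded by some $M$ on $[0,\infty)$, and one obtains an $\hat\eps$-independent integrable majorant by splitting at $\tau=0$: for $\tau\ge 0$ the denominator dominates $e^{k\tau}$, giving integrand bound $Me^{(\Re\beta-k)\tau}$, while for $\tau\le 0$ the denominator is at least $1$, giving bound $Me^{\Re\beta\tau}$. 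Both are integrable precisely because $\Re\beta\in(0,k)$; extending the rescaled integrand by zero on $(-\infty,2\ln\hat\eps)$ makes dominated convergence applicable.

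The evaluation of the limit integral is then the classical Beta identity: the substitution $u=e^{\tau}/(1+e^{\tau})$ yields
\[
\int_{-\infty}^{\infty}\frac{e^{\beta\tau}\,d\tau}{(e^{\tau}+1)^{k}}=\int_{0}^{1}u^{\beta-1}(1-u)^{k-1-\beta}\,du=B(\beta,k-\beta),
\]
and the reflection identity $\Gamma(\beta)\Gamma(1-\beta)=\pi/\sin(\pi\beta)$ together with $\Gamma(2)=1$ and $\Gamma(2-\beta)=(1-\beta)\Gamma(1-\beta)$ gives $B(\beta,k-\beta)=(1-\beta)^{k-1}\pi/\sin(\pi\beta)$ for both $k=1$ and $k=2$. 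Multiplying by $\hat\eps^{-2\beta}/\pi$ produces the stated asymptotic (\ref{intasymp}).

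The only delicate point is the uniform control of the rescaled integrand near the moving lower endpoint $\tau=2\ln\hat\eps\to-\infty$; but the bound $Me^{\Re\beta\tau}$ there is independent of $\hat\eps$ and integrable on $(-\infty,0]$, so no further adjustment is needed. Everything else is bookkeeping on the change of variables and a standard Beta function evaluation.
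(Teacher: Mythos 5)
Your proof is correct and follows essentially the same route as the paper's: the substitution $\mu=\mu_{*}+\tau/\pi$ with $e^{\pi\mu_{*}}=\hat\eps^{-2}$ is identical to the paper's $\mu'=\pi\mu+2\ln\hat\eps$, and both invoke dominated convergence followed by the Beta--reflection evaluation of $\int_{\bb{R}}e^{\beta\tau}(e^\tau+1)^{-k}\,d\tau$. You supply the explicit $\hat\eps$-independent majorant and the Beta-function computation, which the paper leaves implicit, but the argument is the same.
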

\begin{proof}
Changing the variable of integration $\mu'=\pi\mu+2\ln\hat{\Gve}$, we obtain
$$
\int_{0}^{\infty}\frac{e^{\pi\Gb\mu}v(\mu)d\mu}{(2\hat{\eps}^{2}\cosh(\pi\mu)+1)^{k}}= 
\frac{\hat{\eps}^{-2\Gb}}{\pi}\int_{2\ln\hat{\eps}}^{\infty}\frac{e^{\Gb\mu'}
v\left(\frac{\mu'}{\pi}-\frac{2}{\pi}\ln\hat{\eps}\right)}{(e^{\mu'}+e^{-\mu'+4\ln\hat{\eps}}+1)^{k}}d\mu'.
$$
Since $\Re\Gb\in(0,k)$ and $v(\cdot)$ is a bounded function, the Lebesgue
dominated convergence theorem applies, and we obtain\footnote{The
  formula is correct only for $k=1$ or 2. For general $k\in\bb{N}$, the correct \rhs\ is
    more complicated:
    $\frac{1}{(k-\Gb)B(k,\Gb-k)\sin(\pi(\Gb-k))}$, where
  $B(x,y)$ is the Euler beta function.}
$$
\lim_{\hat{\Gve}\to 0^+}\int_{2\ln\hat{\eps}}^{\infty}\frac{e^{\Gb\mu'}
v\left(\frac{\mu'}{\pi}-\frac{2}{\pi}\ln\hat{\eps}\right)}{(e^{\mu'}+e^{-\mu'+4\ln\hat{\eps}}+1)^{k}}d\mu'
=\int_{\bb{R}}\frac{e^{\Gb\mu'}d\mu'}{(e^{\mu'}+1)^{k}}= \frac{\pi (1 - \beta)^{k-1}}{\sin(\pi \beta)}.
$$
\end{proof}
As a corollary, we obtain the explicit asymptotics of 
$\psi_{\Gve}(z)$, $\|\psi_{\eps}\|_{2}$ and $\|\psi_{\eps}\|$.

\begin{theorem}
\label{th:psiyp}
Let $x_{0}>1$ and $\psi_{\Gve}$ be the solution of the integral equation
(\ref{inteq}). Formula (\ref{phirep}) defines an analytic extension of
$\psi_{\Gve}(x)$ from $[0,1]$ to the complex right half-plane.
Suppose $z\in\GO=\{z\in\bb{C}:\Re z>0,\ z\not\in[0,1]\}$. Then,
\begin{itemize}
\item[(i)] $\displaystyle\psi_{\Gve}(z)\sim\frac{R(x_0)R(z)}{2\pi\sin(\pi\Gb(z))}\hat{\Gve}^{-2\Gb(z)}$,
  where $\displaystyle\Gb(z)=\frac{\Ga(x_{0})+\Ga(z)}{\pi}$, and
  $\displaystyle\hat{\Gve}=\frac{\Gve}{\sqrt{2\pi}}$. 
\item[(ii)] $\displaystyle\|\psi_{\eps}\|_{2}\sim
  \nth{\pi}\sqrt{\frac{x_{0}\arcsin(1/x_{0})}{2(x_{0}^{2}-1)}}\hat{\Gve}^{-\Gb(x_{0})}$;
\item[(iii)] $\displaystyle\Gve\|\psi_{\eps}\|\sim\nth{\pi}\sqrt{\frac{x_0\arccos(1/x_{0})}{2(x_0^2 - 1)}}\hat{\Gve}^{-\Gb(x_{0})}$.
\end{itemize}
\end{theorem}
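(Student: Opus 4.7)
The plan is to feed the pointwise asymptotic $u(z;\mu)\sim R(z)e^{\mu\Ga(z)}/\sqrt{2\pi\mu}$ from Lemma~\ref{lem:ux0asym} into the integral representations (\ref{phirep}), (\ref{phinorm}), and (\ref{phiHnorm}), and then extract the leading order in $\hat{\Gve}$ by invoking Lemma~\ref{lem:intasymp}. Before the asymptotic analysis, I would first establish that (\ref{phirep}) converges uniformly on compact subsets of the right half-plane $\CR$ and thus defines a function in $\CH(\CR)$; this gives the analytic-extension claim. Since $z\mapsto u(z;\mu)$ is itself analytic in $\CR$ (being a Gauss hypergeometric function), Morera's theorem reduces analyticity of $\psi_{\Gve}(z)$ to uniform integrability in $\mu$. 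The latter follows from the pointwise bound in \cite[formula~(4.34)]{kato16} on $[0,1]$ and from Lemma~\ref{lem:ux0asym} on $\GO$, combined with $(2\hat\Gve^{2}\cosh(\pi\mu)+1)^{-1}=O(e^{-\pi\mu})$ and $\Re\Ga(z)\le\pi/2$ throughout $\CR$.

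For part (i), rewrite (\ref{phirep}) as
\[
\psi_{\Gve}(z)=\frac{R(x_{0})R(z)}{2\pi}\int_{0}^{\infty}\frac{e^{\pi\Gb(z)\mu}\,v(\mu;z)\,d\mu}{2\hat{\Gve}^{2}\cosh(\pi\mu)+1},
\]
with
\[
v(\mu;z)=\frac{2\pi\mu\tanh(\pi\mu)\,u(x_{0};\mu)\,u(z;\mu)}{R(x_{0})R(z)\,e^{\pi\Gb(z)\mu}}\;\longrightarrow\;1\quad\text{as }\mu\to+\infty
\]
by Lemma~\ref{lem:ux0asym}. Since $\Re\Ga(w)\in(0,\pi/2)$ for $w\in\GO$, we have $\Re\Gb(z)\in(0,1)$, and Lemma~\ref{lem:intasymp} with $k=1$ yields (i) directly. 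For (ii), applying the same factoring to (\ref{phinorm}) with $k=2$ and $\Gb=\Gb(x_{0})=2\Ga(x_{0})/\pi$ gives
\[
\|\psi_{\Gve}\|_{2}^{2}\sim\frac{R(x_{0})^{2}}{2\pi}\cdot\frac{(1-\Gb(x_{0}))\hat{\Gve}^{-2\Gb(x_{0})}}{\sin(\pi\Gb(x_{0}))},
\]
after which the identities $1-\Gb(x_{0})=\tfrac{2}{\pi}\arcsin(1/x_{0})$, $\sin(\pi\Gb(x_{0}))=\sin(2\arccos(1/x_{0}))=2\sqrt{x_{0}^{2}-1}/x_{0}^{2}$, and $R(x_{0})^{2}=1/(x_{0}\sqrt{x_{0}^{2}-1})$ deliver (ii) upon taking the square root. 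For (iii), the factorization $\sinh(\pi\mu)=\cosh(\pi\mu)\tanh(\pi\mu)$ in (\ref{phiHnorm}) shifts the exponent from $\pi\Gb(x_{0})$ to $\pi(\Gb(x_{0})+1)\in(\pi,2\pi)$; the sign flips $\sin(\pi(\Gb(x_{0})+1))=-\sin(\pi\Gb(x_{0}))$ and $1-(\Gb(x_{0})+1)=-\Gb(x_{0})$ cancel, Lemma~\ref{lem:intasymp} with $k=2$ applies, and multiplying through by $\Gve^{2}=2\pi\hat{\Gve}^{2}$ converts the $\arcsin$ into an $\arccos$, giving (iii).

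The main obstacle is the complex-analytic bookkeeping surrounding (i). In particular, one must verify $\Re\Gb(z)\in(0,1)$ uniformly on $\GO$ by tracking the range of the principal branch of $\arccos(1/z)$ on the slit right half-plane, and confirm that Lemma~\ref{lem:intasymp} applies with a single dominator valid on all of $[0,\infty)$: continuity of $v(\cdot;z)$ together with its limit $1$ at infinity yields boundedness, but one must check that this bound can be taken locally uniform in $z$ so that the asymptotic in (i) is in fact uniform on compact subsets of $\GO$ (as needed for the analytic extension). Once these two technical points are settled, the remainder is routine trigonometric simplification via $\arcsin(1/x_{0})+\arccos(1/x_{0})=\pi/2$ and the double-angle identity.
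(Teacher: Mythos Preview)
Your proposal is correct and follows essentially the same approach as the paper: factor out the leading asymptotic $u_{0}(z;\mu)=R(z)e^{\mu\Ga(z)}/\sqrt{2\pi\mu}$ in (\ref{phirep}), (\ref{phinorm}), (\ref{phiHnorm}), verify via the Euler integral representation and Lemma~\ref{lem:ux0asym} that the residual factor $v$ is continuous with limit $1$, invoke Lemma~\ref{lem:expbnd} to place $\Re\Gb(z)$ in $(0,1)$, and then apply Lemma~\ref{lem:intasymp}. The paper handles (iii) by writing $\mu\sinh(\pi\mu)=\tfrac12 e^{\pi\mu}\cdot\mu\,\tilde v$ with $\tilde v\to1$, which is exactly your $\sinh=\cosh\cdot\tanh$ shift of the exponent by $\pi$; one small clarification is that the analytic extension of $\psi_{\Gve}$ does not require the asymptotic in (i) to be locally uniform in $z$, only that the integral (\ref{phirep}) converges locally uniformly, which already follows from the crude bound $|u(z;\mu)|\le C(z)e^{\pi\mu/2}/\sqrt{\mu}$ and the $e^{-\pi\mu}$ decay of the denominator.
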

\begin{proof}
We begin by ``substituting'' our large $\mu$ asymptotics (\ref{uofx0})
from Lemma~\ref{lem:ux0asym} into formulas (\ref{phirep}),
(\ref{phinorm}), and (\ref{phiHnorm}). We obtain
\[
\psi_{\Gve}(z)=\frac{R(x_0)R(z)}{2\pi}\int_{0}^{\infty}\frac{e^{\pi\beta(z)\mu}v(z;\mu)}{2\hat{\Gve}^2\cosh(\pi \mu) + 1} d\mu,
\]
\[
\|\psi_{\Gve}\|_2^2=\frac{R(x_0)^2}{2\pi}\int_{0}^{\infty}\frac{e^{\pi\beta(x_0)\mu}v(x_0;\mu)}{(2\hat{\Gve}^2\cosh(\pi \mu) + 1)^2} d\mu,
\]
and
\[
\|\psi_{\Gve}\|^2=\frac{R(x_{0})^{2}}{4\pi^{2}}\int_{0}^{\infty}\frac{e^{(1+\Gb(x_{0}))\pi\mu}\Tld{v}(x_{0};\mu)}{(2\hat{\Gve}^2\cosh(\pi \mu) + 1)^2}d\mu,
\]
where
\[
v(z;\mu) = \frac{u(x_0;\mu)}{u_0(x_0;\mu)} \cdot
\frac{u(z;\mu)}{u_0(z;\mu)}\tanh(\pi \mu),\quad
\Tld{v}(z;\mu)=2v(z;\mu)e^{-\pi\mu}\cosh(\pi\mu),
\]
and
\begin{equation}
  \label{u0zmu}
  u_{0}(z;\mu)=R(z)\frac{e^{\mu\Ga(z)}}{\sqrt{2\pi\mu}}.
\end{equation}
In order to apply Lemma~\ref{lem:intasymp}, we must verify that the
function $\mu\mapsto v(z;\mu)$ and the exponent $\Gb(z)$ satisfy the
assumptions in  Lemma~\ref{lem:intasymp}. The continuity of $\mu\mapsto
v(z;\mu)$ follows from
the Euler integral representation of the hypergeometric function combined with formula
(\ref{uofx}), which gives
\begin{equation}
  \label{Euler}
  u(z;\mu)=z^{-\frac{1}{2}+i\mu}\frac{\sin\left(\frac{3\pi}{4}+i\frac{\pi\mu}{2}\right)}{\pi}
\int_0^1t^{-\frac{1}{4}+\frac{i\mu}{2}}(1-t)^{-\frac{3}{4}-\frac{i\mu}{2}}
(1-(1-z^2)t)^{-\frac{1}{4}-\frac{i\mu}{2}}dt.
\end{equation}
The integrand is continuous in $\mu$ and bounded by
$t^{-1/4}(1-t)^{-3/4}|(1-(1-z^2)t|^{-1/4}e^{\pi\mu} \in L^1(0,1)$. An
application of the Lebesgue dominated convergence theorem implies that
$\mu\mapsto u(z;\mu)$ is continuous on $[0,\infty)$ for any $z\in\GO$. Formula
(\ref{u0zmu}) shows that $u_0(z;\mu)$ is nonvanishing and continuous in
$\mu>0$ proving the continuity of $\mu\mapsto v(z;\mu)$, while
Lemma~\ref{lem:ux0asym} implies that $v(z;\mu)\to 1$, as $\mu\to\infty$, for
every $z\in\GO$. Finally, the required constraint $\Re\Gb(z)\in(0,1)$ for any
$z\in\GO$, is guaranteed by the following lemma.
\begin{lemma}\label{lem:expbnd}
 $\Re\Ga(z)\in(0,\frac{\pi}{2})$ for any $z\in\GO$, where $\Ga(z)$ is defined in (\ref{Ralpha}).
\end{lemma}
\begin{proof}
We observe that $\Ga:\GO\to\bb{C}$ is injective since
$\cos\Ga(z)=1/z$. Thus, $\Md_{\infty}\Ga(\GO)=\Ga(\Md_{\infty}\GO)$,
where $\Md_{\infty}\GO$ refers to the boundary of $\GO$ in the
Riemann sphere $\bb{C}\cup\{\infty\}$. It
is easy to see that $\Ga(z)$ maps the ray $i(0,+\infty)$ to the line
$\pi/2+i\bb{R}$; the ray $i(-\infty,0)$ to the same line
$\pi/2+i\bb{R}$. It maps the interval $[0,1]+0i$ to the ray
$i[0,+\infty]$ and the interval $[0,1]-0i$ to the ray
$i[-\infty,0]$. While, $\sqrt{z^{2}-1}=z\sqrt{1-z^{-2}}$, when $z\to\infty$,
$z\in\GO$. Therefore, $\Ga(z)\to i\ln(-i)=\pi/2$, as $z\to\infty$. We
conclude that
$\Md_{\infty}\Ga(\GO)=i\bb{R}\cup\pi/2+i\bb{R}\cup\{\infty\}$, and
$\Ga(\GO)=\{w\in\bb{C}:0<\Re w<\pi/2\}$ since
$\Ga(\GO)$ must be a connected subset of $\bb{C}$.
\end{proof}
Lemma~\ref{lem:intasymp} can now be applied, and we obtain
\[
\psi_{\Gve}(z)\sim\frac{R(x_0)R(z)\hat{\Gve}^{-2\Gb(z)}}{2\pi\sin\pi\Gb(z)},
\]
\[
\|\psi_{\Gve}\|_2^2\sim\frac{R(x_0)^2(1-\Gb(x_{0}))\hat{\Gve}^{-2\Gb(x_{0})}}
{2\pi\sin\pi\Gb(x_{0})},
\]
and
\[
\|\psi_{\Gve}\|^2\sim\frac{R(x_{0})^{2}\Gb(x_{0})\hat{\Gve}^{-2\Gb(x_{0})-2}}
{4\pi^{2}\sin\pi\Gb(x_{0})}.
\]
Substituting the values of $R(x_{0})$, $\Ga(x_{0})$, and $\Gb(x_{0})$, we
obtain the claimed asymptotic formulas (i)---(iii).
\end{proof}
Now, we can compute the explicit asymptotics of $\GD^{x_{0}}_{*}(\Ge)$, given
in (\ref{exactasymp1}). We compute
\[
\frac{\psi_{\Gve}(x_{0})}{\|\psi_{\Gve}\|}\sim\hat{\Gve}^{1-\Gb(x_{0})}\sqrt{\frac{x_{0}}{2(x_{0}^{2}-1)\Gb(x_{0})}}=:E_{0}(\Gve),
\]
and
\[
\frac{\|\psi_{\Gve}\|_{2}}{\|\psi_{\Gve}\|}\sim\Gve\sqrt{\frac{\arcsin(1/x_{0})}{\arccos(1/x_{0})}}=:E_{1}(\Gve).
\]
It is now evident that functions $E_{0}(\Gve)$ and $E_{1}(\Gve)$ are
continuous and monotone increasing on $[0,1)$, such that
$E_0(0)=E_1(0)=0$. Since $E_{1}(\Gve)$ is linear and $E_{0}(\Gve)$ is a
constant multiple of a power, property (\ref{E0E1prop}) reads
\[
\frac{E_0(\eps)}{E_0\left(E_1^{-1}\left(E_1(\eps) r(\eps) \right) \right)} = \frac{E_0(\eps)}{E_0\left(\eps r(\eps) \right)} = 
(r(\eps))^{\Gb(x_{0})-1} \to 1, \text{ as } \eps \to 0^+.
\]
for any function $r(\eps)$ such that $r(\eps) \to 1$ as $\eps \to 0^+$. Thus,
formula (\ref{exactasymp1}) applies, and
\[
  \GD^{x_{0}}_{*}(\Ge) \sim E_0(E_1^{-1}(\Ge))=\sqrt{\frac{x_{0}}{2(x_{0}^{2}-1)\Gb(x_{0})}}
\left(\sqrt{\frac{2\pi\arcsin(1/x_{0})}{\arccos(1/x_{0})}}\right)^{\Gb(x_{0})-1}\Ge^{1-\Gb(x_{0})}.
\]
Substituting the values of $\Ga(x_{0})=\arccos(1/x_{0})$ and
$\Gb(x_{0})=2\Ga(x_{0})/\pi$ into the above formula, we obtain
Theorem~\ref{th:GDstar} for all $x_{0}>1$. In particular, we see that for any $x_{0}>1$
\begin{equation}
  \label{gammastar}
  \Gg_{*}(x_{0})=\lim_{\Ge\to 0}\frac{\ln\GD^{x_{0}}_{*}(\Ge)}{\ln\Ge}=\frac{2}{\pi}\arcsin\left(\frac{1}{x_{0}}\right).
\end{equation}

The singular behavior at $x_{0}=1$ of coefficients in all of our asymptotic
formulas indicates that the asymptotic analysis for $x_{0}=1$ needs to be done
separately.

\begin{theorem}
\label{th:psi1}
Let $\psi_{\Gve}$ be the solution of the integral equation (\ref{inteq}) with
$x_{0}=1$. Then
\begin{itemize}
\item[(i)] $\displaystyle\| \psi_{\eps} \|_2^2 \sim \psi_{\eps}(1) \sim
  \frac{2(\ln \eps)^2}{\pi^2}$;
\item[(ii)] $\displaystyle\| \psi_{\eps} \|^2 \sim \frac{-2\ln\Gve}{\pi^{2} \eps^2}$.
\end{itemize}
\end{theorem}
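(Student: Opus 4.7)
The plan is to specialize the integral representations (\ref{phinorm}), (\ref{phix0}), and (\ref{phiHnorm}) to $x_0 = 1$, where the eigenfunctions collapse: since $u(1;\mu) \equiv 1$, all three integrals reduce to one-parameter integrals in $\hat\eps$. The general framework of Theorem~\ref{th:psiyp} and Lemma~\ref{lem:intasymp} cannot be used here, because the prefactor $R(x_0) = x_0^{-1/2}(x_0^2-1)^{-1/4}$ diverges as $x_0 \to 1^+$, the exponent $\Gb(x_0) = 2\arccos(1/x_0)/\pi$ vanishes at $x_0 = 1$, and the hypothesis $\Re\Gb \in (0,k)$ of Lemma~\ref{lem:intasymp} fails. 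Instead, I would run a direct Laplace-type analysis using the substitution $\nu = \pi\mu + 2\ln\hat\eps$, which converts the denominator into $2\hat\eps^2\cosh(\pi\mu)+1 = e^\nu + \hat\eps^4 e^{-\nu} + 1 \to e^\nu + 1$ pointwise, while $\mu = (\nu - 2\ln\hat\eps)/\pi$ supplies the large parameter $-2\ln\hat\eps/\pi$.

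For part (i), applying this substitution to
\[
\psi_\eps(1) = \int_0^\infty \frac{\mu\tanh(\pi\mu)}{2\hat\eps^2\cosh(\pi\mu)+1}\,d\mu,\qquad
\|\psi_\eps\|_2^2 = \int_0^\infty \frac{\mu\tanh(\pi\mu)}{(2\hat\eps^2\cosh(\pi\mu)+1)^2}\,d\mu,
\]
each integral reduces at leading order to $\pi^{-2}\int_{2\ln\hat\eps}^\infty (\nu - 2\ln\hat\eps)(e^\nu + 1)^{-k}\,d\nu$ with $k=1,2$ respectively. Splitting $\nu - 2\ln\hat\eps$ into its two pieces, the $-2\ln\hat\eps$ part gives $-2\ln\hat\eps \cdot \int_{2\ln\hat\eps}^\infty (e^\nu+1)^{-k}\,d\nu \sim 4(\ln\hat\eps)^2/\pi^2$, since the integrand tends to $1$ as $\nu \to -\infty$; the $\nu$ piece contributes $\pi^{-2}\int_{2\ln\hat\eps}^0 \nu\,d\nu + O(1) = -2(\ln\hat\eps)^2/\pi^2 + O(1)$. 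Summing and using $\hat\eps = \eps/\sqrt{2\pi}$ yields $\psi_\eps(1) \sim \|\psi_\eps\|_2^2 \sim 2(\ln\eps)^2/\pi^2$, proving (i).

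For part (ii), the decisive difference is that $\sinh(\pi\mu)$ replaces $\tanh(\pi\mu)$: writing $\sinh(\pi\mu) \sim e^{\pi\mu}/2 = e^\nu/(2\hat\eps^2)$, the substitution in $\|\psi_\eps\|^2 = \pi^{-1}\int_0^\infty \mu\sinh(\pi\mu)(2\hat\eps^2\cosh(\pi\mu)+1)^{-2}d\mu$ produces
\[
\|\psi_\eps\|^2 \sim \frac{1}{2\pi^3\hat\eps^2}\int_{2\ln\hat\eps}^\infty \frac{(\nu - 2\ln\hat\eps)\,e^\nu}{(e^\nu+1)^2}\,d\nu.
\]
The weight $e^\nu(e^\nu+1)^{-2}\,d\nu = \tfrac{1}{4}\mathop{\rm sech}^2(\nu/2)\,d\nu$ is integrable on $\bb{R}$, has total mass $1$, and is invariant under $\nu \mapsto -\nu$, hence $\int_{\bb{R}}\nu\, e^\nu(e^\nu+1)^{-2}\,d\nu = 0$. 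Therefore only the contribution $-2\ln\hat\eps \cdot \int_{\bb{R}} e^\nu(e^\nu+1)^{-2}\,d\nu = -2\ln\hat\eps$ survives at leading order, giving $\|\psi_\eps\|^2 \sim -\ln\hat\eps/(\pi^3\hat\eps^2) = -2\ln\eps/(\pi^2\eps^2)$, which is (ii).

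The main technical obstacle is rigorously controlling the errors incurred when replacing $\tanh(\pi\mu)$ by $1$, $2\hat\eps^2\cosh(\pi\mu)$ by $e^\nu$ (dropping the $\hat\eps^4 e^{-\nu}$ term), and $\sinh(\pi\mu)$ by $e^{\pi\mu}/2$, uniformly in $\hat\eps$ near $0$. I would split the $\mu$-axis into a small-$\mu$ region, where $\mu\tanh(\pi\mu) \sim \pi\mu^2$ makes the integrand vanish at $\mu = 0$; a transition region $\mu \sim -2\ln\hat\eps/\pi$, where a dominating integrable envelope of the form $C/(e^\nu + 1)^k$ (with $C$ independent of $\hat\eps$) enables the Lebesgue dominated convergence theorem; and a tail region, where exponential decay from the denominator beats the exponential growth of $\sinh$. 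Each region yields an $o((\ln\hat\eps)^2)$ error in (i), and an $o(-\ln\hat\eps/\hat\eps^2)$ error in (ii), consistent with the claimed leading-order asymptotics.
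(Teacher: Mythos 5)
Your computation is correct and reaches the stated asymptotics, but you take a genuinely different route from the paper. The paper first proves a short general simplification lemma (Lemma~\ref{lem:W}: if $\|W_\eps-\hat{W}_\eps\|_{L^1}$ stays bounded while $|\int W_\eps|\to\infty$, then the two integrals are asymptotic), uses it twice to replace the integrands by $\mu/(\hat\eps^2 e^{\pi\mu}+1)^k$ and $\mu e^{\pi\mu}/(\hat\eps^2 e^{\pi\mu}+1)^2$, evaluates the last of these in closed form ($I_0(\hat\eps)=\ln(1+\hat\eps^{-2})/(2\pi^3\hat\eps^2)$, giving part (ii) directly), and then extracts the $(\ln\hat\eps)^2$ behavior of $I_1,I_2$ by two applications of L'H\^opital's rule. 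You instead make the substitution $\nu=\pi\mu+2\ln\hat\eps$ at the outset, split the shifted numerator $\nu-2\ln\hat\eps$ into its two pieces, read off the dominant $(\ln\hat\eps)^2$ contribution for part (i) from elementary integrals of $1$ and $\nu$ over $(2\ln\hat\eps,0)$, and for part (ii) observe that the weight $e^\nu(e^\nu+1)^{-2}=\tfrac14\sech^2(\nu/2)$ is a symmetric probability density on $\bb{R}$, so the $\nu$-piece vanishes in the limit and only the $-2\ln\hat\eps$ piece survives. That symmetry argument is a nice touch: it \emph{explains} why the coefficient in part (ii) is exactly $-\ln\hat\eps/(\pi^3\hat\eps^2)$ with no logarithmic correction, which the paper's closed-form evaluation merely records.

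The one place where your write-up is thinner than the paper is the error control at the end. You invoke dominated convergence, but DCT cannot be applied directly to integrals that blow up as $\hat\eps\to0^+$; it only becomes applicable after the numerator split that you performed earlier, applied piece-by-piece to the $\nu$-weighted and unweighted integrals separately (where the limits are finite), or after dividing through by $(\ln\hat\eps)^2$ resp.\ $-\ln\hat\eps/\hat\eps^2$. Similarly, the ``small-$\mu$ region'' discussion does not quite target the relevant issue: the error from replacing $\tanh(\pi\mu)$ by $1$ is $O(1)$ simply because $|\mu(1-\tanh\pi\mu)|\le 2\mu e^{-2\pi\mu}\in L^1(0,\infty)$ independently of $\hat\eps$, not because the integrand vanishes at $\mu=0$. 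Making your sketch rigorous would amount to reconstructing the paper's Lemma~\ref{lem:W} (uniform $L^1$ bounds on the integrand differences for the $\tanh\to1$, $2\sinh\to e^{\pi\mu}$, and $2\hat\eps^2\cosh(\pi\mu)+1\to\hat\eps^2 e^{\pi\mu}+1$ replacements) and then applying the change of variables and numerator split to the simplified integrals, exactly as you do. So the skeleton of your argument is sound; only the justification paragraph needs to be reorganized around the split before invoking dominated convergence.
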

\begin{proof}

Whenever $x_0 = 1$, our formulas (\ref{phinorm}), (\ref{phix0}), and
(\ref{phiHnorm}) simplify because $u(1;\mu) \equiv 1$:
\begin{equation}\label{psi1exprss}
\psi_{\Gve}(1) =  \int_{0}^{\infty}\frac{\mu \tanh(\pi \mu) }{2 \hat{\eps}^2 \cosh(\pi \mu) +1}d\mu, 
\quad
\| \psi_{\Gve} \|_2^2  =  \int_{0}^{\infty}\frac{\mu \tanh(\pi \mu) }{(2 \hat{\eps}^2 \cosh(\pi \mu) +1)^2}d\mu,
\end{equation}
\begin{equation}
  \label{psix1Hnorm}
  \|\psi_{\Gve}\|^{2}=\nth{\pi}\int_{0}^{\infty} \frac{\mu\sinh(\pi \mu)}{(2\hat{\Gve}^{2}\cosh(\pi\mu)+1)^{2}} d\mu. 
\end{equation}
The situation here is similar to the one for $x_{0}>1$ in that setting
$\hat{\Gve}=0$ still results in divergent integrals. This indicates that it is the
behavior of the integrands at $\mu=\infty$ that determines the asymptotics of
the integrals when $\hat{\Gve}\to 0^{+}$. When $\mu$ is large $\tanh(\pi\mu)$
will be replaced by 1, and both $2\cosh(\pi\mu)$ and $2\sinh(\pi\mu)$, by
$e^{\pi\mu}$. To make this heuristic argument rigorous, we make a simple observation
that we formulate as a lemma for easy reference.
\begin{lemma}\label{lem:W}
Let $(G,\Gs)$ be an arbitrary measure space. Suppose that for any $\eps\in(0,\eps_{0})$
$\{W_{\eps},\hat{W}_{\eps}\}\subset L^1(G;d\Gs)$ and
\begin{itemize}
\item[(i)] $\displaystyle\limi_{\eps\to 0^{+}}\left|\int_{G}W_{\eps}(\mu)d\Gs(\mu)\right|=\infty$;
\item[(ii)] $\displaystyle\lims_{\eps\to 0^{+}}\|W_{\eps}-\hat{W}_{\eps}\|_{L^{1}(G;d\Gs)}<\infty$.
\end{itemize}
Then $\int_G W_{\eps}(\mu)d\Gs(\mu) \sim \int_G \hat{W}_{\eps}(\mu)d\Gs(\mu)$,
as $\eps\to 0^{+}$.
\end{lemma}
\begin{proof}
$\displaystyle
\lims_{\eps\to 0^{+}}\left| \frac{\int_G \hat{W}_{\eps}(\mu)d\Gs(\mu)}{\int_G W_{\eps}(\mu) d\Gs(\mu)} - 1 \right| \le \frac{\displaystyle\lims_{\eps\to 0^{+}}\|\hat{W}_{\eps} - W_{\eps} \|_{L^1(G;d\Gs)}}{\displaystyle\limi_{\eps\to 0^{+}}\left|\int_G W_{\eps}(\mu)d\Gs(\mu)\right|}= 0.
$
\end{proof}
As we have already pointed out, the integrals in (\ref{psi1exprss}) and
(\ref{psix1Hnorm}) satisfy condition (i) of the lemma. Then estimates 
\[
|\tanh(\pi\mu)-1|\le2e^{-2\pi\mu},\qquad|2\sinh(\pi\mu)-e^{\pi\mu}|=e^{-\pi\mu}
\]
ensure that condition (ii) of the lemma is satisfied, and we conclude that
\[
\psi_{\Gve}(1)\sim\int_{0}^{\infty}\frac{\mu d\mu}{2 \hat{\eps}^2 \cosh(\pi \mu) +1}, 
\quad
\| \psi_{\Gve} \|_2^2\sim\int_{0}^{\infty}\frac{\mu d\mu}{(2 \hat{\eps}^2 \cosh(\pi \mu) +1)^2},
\]
and
\[
  \|\psi_{\Gve}\|^{2}\sim\nth{2\pi}\int_{0}^{\infty} \frac{\mu e^{\pi\mu}d\mu}{(2\hat{\Gve}^{2}\cosh(\pi\mu)+1)^{2}}. 
\]
Similarly, the estimate
\[
\left|\frac{\mu}{2\hat{\Gve}^{2}\cosh(\pi\mu)+1}-\frac{\mu}{\hat{\Gve}^{2}e^{\pi\mu}+1}\right|  \le  2\hat{\eps}^{2}\mu e^{-\pi\mu}
\]
implies that
\[
\psi_{\Gve}(1)\sim\int_{0}^{\infty}\frac{\mu d\mu}{\hat{\eps}^2e^{\pi\mu}+1}.
\]
To handle the remaining two integrals, we define
\[
W_{\Gve}(\mu)=\frac{\mu e^{\pi\mu}}{(2\hat{\Gve}^{2}\cosh(\pi\mu)+1)^2},\quad
\hat{W}_{\Gve}(\mu)=\frac{\mu e^{\pi\mu}}{(\hat{\Gve}^{2}e^{\pi\mu}+1)^2}.
\]
We first compute
\[
|W_{\Gve}(\mu)-\hat{W}_{\Gve}(\mu)|=\frac{\mu\hat{\Gve}^{2}(2\hat{\Gve}^{2}e^{\pi\mu}+2+\hat{\Gve}^{2}e^{-\pi\mu})}{(2\hat{\Gve}^{2}\cosh(\pi\mu)+1)^2(\hat{\Gve}^{2}e^{\pi\mu}+1)^2},
\]
and estimate
\[
2\hat{\Gve}^{2}e^{\pi\mu}+2+\hat{\Gve}^{2}e^{-\pi\mu}\le3(\hat{\Gve}^{2}e^{\pi\mu}+1),
\]
so that
\[
|W_{\Gve}(\mu)-\hat{W}_{\Gve}(\mu)|
\le \frac{3\mu\hat{\Gve}^{2}}{(2\hat{\Gve}^{2}\cosh(\pi\mu)+1)^2(\hat{\Gve}^{2}e^{\pi\mu}+1)}.
\]
Next, we estimate
\[
\hat{\Gve}^{2}e^{\pi\mu}+1\ge \hat{\Gve}^{2}e^{\pi\mu},\qquad
(2\hat{\Gve}^{2}\cosh(\pi\mu)+1)^2\ge 1,
\]
and obtain
\[
|W_{\Gve}(\mu)-\hat{W}_{\Gve}(\mu)|\le 3\mu e^{-\pi\mu}\in L^{1}(0,\infty).
\]
Thus, Lemma~\ref{lem:W} is applicable and
\[
\psi_{\Gve}(1)\sim\int_{0}^{\infty}\frac{\mu d\mu}{\hat{\eps}^2e^{\pi\mu}+1}:=I_{1}(\hat{\Gve}), 
\qquad
\| \psi_{\Gve} \|_2^2\sim\int_{0}^{\infty}\frac{\mu
  d\mu}{(\hat{\eps}^2e^{\pi\mu}+1)^2}:=I_{2}(\hat{\Gve}),
\]
\[
\|\psi_{\Gve}\|^{2}\sim I_{0}(\hat{\Gve}):=\nth{2\pi}\int_{0}^{\infty}\frac{\mu e^{\pi\mu}d\mu}
{(\hat{\Gve}^{2}e^{\pi\mu}+1)^{2}}=
\frac{\ln\left(1+\frac{1}{\hat{\Gve}^{2}}\right)}{2\pi^{3}\hat{\Gve}^{2}}\sim
-\frac{\ln\hat{\Gve}}{\pi^{3}\hat{\Gve}^{2}},
\]
establishing part (ii) of the theorem. Part (i) is proved by means of the
L'H\^opital rule:
\[
\lim_{\hat{\Gve}\to 0^{+}}\frac{I_{1}(\hat{\Gve})}{(\ln\hat{\Gve})^{2}}=
\lim_{\hat{\Gve}\to 0^{+}}\frac{\hat{\Gve}I'_{1}(\hat{\Gve})}{2\ln\hat{\Gve}}=
-\lim_{\hat{\Gve}\to 0^{+}}\frac{2\pi\hat{\Gve}^{2}I_{0}(\hat{\Gve})}{\ln\hat{\Gve}}=\frac{2}{\pi^{2}}.
\]
To apply the L'H\^opital rule to $I_{2}(\hat{\Gve})$, we compute
\[
I'_{2}(\hat{\Gve})=-4\hat{\Gve}\int_{0}^{\infty}\frac{\mu e^{\pi\mu}d\mu}
{(\hat{\Gve}^{2}e^{\pi\mu}+1)^{3}}=
-2\frac{(\hat{\Gve}^{2}+1)\ln(1+\hat{\Gve}^{-2})-1}{\pi^{2}\hat{\Gve}(\hat{\Gve}^{2}+1)}
\sim\frac{4\ln\hat{\Gve}}{\pi^{2}\hat{\Gve}}.
\]
Thus,
\[
\lim_{\hat{\Gve}\to 0^{+}}\frac{I_{2}(\hat{\Gve})}{(\ln\hat{\Gve})^{2}}=
\lim_{\hat{\Gve}\to 0^{+}}\frac{\hat{\Gve}I'_{2}(\hat{\Gve})}{2\ln\hat{\Gve}}=\frac{2}{\pi^{2}}.
\]
The theorem is now proved.
\end{proof}
According to Theorem~\ref{th:psi1},
\[
\frac{\psi_{\eps}(1)}{\|\psi_{\eps}\|}\sim\frac{\sqrt{2}}{\pi}\Gve|\ln\Gve|^{3/2}=:E_{0}(\Gve),\quad
\frac{\|\psi_{\eps}\|_{2}}{\|\psi_{\eps}\|}\sim\Gve\sqrt{|\ln\Gve|}=:E_{1}(\Gve).
\]
This shows that both $E_{0}(\Gve)$ and $E_{1}(\Gve)$ are continuous, monotone
increasing functions on $[0,e^{-3/2})$, satisfying $E_{0}(0)=E_{1}(0)=0$. In order to use formula
(\ref{exactasymp1}) for the exact asymptotics of $\GD^{1}_{*}(\Ge)$, we need to
verify property (\ref{E0E1prop}). This is somewhat tedious. Let
$r(\Gve)\to 1$, as $\Gve\to 0^{+}$ be arbitrary. To make calculations more
compact, we define $\Gd=\Gd(\Gve)=r(\Gve)E_{1}(\Gve)$. Then,
\[
\rho(\Gve)=\frac{E_{0}(\Gve)}{E_{0}(E_{1}^{-1}(E_{1}(\Gve)r(\Gve)))}=
\frac{\Gve|\ln\Gve|^{3/2}}{E_{1}^{-1}(\Gd)|\ln E_{1}^{-1}(\Gd)|^{3/2}}=
\frac{\Gve|\ln\Gve|^{3/2}}{\Gd|\ln E_{1}^{-1}(\Gd)|}=\frac{|\ln\Gve|}{r(\Gve)|\ln E_{1}^{-1}(\Gd)|},
\]
where we have used the relation $E_{1}^{-1}(\Gd)|\ln
E_{1}^{-1}(\Gd)|^{1/2}=E_{1}(E_{1}^{-1}(\Gd))=\Gd$ together with the formula
for $\Gd(\Gve)$. Next, we write
\[
|\ln\Gve|=|\ln r(\Gve)E_{1}(\Gve)-\ln(r(\Gve)\sqrt{|\ln\Gve|})|=|\ln \Gd(\Gve)|\tilde{r}(\Gve),
\]
where
\[
\tilde{r}(\Gve)=\left|1-\frac{\ln(r(\Gve)\sqrt{|\ln\Gve|})}{\ln(r(\Gve)\Gve\sqrt{|\ln\Gve|})}\right|\to
1,\text{ as }\Gve\to 0^{+}.
\]
Thus,
\[
\rho(\Gve)=\frac{\tilde{r}(\Gve)|\ln\Gd|}{r(\Gve)|\ln E_{1}^{-1}(\Gd)|}.
\]
It remained to observe that $\Gd(\Gve)\to 0^{+}$, as $\Gve\to 0^{+}$ and
therefore, $\eta(\Gve)=E_{1}^{-1}(\Gd(\Gve))\to 0^{+}$. Hence,
\[
\lim_{\Gve\to 0^{+}}\rho(\Gve)=\lim_{\Gve\to 0^{+}}\frac{\tilde{r}(\Gve)}{r(\Gve)}\cdot
\lim_{\Gd\to 0^{+}}\frac{|\ln\Gd|}{|\ln E_{1}^{-1}(\Gd)|}=\lim_{\eta\to 0^{+}}
\frac{|\ln E_{1}(\eta)|}{|\ln\eta|}=1.
\]
Formula (\ref{exactasymp1}) is now applicable, and we compute, using $E_{1}^{-1}(\Ge)|\ln
E_{1}^{-1}(\Ge)|^{1/2}=\Ge$,
\[
\GD^{1}_{*}(\Ge)\sim\frac{\sqrt{2}}{\pi}E_{1}^{-1}(\Ge)|\ln E_{1}^{-1}(\Ge)|^{3/2}=
\frac{\sqrt{2}}{\pi}\Ge|\ln\Ge|\frac{|\ln E_{1}^{-1}(\Ge)|}{|\ln\Ge|}\sim
\frac{\sqrt{2}}{\pi}\Ge|\ln\Ge|
\]
since
\[
\lim_{\Ge\to 0^{+}}\frac{|\ln E_{1}^{-1}(\Ge)|}{|\ln\Ge|}=\lim_{\eta\to 0^{+}}\frac{|\ln\eta |}{|\ln E_{1}(\eta)|}=1.
\]
In particular, we can conclude that
\[
\Gg_{*}(1)=\lim_{\Ge\to 0^{+}}\frac{\ln\GD^{1}_{*}(\Ge)}{\ln\Gve}=1=\lim_{x_{0}\to 1^{+}}\Gg_{*}(x_{0}).
\]
This completes the proof of Theorem~\ref{th:GDstar} for $x_{0}=1$.


\section{A continuous family of Hilbert space norms}
\setcounter{equation}{0}
Our task now is to connect the explicit exponent $\Gg_{*}(x_{0})$, given by (\ref{gammastar})
to the desired exponent $\Gg(x_{0})$ coming from the $(f,g)$-problem
(\ref{fgproblem}). This is done by introducing a family of norms that help us
to bridge the gap between the the $L^{2}(0,1)$ norm and the $H^{2}(\CR)$ norm
on the convex cone $\mathfrak{C}_{2}$.
In reference to $f\in\mathfrak{C}_{2}$, we will use the notation
\begin{equation}
  \label{Hrep}
  (\CF_{p}[f])(z)=\frac{f(z^{1/p})}{z^{\frac{p-1}{2p}}(z^{1/p}+1)},\quad p\ge 1,
\end{equation}
where the principal branch of $z^{\Ga}$ is always chosen. For all $p\ge 1$
and $f\in\mathfrak{C}_{2}$ the functions $\CF_{p}[f]$ are analytic on the
complex right half-plane $\CR$. We
then define the family of spaces
\begin{equation}
  \label{Hardy}
  \mathfrak{H}_{p}=\{f\in\CH(\CR): \CF_{p}[f]\in H\},\quad p\ge 1,
\end{equation}
equipped with norms
$
\|f\|_{\mathfrak{H}_{p}}=\|\CF_{p}[f]\|.
$
\begin{theorem}
  \label{th:HLB}
$\mathfrak{C}_{2}\subset \mathfrak{H}_{p}$, for every $p>1$, and there exists a constant $C_{p}>0$, such that
  \begin{equation}
    \label{HLB}
    \|f\|_{\mathfrak{H}_{p}}\le C_{p}\|f\|_{2},
  \end{equation}
for every $f\in\mathfrak{C}_{2}$.
\end{theorem}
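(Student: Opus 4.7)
The plan is to bound the Hardy boundary integral of $\CF_p[f]$ directly via the CMF integral representation (\ref{intrep}), and then to upgrade this $L^2$ boundary control into $H^2(\CR)$ membership using analyticity. Analyticity on $\CR$ is transparent: the principal branch maps $\CR$ into the subsector $\{|\arg w|<\pi/(2p)\}\subset\CR$, so $f(z^{1/p})$ is analytic; $z^{(p-1)/(2p)}$ is analytic on $\CR$; and $z^{1/p}+1$ is non-vanishing there. The Schwarz reflection $\overline{\CF_p[f](z)}=\CF_p[f](\bar z)$ follows from $\overline{f(w)}=f(\bar w)$ and $\overline{z^{\alpha}}=\bar z^{\alpha}$ under the principal branch on $\CR$.

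For the boundary norm I would change variables $w=z^{1/p}$ on $z=iy$, $y>0$, obtaining $w=xe^{i\theta}$ with $\theta=\pi/(2p)$, $x=y^{1/p}$, and $dy=px^{p-1}dx$. The factor $|z^{(p-1)/(2p)}|^{-2}=x^{-(p-1)}$ in $|\CF_p[f](iy)|^2$ exactly cancels the Jacobian $x^{p-1}$, so after doubling for the lower half of the imaginary axis via the symmetry,
$$\|\CF_p[f]\|^2=\frac{p}{\pi}\int_0^\infty\frac{|f(xe^{i\theta})|^2}{|xe^{i\theta}+1|^2}dx.$$
The representation (\ref{intrep}) yields $|f(xe^{i\theta})|\le f(x\cos\theta)=f(cx)$ with $c=\cos(\pi/(2p))\in(0,1)$. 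Substituting $u=cx$ and using the identity $\cos\theta=c$ puts the denominator in the form $(u^2+2uc^2+c^2)/c^2$, so the integral is bounded by $c\int_0^\infty f(u)^2/(u^2+2uc^2+c^2)\,du$.

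To close the bound I would split at $u=1$. On $(0,1)$ the denominator is at least $c^2$, contributing $c^{-1}\|f\|_2^2$. On $(1,\infty)$ the denominator exceeds $u^2$ and, by the monotonicity of CMFs, $f(u)\le f(1)$, so $\int_1^\infty f(u)^2 u^{-2}du\le f(1)^2$; applying the elementary consequence $f(1)^2=\int_0^1 f(1)^2 du\le\int_0^1 f(u)^2du=\|f\|_2^2$ of monotonicity gives the tail bound $\|f\|_2^2$. Reassembling with the factor $c$ then $p/\pi$,
$$\|\CF_p[f]\|^2\le\frac{p(1+c^2)}{\pi c}\|f\|_2^2,$$
so one may take $C_p=\sqrt{p(1+c^2)/(\pi c)}$ with $c=\cos(\pi/(2p))$; note the blow-up as $p\to 1^+$, consistent with the hypothesis $p>1$.

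To upgrade the $L^2$ boundary bound to $H^2(\CR)$ membership, I would invert the Fourier transform of the boundary values $\CF_p[f](i\cdot)$ to obtain $\sigma_p\in L^2(\bb{R})$, then argue in parallel with Lemma~\ref{lem:Hrep} that the analyticity of $\CF_p[f]$ in $\CR$ together with the decay $|\CF_p[f](z)|\to 0$ as $|z|\to\infty$ in $\CR$ (provided by the factors $(z^{1/p}+1)^{-1}$ and $z^{-(p-1)/(2p)}$) forces $\sigma_p$ to vanish on $(-\infty,0)$; by Paley--Wiener, $\CF_p[f]$ is then the Laplace transform of a function in $L^2(0,\infty)$ and lies in $H$. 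The most delicate step in the whole argument is the tail estimate: because $\|f\|_2$ constrains $f$ only on $(0,1)$, any tail control must be manufactured from the monotonicity of CMFs, both to bound $f$ on $[1,\infty)$ by $f(1)$ and to bound $f(1)$ by $\|f\|_2$. The $z^{(p-1)/(2p)}$ factor in the denominator of $\CF_p[f]$ is exactly what is needed for the Jacobian cancellation that renders this real-variable estimate tractable.
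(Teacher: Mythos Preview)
Your boundary-norm estimate is correct and is in fact tidier than the paper's: where the paper decomposes $\int_0^\infty$ as $\sum_n\int_n^{n+1}$ and invokes $f(x+n)\le f(x)$, you split once at $u=1$ and use $f(u)\le f(1)$ together with $f(1)^2\le\|f\|_2^2$. Both arguments rely only on the monotonicity of CMFs; yours gives a shorter path to an explicit $C_p$.

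The gap is in the $H^2$ step. Analyticity on $\CR$, $L^2$ boundary values, and pointwise decay at infinity do \emph{not} by themselves give $H^2(\CR)$ membership, and appealing to Lemma~\ref{lem:Hrep} is circular: that lemma starts from $\phi\in H$, and the Kramers--Kronig relation it uses is itself a consequence of Hardy-space membership. The Paley--Wiener route you sketch would still need either the uniform bound $\sup_{x>0}\int_{\bb R}|\CF_p[f](x+iy)|^2\,dy<\infty$ or a Smirnov-class argument to identify the $H^2$ extension of the boundary data with $\CF_p[f]$ itself. Note also that $\CF_p[f]$ need not be bounded near $z=0$ (take $f\equiv 1$), so any argument relying on global boundedness on $\overline{\CR}$ fails.

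The fix is immediate and actually makes your approach cleaner than the paper's, since it avoids both Paley--Wiener and the paper's density-plus-weak-compactness argument. Your own majorant already works off the boundary: for $x>0$, $y>0$ one has $|f_\sigma((x+iy)^{1/p})|\le f_\sigma(\Re((x+iy)^{1/p}))$, while $\Re((x+iy)^{1/p})$, $|x+iy|^{(p-1)/p}$, and $|(x+iy)^{1/p}+1|^2$ are each monotone increasing in $x$ for fixed $y$ (this is exactly what the paper verifies for single exponentials, but it holds pointwise and so applies to any $f_\sigma$). Since $f_\sigma$ is decreasing,
\[
|\CF_p[f_\sigma](x+iy)|^2\le\frac{f_\sigma\bigl(c\,y^{1/p}\bigr)^2}{y^{(p-1)/p}\,|(iy)^{1/p}+1|^2}
\]
uniformly in $x>0$, and the $y$-integral of the right-hand side is precisely what your split-at-$u=1$ estimate bounds by $\pi C_p^2\|f\|_2^2$. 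This gives the Hardy $\sup$ condition directly, so $\CF_p[f]\in H$.
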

\begin{proof}
  The fact that the functions $f_{\Gd_{t}}=e^{-xt}$ belong to $\mathfrak{H}_{p}$
  follows from the observations that for each fixed $y>0$ and $q\in[0,1]$ the functions
  \[
    \nu_{1}(x)=\re\left[(x+iy)^{q}\right],\quad\nu_{2}(x)=|(x+iy)^{q}|,\quad
    \nu_{3}(x)=|(x+iy)^{q}+1|^{2},\quad q\in[0,1]
  \]
are monotone increasing in $x\in(0,+\infty)$. This is evident from the
polar representation of $x+iy=r(x)e^{i\Gth(x)}$ and the observation
that $r(x)$ is an increasing function of $x$, while $\Gth(x)\in(0,\pi/2)$ is a
decreasing one. Then
\[
  \nu_{1}(x)=r(x)^{q}\cos(q\Gth(x)),\quad\nu_{2}(x)=r(x)^{q},\quad
  \nu_{3}(x)=r(x)^{2q}+1+2r(x)^{q}\cos(q\Gth(x))
\]
are obviously increasing functions since $q\Gth(x)\in[0,\pi/2]$ for
all $x\ge 0$ and $q\in[0,1]$.
Thus,
\[
  |(\CF_{p}[f_{\Gd_{t}}])(x+iy)|^{2}=\frac{e^{-2t\nu_{1}(x)}}{\nu_{2}(x)\nu_{3}(x)}\le
\frac{e^{-2t\nu_{1}(0)}}{\nu_{2}(0)\nu_{3}(0)}=|(\CF_{p}[f_{\Gd_{t}}])(iy)|^{2}.
\]
It is also easy to see that
\[
\int_{0}^{\infty}|(\CF_{p}[f_{\Gd_{t}}])(iy)|^{2}dy=\int_{0}^{\infty}\frac{e^{-2ta_{p}y^{1/p}}}{y^{\frac{p-1}{p}}(y^{2/p}+1+y^{1/p}a_{p})}=p\int_{0}^{\infty}\frac{e^{-2ta_{p}u}}{u^{2}+1+2a_{p}u}du<\infty,
\]
where $a_{p}=\cos(\pi/(2p))$. We conclude that 
\begin{equation}
  \label{flcoe}
    f(x)=\sum_{j=1}^{N}c_{j}e^{-xt_{j}}\in\mathfrak{H}_{p}
\end{equation}
for all $p\ge 1$.

Now, let $\Gs$ be a positive measure, such that
$f_{\Gs}\in\mathfrak{H}_{p}\cap\mathfrak{C}_{2}$. Let us show that (\ref{HLB})
holds for all such functions $f_{\Gs}$. Indeed, for any
$f_{\Gs}\in\mathfrak{H}_{p}\cap\mathfrak{C}_{2}$, we have (see (\ref{norm}))
\[
\|f_{\Gs}\|_{\mathfrak{H}_{p}}=\nth{\sqrt{\pi}}\|(\CF_{p}[f_{\Gs}])(iy)\|_{L^{2}(0,\infty)}.
\]
Then, in order to establish (\ref{HLB}), we need to prove the inequality
\begin{equation}
  \label{L2Hardy}
  \|(\CF_{p}[f_{\Gs}])(iy)\|_{L^{2}(0,\infty)}\le\sqrt{\pi}C_{p}\|f_{\Gs}\|_{2}.
\end{equation}

To prove (\ref{L2Hardy}), we estimate
\begin{equation}
  \label{fzest}
  \left|f_{\Gs}\left((iy)^{1/p}\right)\right|\le\int_{0}^{\infty}e^{-a_{p}y^{1/p}t}d\Gs(t)
  =f_{\Gs}\left(a_{p}y^{1/p}\right).
\end{equation}
We conclude that
\[
\|(\CF_{p}[f_{\Gs}])(iy)\|_{L^{2}(0,\infty)}^{2}\le\int_{0}^{\infty}
\frac{\left|f_{\Gs}\left(a_{p}y^{1/p}\right)\right|^{2}}
{y^{\frac{p-1}{p}}|i^{1/p}y^{1/p}+1|^{2}}dy.
\]
Making a change of variables $u=a_{p}y^{1/p}$, we
obtain
\[
\|(\CF_{p}[f_{\Gs}])(iy)\|_{L^{2}(0,\infty)}^{2}\le\frac{p}{a_{p}}
\int_{0}^{\infty}\frac{|f_{\Gs}(u)|^{2}}{(u+1)^{2}+u^{2}\tan^{2}\left(\frac{\pi}{2p}\right)}du.
\]
Writing
\[
\int_{0}^{\infty}\frac{|f_{\Gs}(u)|^{2}}{(u+1)^{2}+u^{2}\tan^{2}\left(\frac{\pi}{2p}\right)}du=
\sum_{n=0}^{\infty}\int_{n}^{n+1}\frac{|f_{\Gs}(u)|^{2}}{(u+1)^{2}+u^{2}\tan^{2}\left(\frac{\pi}{2p}\right)}du,
\]
and estimating
\[
\nth{(u+1)^{2}+u^{2}\tan^{2}\left(\frac{\pi}{2p}\right)}\le
\nth{(n+1)^{2}+n^{2}\tan^{2}\left(\frac{\pi}{2p}\right)},
\]
when $u\in[n,n+1]$, we obtain the bound
\[
\|(\CF_{p}[f_{\Gs}])(iy)\|_{L^{2}(0,\infty)}^{2}\le\frac{p}{a_{p}}
\sum_{n=0}^{\infty}\frac{\int_{0}^{1}|f_{\Gs}(x+n)|^{2}dx}{(n+1)^{2}+n^{2}\tan^{2}\left(\frac{\pi}{2p}\right)}.
\]
Finally, using the fact that $0\le f_{\Gs}(x+n)\le f_{\Gs}(x)$ for any CMF
 $f_{\Gs}$, we conclude that
\[
\|(\CF_{p}[f_{\Gs}])(iy)\|_{L^{2}(0,\infty)}^{2}\le\frac{p\|f_{\Gs}\|_{2}^{2}}{a_{p}}
\sum_{n=0}^{\infty}\frac{1}{(n+1)^{2}+n^{2}\tan^{2}\left(\frac{\pi}{2p}\right)}.
\]
If we replace $n+1$ by $n$ in the bound above for $n>0$, we obtain a simpler
formula for the constant $C_{p}$:
\[
C_{p}^{2}=\frac{p}{\pi a_{p}}+\frac{\pi pa_{p}}{6},\qquad 
a_{p}=\cos\left(\frac{\pi}{2p}\right).
\]
To finish the proof of the theorem, we need the following density lemma.
\begin{lemma}
  \label{lem:C2density}
  Suppose $f\in\mathfrak{C}_{2}$. Then there exists a sequence of functions
  $f_{n}\in\mathfrak{C}_{2}$ of the form (\ref{flcoe}), such that $f_{n}\to f$ in $L^{2}(0,1)$.
\end{lemma}
\begin{proof}
Let $K$ be the closure in $L^{2}(0,1)$ of the set of positive finite linear
combinations of functions $f_{\Gd_{t}}(x)=e^{-xt}$. Then, $K$ is a closed,
convex subset of $L^{2}(0,1)$. Suppose, there exists
$f_{0}\in\mathfrak{C}_{2}\setminus K$. Then, by the Hahn-Banach
separation theorem there exists $g_{0}\in
L^{2}(0,1)$, such that for all $t\ge 0$
\[
\int_{0}^{1}e^{-xt}g_{0}(x)dx\ge 0>\int_{0}^{1}f_{0}(x)g_{0}(x)dx.
\]
If $\Gs_{0}$ is the spectral measure of $f_{0}\in\mathfrak{C}_{2}$, then integrating the
left inequality above with respect to $\Gs_{0}$, we obtain
\[
\int_{0}^{1}f_{0}(x)g_{0}(x)dx\ge 0,
\]
which contradicts the right inequality. We conclude that $K=\mathfrak{C}_{2}$.
\end{proof}
Now, if $f\in\mathfrak{C}_{2}$ and $f_{n}=f_{\Gs_{n}}$ is as in the
lemma, then by Lemma~\ref{lem:L2star}
$
  \|\Gs_{n}\|_{*}\le \|f_{n}\|_{2}.
$
Thus, we can extract a weak-* convergent subsequence in $X^{*}$, not relabeled, so that
$\Gs_{n}\wk{*}\Gs$, where $X$ is defined in (\ref{X}). It follows that along this subsequence $f_{\Gs_{n}}(z)\to
f_{\Gs}(z)$ for all $z\in\CR$ since $e^{-zt}\in X$. Thus, since $f_{\Gs_{n}}\to f$ in $L^{2}(0,1)$,
then $f_{\Gs}=f$, and consequently $(\CF_{p}[f_{n}])(z)\to
(\CF_{p}[f])(z)$ pointwise on $\CR$. In addition, by the already proved
inequality (\ref{HLB}) for functions (\ref{flcoe}), we have
$\|\CF_{p}[f_{n}]\|=\|f_{n}\|_{\mathfrak{H}_{p}}\le
C_{p}\|f_{n}\|_{2}$. Hence, there exists a further subsequence, not relabeled,
along which $\CF_{p}[f_{n}]\weak F$ in $H^{2}(\CR)$. But, since $H^{2}(\CR)$
is a reproducing kernel Hilbert space, weak convergence implies pointwise
convergence, showing that $\CF_{p}[f]=F\in H$. We conclude that
$f\in\mathfrak{H}_{p}$, and the theorem is now proved.
\end{proof}
We emphasize that inequality (\ref{HLB}) is valid only for all
$f\in\mathfrak{C}_{2}$. It does not hold for
$f\in\mathfrak{X}=\mathfrak{C}_{2}-\mathfrak{C}_{2}$. In fact, our next theorem
establishes the reverse inequality.
\begin{theorem}
For every $p\ge 1$
\begin{equation}
  \label{HUB}
  \|f\|_{2}\le2\sqrt{\frac{2\pi}{p}}\|f\|_{\mathfrak{H}_{p}}
\end{equation}
for every $f\in\mathfrak{X}$ (every $f\in\mathfrak{X}\cap\mathfrak{H}_{1}$, if $p=1$).
\end{theorem}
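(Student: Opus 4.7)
The plan is to invert the transformation $\CF_p$ and reduce the inequality to the already-established $L^{2}$-to-Hardy bound of Lemma~\ref{lem:HL2}. Since the inequality is trivial when $\|f\|_{\mathfrak{H}_{p}}=\infty$, we may restrict to $f\in\mathfrak{X}\cap\mathfrak{H}_{p}$ and set $g=\CF_{p}[f]\in H$. Solving (\ref{Hrep}) with $z=w^{p}$ (valid in the sector $|\arg w|<\pi/(2p)$, which for $p\ge 1$ contains the positive real axis) produces the pointwise identity
\[
f(w)=w^{(p-1)/2}(w+1)\,g(w^{p}),\qquad w\in(0,1).
\]
Note that $g\in H$ — not merely $H^{2}(\CR)$ — because the factors $z^{(p-1)/(2p)}$ and $z^{1/p}+1$ satisfy the conjugation symmetry $\overline{h(z)}=h(\bar z)$ (principal branches) and $f\in\mathfrak{X}$ is real, so $\CF_{p}[f]$ inherits the symmetry required for membership in $H$.

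Next, I would substitute this identity into $\|f\|_{2}^{2}$ and perform the change of variables $u=x^{p}$, which absorbs the $x^{p-1}$ weight coming from $w^{(p-1)/2}$:
\[
\|f\|_{2}^{2}=\int_{0}^{1}x^{p-1}(x+1)^{2}|g(x^{p})|^{2}\,dx
=\frac{1}{p}\int_{0}^{1}(u^{1/p}+1)^{2}|g(u)|^{2}\,du.
\]
For $u\in(0,1)$ and $p\ge 1$ one has $u^{1/p}+1\le 2$, so this is at most $\dfrac{4}{p}\|g\|_{2}^{2}$.

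Finally, I would apply Lemma~\ref{lem:HL2} to $g\in H$, which gives $\|g\|_{2}^{2}\le\pi\|g\|^{2}=\pi\|f\|_{\mathfrak{H}_{p}}^{2}$, and conclude
\[
\|f\|_{2}^{2}\le\frac{4\pi}{p}\|f\|_{\mathfrak{H}_{p}}^{2},
\]
which in fact yields the slightly better constant $2\sqrt{\pi/p}$; the claimed constant $2\sqrt{2\pi/p}$ is certainly sufficient. The case $p=1$ is handled identically: the identity simplifies to $f(x)=(x+1)g(x)$ on $(0,1)$, and the estimate $(x+1)^{2}\le 4$ followed by Lemma~\ref{lem:HL2} gives the same bound. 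I do not foresee any real obstacle: the only nontrivial ingredient is the verification that $g\in H$, and this is automatic from $f\in\mathfrak{H}_{p}$ together with the reality of $f$.
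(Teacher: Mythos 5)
Your proof is correct and in fact yields the sharper constant $2\sqrt{\pi/p}$. The place where it genuinely departs from the paper is in the estimate relating $\|\CF_{p}[f]\|_{L^{2}(0,1)}$ to $\|\CF_{p}[f]\|=\|f\|_{\mathfrak{H}_{p}}$. The paper establishes $\|\CF_{p}[f]\|_{L^{2}(0,1)}^{2}\le 2\pi\|f\|_{\mathfrak{H}_{p}}^{2}$ by applying Cauchy's theorem to $(\CF_{p}[f])(z)^{2}$ on the rectangle $[0,1]\times[0,L]$, using the decay of $\CF_{p}[f]$ along horizontal lines (established via the CMF estimate (\ref{fzest})) and the monotonicity of $\int_{\bb{R}}|F(x+iy)|^{2}\,dy$ for Hardy functions. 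You instead observe that $\CF_{p}[f]\in H$ by the very definition (\ref{Hardy}) of $\mathfrak{H}_{p}$, so Lemma~\ref{lem:HL2} applies directly and gives $\|\CF_{p}[f]\|_{L^{2}(0,1)}^{2}\le\pi\|f\|_{\mathfrak{H}_{p}}^{2}$, a factor of $2$ better. The remaining computations are mirror images of each other: the paper uses the substitution $u=x^{1/p}$ to write $\|\CF_{p}[f]\|_{L^{2}(0,1)}^{2}=p\int_{0}^{1}f(u)^{2}(u+1)^{-2}du$ and bounds $(u+1)^{-2}\ge\frac{1}{4}$, whereas you invert $\CF_{p}$, substitute $u=x^{p}$, and bound $(u^{1/p}+1)^{2}\le 4$; both yield $\|f\|_{2}^{2}\le\frac{4}{p}\|\CF_{p}[f]\|_{L^{2}(0,1)}^{2}$. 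Your route is more elementary (no contour integration, no monotonicity of Hardy integrals) and shows the constant in (\ref{HUB}) can be improved. One small remark: your paragraph verifying $g=\CF_{p}[f]\in H$ via the conjugation symmetry of the weight factors is harmless but unnecessary, since $\CF_{p}[f]\in H$ is built into the definition of $\mathfrak{H}_{p}$.
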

\begin{proof}
  To prove this theorem, we use the analyticity of $(\CF_{p}[f])(z)$ in the right
  half-plane. Let $\GG_{L}$ be the boundary of the rectangle
  $[0,1]\times[0,L]$ traversed in the positive direction. We first observe
  that similarly to (\ref{fzest}), we can estimate 
\[
|f_{\Gs}((x+iL)^{1/p})|\le
f_{|\Gs|}\left(L^{1/p}a_{p}\right)\le f_{|\Gs|}(a_{p}).
\]
We conclude that 
\[
\lim_{L\to\infty}\int_{0}^{1}|(\CF_{p}[f_{\Gs}])(x+iL)|^{2}dx=0,
\]
and using the Cauchy theorem $\int_{\GG_{L}}(\CF_{p}[f_{\Gs}])(z)^{2}dz=0$, we obtain the formula
\[
\|\CF_{p}[f_{\Gs}]\|_{2}^{2}=\int_{0}^{1}(\CF_{p}[f_{\Gs}])(x)^{2}dx=\int_{0}^{\infty}(\CF_{p}[f_{\Gs}])(iy)^{2}idy
-\int_{0}^{\infty}(\CF_{p}[f_{\Gs}])(1+iy)^{2}idy.
\]
By the symmetry of CMFs, we have
$\bra{(\CF_{p}[f_{\Gs}])(z)}=(\CF_{p}[f_{\Gs}])(\bra{z})$. Therefore, we obtain the inequality
\begin{multline*}
  \|\CF_{p}[f_{\Gs}]\|_{2}^{2}\le\hf\int_{\bb{R}}|(\CF_{p}[f_{\Gs}])(iy)|^{2}dy+\hf\int_{\bb{R}}|(\CF_{p}[f_{\Gs}])(1+iy)|^{2}dy\\
  \le\int_{\bb{R}}|(\CF_{p}[f_{\Gs}])(iy)|^{2}dy=2\pi\|f_{\Gs}\|_{\mathfrak{H}_{p}}^{2}, 
\end{multline*}
where we used the property of Hardy functions that $\int_{\bb{R}}|F(x+iy)|^{2}dy$ is a
non-increasing function of $x$. Finally, changing
variable $u=x^{1/p}$, we estimate
\[
\|\CF_{p}[f_{\Gs}]\|_{2}^{2}=\int_{0}^{1}(\CF_{p}[f_{\Gs}])(x)^{2}dx=p\int_{0}^{1}\frac{f_{\Gs}(u)^{2}}{(u+1)^{2}}du\ge
\frac{p}{4}\|f_{\Gs}\|_{2}^{2}.
\]
\end{proof}
Now, in reference to the $\|\cdot\|_{\mathfrak{H}_{p}}$ norm, we can define the
$\phi_{p}$-problem by analogy with the $\phi$-problem (\ref{phiproblem}):
\begin{equation}
  \label{phip}
  \GD^{x_{0}}_{p}(\Ge)=\sup_{\phi\in\CA^{p}_{\Ge}}\phi(x_{0}),\quad
\CA_{\Ge}^{p}=\{\phi\in\mathfrak{H}_{p}:\|\phi\|_{\mathfrak{H}_{p}}\le 1,\|\phi\|_{2}\le\Ge\}.
\end{equation}


\section{The relations between $(f,g)$, $\phi$ and $\phi_{p}$-problems}
\setcounter{equation}{0} 
In this section, we are going to examine the relations between the
$(f,g)$, $\phi$ and $\phi_{p}$ problems, given by (\ref{fgproblem}), (\ref{phiproblem}),
and (\ref{phip}), respectively, with the goal of establishing (\ref{goal}),
thereby proving Theorem~\ref{th:main}.

Let $p>1$, and let $\psi^{(n)}_{\Ge}\in \mathfrak{H}_{p}$ be a
maximizing sequence in the $\phi_{p}$-problem (\ref{phip}). Define
$\phi^{(n)}_{\Ge}=\CF_{p}[\psi^{(n)}_{\Ge}]\in H$. Then
$\|\phi_{\Ge}^{(n)}\|=\|\psi_{\Ge}^{(n)}\|_{\mathfrak{H}_{p}}\le 1$, while 
\[
  \|\phi^{(n)}_{\Ge}\|_{2}^{2}=\int_{0}^{1}\frac{|\psi^{(n)}_{\Ge}(x^{1/p})|^{2}}{x^{(p-1)/p}(1+x^{1/p})^{2}}dx=p\int_{0}^{1}\frac{|\psi^{(n)}_{\Ge}(u)|^{2}}{(1+u)^{2}}du\le
  p\|\psi^{(n)}_{\Ge}\|_{2}^{2}\le p\Ge^{2}.
\]
Thus, $\phi^{(n)}_{\Ge}/\sqrt{p}$ is a valid test function for the $\phi$-problem, for every
$n\ge 1$, where $x_{0}$ was replaced by $x_{0}^{p}$. Therefore,
\begin{equation}
  \label{pstUB}
  \GD_{*}^{x_{0}^{p}}(\Ge)\ge\frac{\phi^{(n)}_{\Ge}(x_{0}^{p})}{\sqrt{p}}=
\frac{\psi^{(n)}_{\Ge}(x_{0})}{\sqrt{p}x_{0}^{(p-1)/2}(1+x_{0})}\to
\frac{\GD^{x_{0}}_{p}(\Ge)}{\sqrt{p}x_{0}^{(p-1)/2}(1+x_{0})},\text{ as }n\to\infty.
\end{equation}
Now, let $(f_{\Ge},g_{\Ge})$ be the solution of the $(f,g)$-problem. Define
$\phi_{\Ge}(x)=\GD[f_{\Ge},g_{\Ge}](x)$ (see (\ref{GDfg}) for notation). Then, $\|\phi_{\Ge}\|_{2}\le\Ge$, and by Theorem~\ref{th:HLB}
\[
\|\phi_{\Ge}\|_{\mathfrak{H}_{p}}\le\frac{\|f_{\Ge}\|_{\mathfrak{H}_{p}}+\|g_{\Ge}\|_{\mathfrak{H}_{p}}}
{\|f_{\Ge}\|_{2}+\|g_{\Ge}\|_{2}}\le C_{p}.
\]
Thus, $\phi_{\Ge}/(C_{p}+1)$ is a valid test function in the
$\phi_{p}$-problem for any $p>1$. Therefore,
\begin{equation}
  \label{gammaLB}
\GD^{x_{0}}_{p}(\Ge)\ge\frac{\phi_{\Ge}(x_{0})}{C_{p}+1}=\frac{\GD[f_{\Ge},g_{\Ge}](x_{0})}{C_{p}+1}=
\frac{\GD^{x_{0}}(\Ge)}{C_{p}+1}.
\end{equation}

An essential benefit of using the Hardy norm $\|\cdot\|$ is that it permits a controlled
split of functions $\phi\in H$ into the difference of two CMFs. Here
is the construction. By Lemma~\ref{lem:Hrep}, if $\phi\in H$, then there is a
unique $\Gs\in L^{2}(0,\infty)$, such that
\[
\phi(z)=\int_{0}^{\infty}e^{-zt}\Gs(t)dt,\quad\Re z>0.
\]
Let $\Gs_{+}(t)=\max\{0,\Gs(t)\}$, $\Gs_{-}(t)=\max\{0,-\Gs(t)\}$.
Then, we define
\[
\phi_{\pm}(z)=\int_{0}^{\infty}e^{-zt}\Gs_{\pm}(t)dt,\quad\Re z>0.
\]
In this construction 
\[
\int_{0}^{\infty}\Gs_{+}(t)\Gs_{-}(t)dt=0.
\]
Therefore, by Plancherel's identity
\[
\int_{\bb{R}}\phi_{+}(iy)\bra{\phi_{-}(iy)}dy=0.
\]
But then
\[
\|\phi\|^{2}=\nth{2\pi}\int_{\bb{R}}|\phi_{+}(iy)-\phi_{-}(iy)|^{2}dy=\|\phi_{+}\|^{2}+
\|\phi_{-}\|^{2}\ge\nth{4}(\|\phi_{+}\|+
\|\phi_{-}\|)^{2},
\]
which shows that
\[
\|\phi\|\le\|\phi_{+}\|+\|\phi_{-}\|\le2\|\phi\|.
\]

In order to complete the circle of inequalities, we take $\phi_{\Ge}$ to be the
solution of the $\phi$-problem
and define $f=\phi_{\Ge}^{+}$, $g=\phi_{\Ge}^{-}$. We then have, using Lemma~\ref{lem:HL2},
\[
  |\GD[f,g](x_{0})|\ge\frac{|\phi_{\Ge}(x_{0})|}{\|\phi_{\Ge}^{+}\|_{2}+\|\phi_{\Ge}^{-}\|_{2}}\ge
  \frac{|\phi_{\Ge}(x_{0})|}{\sqrt{\pi}(\|\phi_{\Ge}^{+}\|+\|\phi_{\Ge}^{-}\|)}\ge
  \frac{|\phi_{\Ge}(x_{0})|}{2\sqrt{\pi}\|\phi_{\Ge}\|}\ge\frac{|\phi_{\Ge}(x_{0})|}{2\sqrt{\pi}}=\frac{\GD^{x_{0}}_{*}(\Ge)}{2\sqrt{\pi}}.
\]
We also estimate
\[
  \|\GD[f,g](x)\|_{2}=\frac{\|\phi_{\Ge}\|_{2}}{\|\phi_{\Ge}^{+}\|_{2}+\|\phi_{\Ge}^{-}\|_{2}}
  \le\frac{C_{p}\Ge}{\|\phi_{\Ge}^{+}\|_{\mathfrak{H}_{p}}+\|\phi_{\Ge}^{-}\|_{\mathfrak{H}_{p}}}\le
  \frac{C_{p}\Ge}{\|\phi_{\Ge}\|_{\mathfrak{H}_{p}}}.
\]
To complete the circle of inequalities, we need the following theorem.
\begin{theorem}
\label{th:phiHp}
For any $x_0 \ge 1$ and $p>1$, there exists $c_{p}(x_{0})>0$, such that
\begin{equation}
  \label{philb}
\|\phi_{\Ge}\|_{\mathfrak{H}_{p}} \ge c_{p}(x_{0})\Ge^{1-\frac{1}{p}}  
\end{equation}
for all sufficiently small $\Ge$.
\end{theorem}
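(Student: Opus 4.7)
The plan is to exploit the reproducing-kernel structure of $H^{2}(\CR)$, applied to $\CF_{p}[\phi_{\Ge}]$, in order to bound $\|\phi_{\Ge}\|_{\mathfrak{H}_{p}}$ from below by the value of $\phi_{\Ge}$ at a well-chosen real point $w_{0}>1$, and then to use the explicit representation $\phi_{\Ge}=\Ge\psi_{\Gve}/\|\psi_{\Gve}\|_{2}$ from (\ref{phipsi}) together with the sharp asymptotics in Theorems~\ref{th:psiyp} and~\ref{th:psi1} to estimate $|\phi_{\Ge}(w_{0})|$ directly.

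First I will establish the standard Hardy-space pointwise bound $|F(\zeta)|\le\|F\|/\sqrt{4\pi\,\Re\zeta}$ for $F\in H^{2}(\CR)$ and $\Re\zeta>0$, a consequence of the Cauchy formula on the right half-plane combined with the convention $\|F\|^{2}=\frac{1}{2\pi}\int_{\bb{R}}|F(iy)|^{2}dy$. Applying it to $F=\CF_{p}[\phi_{\Ge}]$ at $\zeta=w^{p}$ for real $w>0$, and using the definition (\ref{Hrep}) of $\CF_{p}$, this translates to
\[
\|\phi_{\Ge}\|_{\mathfrak{H}_{p}}\ge\frac{2\sqrt{\pi w}\,|\phi_{\Ge}(w)|}{w+1},\qquad w>0.
\]
I will then fix any real $w_{0}>1/a_{p}$ with $a_{p}=\cos(\pi/(2p))$; this ensures $w_{0}\in\GO$ and $\Ga(w_{0})=\arccos(1/w_{0})>\pi/(2p)$ strictly, and reduces the theorem to a lower bound on $|\phi_{\Ge}(w_{0})|$.

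For $x_{0}>1$, Theorem~\ref{th:psiyp}(i)(ii) gives $\psi_{\Gve}(w_{0})\sim c\,\hat{\Gve}^{-2\Gb(w_{0})}$ and $\|\psi_{\Gve}\|_{2}\sim c'\hat{\Gve}^{-\Gb(x_{0})}$, while (\ref{GeGve}) combined with Theorem~\ref{th:psiyp}(ii)(iii) yields $\Ge\sim c''\Gve$. The cancellation $\Gb(x_{0})-2\Gb(w_{0})=-2\Ga(w_{0})/\pi$ then produces
\[
|\phi_{\Ge}(w_{0})|=\frac{\Ge\,|\psi_{\Gve}(w_{0})|}{\|\psi_{\Gve}\|_{2}}\sim C\,\Ge^{1-2\Ga(w_{0})/\pi}.
\]
Since $2\Ga(w_{0})/\pi>1/p$ by choice of $w_{0}$, this exponent is strictly less than $1-1/p$, and for $\Ge\in(0,1)$ sufficiently small, the resulting bound exceeds $c\,\Ge^{1-1/p}$.

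The main technical obstacle is the case $x_{0}=1$, where the asymptotics of $\psi_{\Gve}$ and $\|\psi_{\Gve}\|_{2}$ come with polylogarithmic corrections. Using that $u(1;\mu)\equiv 1$, I will apply a Laplace-type analysis to $\psi_{\Gve}(w_{0})=\int_{0}^{\infty}u(w_{0};\mu)\mu\tanh(\pi\mu)/(2\hat{\Gve}^{2}\cosh(\pi\mu)+1)\,d\mu$: compared to the $x_{0}>1$ case, the integrand carries one extra factor of $\sqrt{\mu}$ (from the missing $1/\sqrt{2\pi\mu}$ in $u(x_{0};\mu)$), and an adaptation of Lemma~\ref{lem:intasymp} in the style of the proof of Theorem~\ref{th:psi1} yields $\psi_{\Gve}(w_{0})\sim c\,\hat{\Gve}^{-2\Gb(w_{0})}\sqrt{|\ln\Gve|}$, with $\Gb(w_{0})=\Ga(w_{0})/\pi$. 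Combined with Theorem~\ref{th:psi1}(i), which gives $\|\psi_{\Gve}\|_{2}\sim c|\ln\Gve|$, and the resulting relation $\Ge\sim\Gve\sqrt{|\ln\Gve|}$ (so that $\hat{\Gve}\sim\Ge/\sqrt{|\ln\Ge|}$), one obtains
\[
|\phi_{\Ge}(w_{0})|\sim c\,\Ge^{1-2\Gb(w_{0})}|\ln\Ge|^{\Gb(w_{0})-1/2}.
\]
Because $\Gb(w_{0})>1/(2p)$ strictly, the polynomial prefactor $\Ge^{1/p-2\Gb(w_{0})}$ blows up as $\Ge\to 0^{+}$ and dominates any polylogarithmic correction, so $|\phi_{\Ge}(w_{0})|\ge c\,\Ge^{1-1/p}$ for small $\Ge$. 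Carefully tracking these logarithmic factors through the chain of asymptotic identities is the principal delicacy of the argument.
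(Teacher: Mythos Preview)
Your argument is correct and takes a genuinely different route from the paper.  The paper bounds $\|\phi_{\Ge}\|_{\mathfrak{H}_{p}}=\|\psi_{\Gve}\|_{\mathfrak{H}_{p}}/\|\psi_{\Gve}\|$ by estimating the \emph{full} Hardy norm $\|\psi_{\Gve}\|_{\mathfrak{H}_{p}}$: it integrates the pointwise asymptotics of $|\psi_{\Gve}((iy)^{1/p})|^{2}$ over the whole boundary ray, invoking Fatou's lemma together with a geometric lemma asserting $\Re\Ga((iy)^{1/p})>\pi/(2p)$ uniformly in $y>0$.  You instead use the reproducing-kernel inequality in $H^{2}(\CR)$ to reduce everything to a \emph{single} point evaluation of $\CF_{p}[\phi_{\Ge}]$ at a real $w_{0}^{p}$ with $w_{0}>1/\cos(\pi/(2p))$, so that the required strict inequality $\Ga(w_{0})>\pi/(2p)$ is trivially arranged by choice of $w_{0}$.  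This sidesteps both the Fatou step and the uniform-in-$y$ lemma, and is arguably more elementary.  (Your constant in the reproducing-kernel bound is off---with the paper's normalization one gets $|F(\zeta)|\le\|F\|/\sqrt{2\Re\zeta}$---but this is immaterial.)  For $x_{0}=1$ both approaches need the same ingredient, namely the asymptotics of $\psi_{\Gve}(z)$ at a point $z\ne 1$; the paper proves this for all $z\in\GO$, whereas you only need it at one real $w_{0}>1$, which makes the Laplace-type computation you outline slightly lighter.  What the paper's approach buys in return is a sharper lower bound (matching the actual exponent $\Gve^{-1/p}$ for $\|\psi_{\Gve}\|_{\mathfrak{H}_{p}}$ rather than a strictly better power), but since only the comparison with $\Ge^{1-1/p}$ is needed, your slack is harmless.
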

The proof is in Appendix~\ref{app:phiHp}. It is based on the fact that the
solution $\phi_{\Ge}$ of the $\phi$-problem, given by (\ref{phipsi}) and
(\ref{phirep}), is expressed in terms of the explicitly known eigenfunctions
$u(x;\mu)$, given by (\ref{uofx}), of the integral operator $K$.

We can now complete the circle of inequalities and prove (\ref{goal}). 
According to Theorem~\ref{th:phiHp}, $(\phi_{\Ge}^{+},\phi_{\Ge}^{-})$
is an admissible pair for the $(f,g)$-problem, where $\Ge$ is
replaced with $(C_{p}/c_{p}(x_{0}))\Ge^{\frac{1}{p}}$, permitting us to conclude that
$\overline{\Gg}(x_{0})/p\le\Gg_{*}(x_{0})$,
where
\[
\underline{\Gg}(x_{0})=\limi_{\Ge\to 0}\frac{\ln\GD^{x_{0}}(\Ge)}{\ln\Ge},\quad
\overline{\Gg}(x_{0})=\lims_{\Ge\to 0}\frac{\ln\GD^{x_{0}}(\Ge)}{\ln\Ge}.
\]
Combining this inequality with inequalities (\ref{pstUB}) and (\ref{gammaLB}), we get
\begin{equation}
  \label{chain}
  \Gg_{*}(x_{0}^{p})\le\underline{\Gg_{p}}(x_{0})\le\overline{\Gg_{p}}(x_{0})\le
\overline{\Gg}(x_{0})\le p\Gg_{*}(x_{0}),
\end{equation}
where
\[
\underline{\Gg_{p}}(x_{0})=\limi_{\Ge\to 0}\frac{\ln\GD_{p}^{x_{0}}(\Ge)}{\ln\Ge},\quad
\overline{\Gg_{p}}(x_{0})=\lims_{\Ge\to 0}\frac{\ln\GD_{p}^{x_{0}}(\Ge)}{\ln\Ge}.
\]
The explicit form of $\Gg_{*}(x_{0})$, given by (\ref{gammastar}) implies that
$\Gg_{*}(x_{0})$ is a continuous function of $x_{0}$. Then, passing to the limit as
$p\to 1^{+}$ in (\ref{chain}), we obtain the existence of the limits
\[
\lim_{p\to1^{+}}\underline{\Gg_{p}}(x_{0})=\lim_{p\to1^{+}}\overline{\Gg_{p}}(x_{0})=\Gg_{*}(x_{0}).
\]
Inequality (\ref{gammaLB}) then implies that
\[
\underline{\Gg_{p}}(x_{0})\le\underline{\Gg}(x_{0})\le\overline{\Gg}(x_{0})\le p\Gg_{*}(x_{0}).
\]
Passing to the limit in this inequality as $p\to 1^{+}$ proves the existence of the limit
\[
\Gg(x_{0})=\lim_{\Ge\to 0}\frac{\ln\GD^{x_{0}}(\Ge)}{\ln\Ge},
\]
as well as the desired equality (\ref{goal}).


\section{The local problem}
\setcounter{equation}{0} 
\label{sec:local}
Suppose $f_{0}\in\mathfrak{C}_{2}$ is given, as well as $x_{0}\ge 1$. Let
\[
\CK_{\Ge}[f_{0}]=\{f\in\mathfrak{C}_{2}:\|f-f_{0}\|_{2}\le\Ge\}.
\]
We note that $\CK_{\Ge}[f_{0}]$ is a convex set.
The goal is to compute 
\begin{equation}
  \label{localf}
  M_{\Ge}(x_{0};f_{0})=\max_{f\in \CK_{\Ge}[f_{0}]}f(x_{0}),\qquad
m_{\Ge}(x_{0};f_{0})=\min_{f\in \CK_{\Ge}[f_{0}]}f(x_{0}).
\end{equation}
While the Kuhn-Tucker theorem is applicable to the
local problem (\ref{localf}) and leads to optimality conditions that are easy
to check numerically, they are not very useful as a guide for finding
the extremals in (\ref{localf}).

For this reason, we forgo the details of the Kuhn-Tucker-based analysis and opt
instead for the direct variational approach due to Caprini
\cite{capr74,capr80,capr81}, which is narrower in scope than
Kuhn-Tucker, but leads directly to a natural algorithm for computing the extremals in
(\ref{localf}) approximately. The method is applicable for the minimization of
general positive definite quadratic functionals, and necessitates the dual
reformulation of the variational problems (\ref{localf}). Given $f_{0}\in\mathfrak{C}_{2}$ and
$\Gd\in(-\Gd_{-},\Gd_{+})$, for some small $\Gd_{\pm}>0$, we seek to solve
\begin{equation}
  \label{Caprmin}
  \min_{\myatop{f\in\mathfrak{C}_{2}}{f(x_{0})-f_{0}(x_{0})=\Gd}}\|f-f_{0}\|_{2}^{2}.
\end{equation}
Suppose that
$f_{\Gs_{*}}$ satisfies the constraint $f_{\Gs_{*}}(x_{0})-f_{0}(x_{0})=\Gd$
and minimizes the functional $J[\Gs]=\|f_{\Gs}-f_{0}\|_{2}^{2}$.
The Caprini method is based on the following representation of the variation
$\GD J=J[\Gs]-J[\Gs_{*}]\ge 0$:
\begin{equation}
  \label{Caprep}
  \GD J=\|f_{\Gs}-f_{0}\|_{2}^{2}-\|f_{\Gs_{*}}-f_{0}\|_{2}^{2}=2\int_{0}^{\infty}C(t)d\GD\Gs(t)+\|f_{\GD\Gs}\|_{2}^{2}, 
\end{equation}
where $\GD\Gs=\Gs-\Gs_{*}$, and
\begin{equation}
  \label{Caprfn}
  C(t)=(\GL f_{\Gs_{*}})(t)-(\GL f_{0})(t)=\int_{0}^{1}e^{-xt}(f_{\Gs_{*}}(x)-f_{0}(x))dx
\end{equation}
is the Caprini function. 
\begin{theorem}
  The minimizer $\Gs_{*}$ in (\ref{Caprmin}) exists and is unique and has
  either a finite support or a countable support $\{t_{n}:n\ge 1\}$ with
\begin{equation}
  \label{MSz}
 \sum_{n=1}^{\infty}\nth{t_{n}}<\infty.
\end{equation}
In either case
\begin{equation}
  \label{Coptim}
C(t)\ge\frac{e^{-x_{0}t}}{f_{0}(x_{0})+\Gd}\int_{0}^{1}f_{\Gs_{*}}(x)(f_{\Gs_{*}}(x)-f_{0}(x))dx,
\quad t\ge 0,
\end{equation}
with equality at all $t=t_{n}$ in the support of $\Gs_{*}$. Conversely, if
$\Gs_{*}$ is a positive measure, whose support $\{t_{n}:n\ge 1\}$ satisfies
(\ref{MSz}), and is such that (\ref{Coptim}) holds, then it is a minimizer in
(\ref{Caprmin}), provided $\Gs_{*}\not=\Gs_{0}$, where $f_{\Gs_{0}}=f_{0}$.
\end{theorem}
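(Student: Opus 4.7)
The plan divides into three parts: existence and uniqueness of $\Gs_*$, the necessary condition (\ref{Coptim}), and the converse together with the support structure. Throughout I write $K:=\int_0^1 f_{\Gs_*}(x)(f_{\Gs_*}(x)-f_0(x))\,dx$ for the constant appearing on the right of (\ref{Coptim}).

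For existence, take a minimizing sequence $\{\Gs_n\}$ of admissible positive measures. Since $\|f_{\Gs_n}\|_2$ stays bounded along the sequence, Lemma~\ref{lem:L2star} gives a uniform bound on $\|\Gs_n\|_*$, so $\{\Gs_n\}$ is bounded in $X^*$ and Banach--Alaoglu yields a weak-$*$ convergent subsequence $\Gs_n\wk{*}\Gs_*$ with $\Gs_*$ positive. The constraint $f_{\Gs_n}(x_0)=f_0(x_0)+\Gd$ passes to the limit because $e^{-x_0 t}\in X$, and exactly as in Section~3 pointwise convergence combined with weak $L^2(0,1)$ precompactness yields $f_{\Gs_n}\weak f_{\Gs_*}$ in $L^2(0,1)$, so the lower semicontinuity of $\|\cdot\|_2$ gives attainment. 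Uniqueness follows from strict convexity of $\|\cdot\|_2^2$ together with injectivity of the Laplace transform on positive measures: two distinct minimizers would give distinct $f$'s whose admissible midpoint would strictly decrease the objective.

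For the necessary condition, the key observation is that the constraint $\int e^{-x_0 t}d\Gs=f_0(x_0)+\Gd$ is linear in $\Gs$, so for any admissible positive $\Gs$ the convex combination $\Gs_\lambda=(1-\lambda)\Gs_*+\lambda\Gs$ is admissible for $\lambda\in[0,1]$. Applying (\ref{Caprep}) with $\GD\Gs=\lambda(\Gs-\Gs_*)$, dividing by $\lambda$, and letting $\lambda\to 0^+$ produces the first-order condition $\int_0^\infty C(t)\,d(\Gs-\Gs_*)(t)\ge 0$. A Fubini computation applied to (\ref{Caprfn}) gives $\int C\,d\Gs_*=K$, so $\Gs_*$ solves the semi-infinite linear program of minimizing $\Gs\mapsto\int C\,d\Gs$ over positive measures with $\int e^{-x_0 t}\,d\Gs=f_0(x_0)+\Gd$. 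Testing against the rescaled atom $\Gs=(f_0(x_0)+\Gd)e^{x_0 s}\Gd_s$ for each $s\ge 0$ forces $C(s)\ge Ke^{-x_0 s}/(f_0(x_0)+\Gd)$, which is exactly (\ref{Coptim}); equality on $\mathrm{supp}\,\Gs_*$ is then automatic since $\int[C(t)-Ke^{-x_0 t}/(f_0(x_0)+\Gd)]\,d\Gs_*=0$ with nonnegative integrand.

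For the converse, integrating (\ref{Coptim}) against an arbitrary admissible $\Gs$ yields $\int C\,d\Gs\ge K$, while equality holds at $\Gs_*$, and (\ref{Caprep}) then reduces to $\GD J\ge\|f_{\Gs-\Gs_*}\|_2^2\ge 0$. For the support structure, the function
\[
G(t)=C(t)-\frac{K}{f_0(x_0)+\Gd}e^{-x_0 t}
\]
extends to an entire function of exponential type $x_0$ (since $C$ is the Laplace transform of the $L^2$ function $f_{\Gs_*}-f_0$ supported on $[0,1]$), so its zeros on $[0,\infty)$ are isolated and, as those of a nonnegative analytic function, of even order. Thus $\mathrm{supp}\,\Gs_*$ is either finite or a sequence $t_n\to\infty$. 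The summability $\sum_n 1/t_n<\infty$ is the main technical obstacle: I would deduce it from a Hadamard-type factorization of $G$, combining the exponential type $x_0$ with the fact that $C\in H^2(\CR)$ decays on the positive real axis, which forces a Blaschke-type density bound on the real zeros.
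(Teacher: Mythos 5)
Your existence and uniqueness arguments and your derivation of the necessary condition are sound; in fact the convex-combination route (perturbing along $\Gs_\lambda=(1-\lambda)\Gs_*+\lambda\Gs$, then testing the resulting linear program against rescaled atoms) is a clean alternative to the paper's mass-transfer argument, and it yields (\ref{Coptim}) correctly.

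However, your proposed route to the summability (\ref{MSz}) has a genuine gap. Knowing that $G(t)=C(t)-\tfrac{K}{f_0(x_0)+\Gd}e^{-x_0 t}$ is entire of exponential type with $G\ge 0$ on $[0,\infty)$ does \emph{not} force $\sum 1/t_n<\infty$ for its real zeros: $\sin(\pi t)$ is entire of exponential type $\pi$ with zeros at the integers, for which the sum diverges, and the Blaschke density condition is vacuous for real zeros. Cartwright/Lindel\"of-type counting bounds give at best $n(r)=O(r)$, which is compatible with a divergent sum. The paper's argument runs in the opposite logical direction: rather than deriving summability from analyticity, it \emph{assumes} $\sum 1/t_n=\infty$ and invokes the M\"untz--Sz\'asz theorem to conclude that $\{e^{-xt_n}\}$ is complete in $C_0(0,\infty)$; the equality $C(t_n)=m\,e^{-x_0 t_n}$ on the support then forces $\int_0^1 g(x)(f_*(x)-f_0(x))\,dx=m\,g(x_0)$ for all $g\in C_0(0,\infty)$, and a delta-sequence at $a\in(0,1)$ gives $f_*\equiv f_0$, contradicting $\Gd\neq 0$. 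You should replace the Hadamard sketch with this contradiction argument.

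A smaller omission: in the converse you use ``equality holds at $\Gs_*$'' to conclude $\GD J\ge 0$, but that step tacitly assumes $f_{\Gs_*}(x_0)=f_0(x_0)+\Gd$, which is not part of the hypothesis. You need to show that (\ref{Coptim}) with equality on the support \emph{implies} the constraint, and the derivation $K=\tfrac{K\,f_*(x_0)}{f_0(x_0)+\Gd}$ only gives this when $K\neq 0$. The degenerate case $K=0$ is exactly where one must use (\ref{Coptim}) to deduce $\int_0^1 f_0(f_*-f_0)\,dx\ge 0$ and hence $\|f_*-f_0\|_2=0$, i.e.\ $\Gs_*=\Gs_0$; this is the content of the proviso $\Gs_*\neq\Gs_0$ in the statement, and it should be addressed explicitly.
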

\begin{proof}
To prove existence, we let $\Gs_{n}$ be a minimizing sequence. Then the
boundedness of $\|f_{\Gs_{n}}\|_{2}$ implies the boundedness of
$\|\Gs_{n}\|_{*}$, according to Lemma~\ref{lem:L2star}. Hence, we can extract
a subsequence, not relabeled, such that $f_{\Gs_{n}}\weak f_{*}$ in
$L^{2}(0,1)$ and $\Gs_{n}\wk{*}\Gs_{*}$ in $X^{*}$, where $X$ is given by
(\ref{X}). Then $f_{\Gs_{n}}(x)\to f_{\Gs_{*}}(x)$ for all $x>0$
since $e^{-xt}\in X$ for all $x>0$. We conclude that $f_{*}=f_{\Gs_{*}}$, and
that $f_{\Gs_{*}}(x_{0})-f_{0}(x_{0})=\Gd$. The weak lower semicontinuity of
the $L^{2}(0,1)$ norm implies that 
\[
\|f_{*}-f_{0}\|_{2}\le\limi_{n\to\infty}\|f_{\Gs_{n}}-f_{0}\|_{2}=
\min_{\myatop{f\in\mathfrak{C}_{2}}{f(x_{0})-f_{0}(x_{0})=\Gd}}\|f-f_{0}\|_{2}^{2}.
\]
The uniqueness of the minimizer follows from the convexity of the constraint
and the strict uniform convexity of the $L^{2}(0,1)$ norm.

Now, let $\Gs_{*}$ be the minimizer in (\ref{Caprmin}).
Assume first that $\Gs_{*}$ has a point mass
at $t_{*}$. Then, we remove $\Ge\Gd_{t_{*}}(t)$ from $\Gs_{*}$,
while placing the mass $\Ge e^{x_{0}(t_{0}-t_{*})}$ at $t_{0}$,
preserving the constraint. In that case
\[
\GD J=2\Ge (e^{x_{0}(t_{0}-t_{*})}C(t_{0})-C(t_{*}))+O(\Ge^{2})\ge 0,
\]
and therefore, $e^{x_{0}t_{0}}C(t_{0})\ge e^{x_{0}t_{*}}C(t_{*})$. Hence, any
point mass $t_{*}$ in the support of $\Gs_{*}$ must be a point of global
minimum of $e^{x_{0}t}C(t)$ on $[0,+\infty)$. If $t_{*}$ is in the support of $\Gs_{*}$, but is
not a point mass, then $m(\Ge)=\Gs_{*}((t_{*}-\Ge)^{+},t_{*}+\Ge))\to 0$, as
$\Ge\to 0$, while $m(\Ge)>0$ for any $\Ge>0$. In that case, we remove
$\Gs_{*}|((t_{*}-\Ge)^{+},t_{*}+\Ge)$ from $\Gs_{*}$ and place the appropriate
mass $m(\Ge)e^{x_{0}(t_{0}-t_{*})}$ at $t_{0}$, so as to maintain the
constraint. This time, we obtain
\[
\GD J=m(\Ge)(e^{x_{0}(t_{0}-t_{*})}C(t_{0})-C(t_{*}))+o(m(\Ge)).
\]
Once again, we conclude that $t_{*}$
must be a point of global minimum of $e^{x_{0}t}C(t)$. Since
$e^{x_{0}t}C(t)$ is an entire function of $t$, as is evident from
(\ref{Caprfn}), the support of
$\Gs_{*}$ must be discrete. If the support of $\Gs_{*}$ is infinite
\begin{equation}
  \label{sigmastar}
  \Gs_{*}=\sum_{n=1}^{\infty}a_{n}\Gd_{t_{n}}(t),\quad a_{n}>0,
\end{equation}
and does not satisfy (\ref{MSz}), then, by the M\"untz-Szasz
theorem \cite{fell68}, the set of functions
$u^{t_{n}}$ are dense in $C_{0}([0,1])$. But then the
functions $e^{-xt_{n}}$ are dense in $C_{0}(0,\infty)$. In that case
the equation
\begin{equation}
  \label{minC}
  e^{x_{0}t_{n}}C(t_{n})=m\defeq\min_{t\ge 0}e^{x_{0}t}C(t)
\end{equation}
would imply that
\[
\int_{0}^{1}g(x)(f_{*}(x)-f_{0}(x))dx=g(x_{0})m,\quad\forall
g\in C_{0}(0,\infty),
\]
where $f_{*}$ is a shorthand for $f_{\Gs_{*}}$.
This easily leads to a contradiction if, for example, we take a
delta-like sequence $g_{n}(x)$ converging to $\Gd_{a}(x)$ for an arbitrary
$a\in(0,1)$.

Now, equation (\ref{minC}) written as $C(t_{n})=e^{-x_{0}t_{n}}m$ implies
\[
\int_{0}^{1}f_{*}(x)(f_{*}(x)-f_{0}(x))dx=f_{*}(x_{0})m,
\]
giving a formula for $m$,
\begin{equation}
  \label{minCform}
  m=\nth{f_{*}(x_{0})}\int_{0}^{1}f_{*}(x)(f_{*}(x)-f_{0}(x))dx.
\end{equation}
The constraint $f_{*}(x_{0})-f_{0}(x_{0})=\Gd$, can then be incorporated into the
optimality conditions by replacing $f_{*}(x_{0})$ by
$f_{0}(x_{0})+\Gd$ in (\ref{minCform}), obtaining (\ref{Coptim}).

To see that (\ref{Coptim}) with equality provision
is sufficient for optimality, we integrate (\ref{Coptim}) with respect to
$\Gs_{*}$, and obtain $f_{0}(x_{0})+\Gd=f_{*}(x_{0})$, taking (\ref{Caprfn})
into account, unless
\begin{equation}
  \label{badcase}
  \int_{0}^{1}f_{*}(x)(f_{*}(x)-f_{0}(x))dx=0.
\end{equation}
However, if (\ref{badcase}) holds, then (\ref{Coptim}) reads $C(t)\ge
0$. Integrating this inequality with respect to $\Gs_{0}$, such that
$f_{0}=f_{\Gs_{0}}$, we obtain
\begin{equation}
  \label{f0ineq}
  \int_{0}^{1}f_{0}(x)(f_{*}(x)-f_{0}(x))dx\ge 0.
\end{equation}
Subtracting (\ref{f0ineq}) from (\ref{badcase}), we obtain $\|f_{0}-f_{*}\|\le
0$, which implies that $f_{*}=f_{0}$ and hence, $\Gs_{*}=\Gs_{0}$. This shows
that (\ref{Coptim}) implies $f_{*}(x_{0})-f_{0}(x_{0})=\Gd$, provided $\Gs_{*}\not=\Gs_{0}$.

Now, if $\Gs$ is any competitor
measure, satisfying the constraint, then equation (\ref{Caprep})
becomes
\[
\GD J=2\left(\int_{0}^{\infty}C(t)d\Gs(t)-f_{*}(x_{0})m\right)+\|f_{\GD\Gs}\|_{2}^{2},
\]
where
\[
m=\frac{1}{f_{0}(x_{0})+\Gd}\int_{0}^{1}f_{\Gs_{*}}(x)(f_{\Gs_{*}}(x)-f_{0}(x))dx.
\]
Discarding $\|f_{\GD\Gs}\|_{2}^{2}$ and using inequality
(\ref{Coptim}), we obtain
\[
\GD J\ge 2\left(m \int_{0}^{\infty}e^{-x_{0}t}d\Gs(t) -f_{*}(x_{0})m\right)=2m(f_{\Gs}(x_{0})-f_{*}(x_{0}))=0
\]
since, due to the constraint, we must have 
$f_{\Gs}(x_{0})=f_{*}(x_{0})$ for any competitor measure.

\end{proof}

To illustrate the optimality conditions, let us consider an example with
$f_{0}(x)=e^{-x}$. In this case, the solutions of (\ref{Caprmin}) can be
computed explicitly. The forms of these solutions were, in fact, suggested by
first solving these problems numerically with an algorithm based on formula
(\ref{Caprep}). If $\Gd>0$, then $f_{*}(x)=f^{+}_{*}(x)=a+be^{-x\tau}$ for
appropriately chosen $a>0$, $b>0$ and $\tau>1$. If $\Gd\in(-e^{-x_{0}},0)$,
then $f_{*}(x)=f^{-}_{*}(x)=ae^{-x\tau}$ for appropriately chosen $a>0$ and
$\tau>1$. If $\Gd>0$, the optimality condition (\ref{Coptim}) gives equations
$\Hat{C}(0)=\hat{C}(\tau)=0$, and $\Hat{C}'(\tau)=0$, where
$\Hat{C}(t)=C(t)-me^{-x_{0}t}$. Together with the constraint,
$f_{*}(x_{0})=f_{0}(x_{0})+\Gd$, this results in 4 equations for the 4
unknowns $a,b,\tau$ and $m$. Similarly, if $\Gd<0$, the optimality condition
(\ref{Coptim}) gives equations $\hat{C}(\tau)=0$, and $\Hat{C}'(\tau)=0$,
which together with the constraint, results in 3 equations for the 3 unknowns
$a,\tau$ and $m$. The resulting system of equations is linear in $(a,b,m)$ for
$\Gd>0$ and in $(a,m)$ for $\Gd<0$, so that these parameters can be easily
eliminated, leading to a single algebraic equation for $\tau$. This equation
is very complicated to be displayed here, but it can be easily investigated
either numerically or by means of a computer algebra system and shown to have
a unique solution $\tau(x_{0},\Gd)$, for all $x_{0}\ge 1$ and
$\Gd\in(-e^{-x_{0}},0)$, if $\Gd<0$, and $\Gd\in(0,+\infty)$, if $\Gd>0$.
When $\Ge$ is small, we find
\[
M_{\Ge}(x_{0};e^{-x})=E_{+}(x_{0})\Ge+O(\Ge^{2}),\quad
m_{\Ge}(x_{0};e^{-x})=E_{-}(x_{0})\Ge+O(\Ge^{2}),
\]
where $E_{+}(x_{0})$ is an increasing function of $x_{0}$ from
\[
E_{+}(1)=\sqrt{-\frac{e^{4}-8e^{3}+14e^{2}+8e-19}{(e^{2}-2e-1)(3e^{2}-10e+5)}}\approx 2.67788263
\]
to
\[
E_{+}(\infty)=\sqrt{-\frac{e^{2}+2e-1}{3e^{2}-10e+5}}\approx 27.488747597.
\]
The function $E_{-}(x_{0})$ behaves in a more complicated manner. It increases
from 
\[
E_{-}(1)=2\sqrt{\frac{e^{2}-1}{e^{4}-6e^{2}+1}}\approx 1.5
\]
to its maximal value $E_{-}((e+2)/(e+1))\approx 1.566$ and then decreases to 0
as $x_{0}$ increases from $(e+2)/(e+1)$ to $+\infty$, In fact,
$\hat{E}_{-}(x_{0})=x_{0}^{-1}e^{x_{0}}E_{-}(x_{0})$ is a monotone increasing
function from $eE_{-}(1)\approx 4.1$ to 
\[
\hat{E}_{-}(\infty)=2e\sqrt{\frac{2(e^{2}-1)}{(e^{2}-1)^{2}-4e^{2}}}\approx 5.8.
\]
The plots of $f^{\pm}_{*}(x)$ and $f_{0}(x)$ together with their respective certificates of
optimality $\Hat{C}(t)=C(t)-me^{-x_{0}t}$ are shown in Fig.~\ref{fig:true}.
\begin{figure}[t]
  \centering
  \includegraphics[scale=0.32]{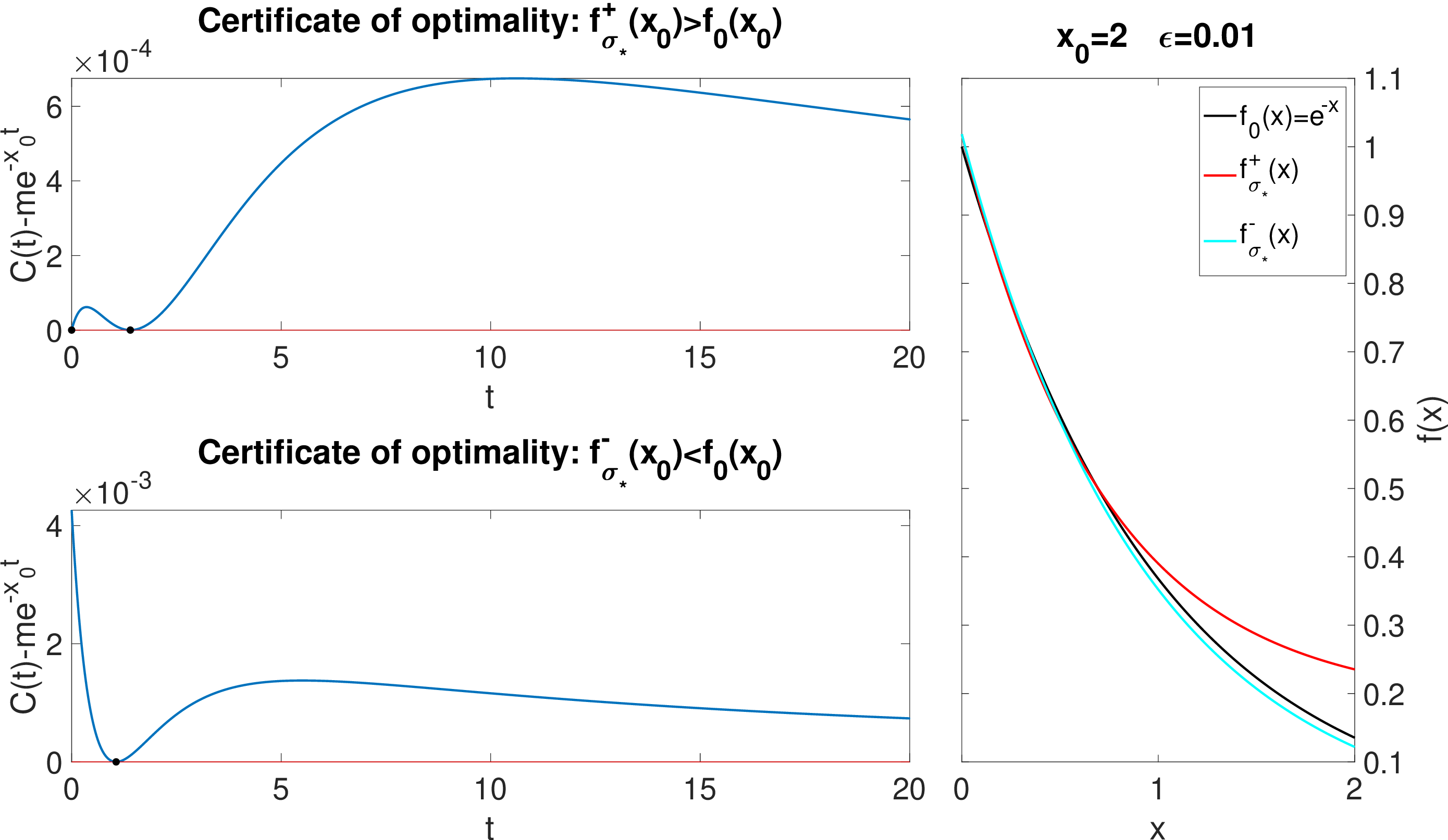}
  \caption{Solutions of the local worst case extrapolation problems (\ref{Caprmin})
    with $f_{0}(x)=e^{-x}$, $x_{0}=2$, $\Ge=0.01$, and their respective
  certificates of optimality.}
  \label{fig:true}
\end{figure}
\medskip

\textbf{Acknowledgments.} This material is based upon work supported
by the National Science Foundation under Grant No. DMS-2305832. The
authors are grateful to David Grabovsky, for referring them to papers
on the asymptotics of the Gauss hypergeometric function.

\def\cprime{$'$} \ifx \cedla \undefined \let \cedla = \c \fi\ifx \cyr
  \undefined \let \cyr = \relax \fi\ifx \cprime \undefined \def \cprime
  {$\mathsurround=0pt '$}\fi\ifx \prime \undefined \def \prime {'}
  \fi\def\Ya{Ya}

\appendix

\section{Kuhn-Tucker in topological vector spaces}
\setcounter{equation}{0}
\label{app:KT}
Let $X$ be a  locally convex topological vector space. Let $\CF\subset
X^{*}\oplus\bb{R}$ be any subset. Define
\begin{equation}
  \label{Kdef}
  K=\{x\in X: f(x)\le\Ga\ \forall(f,\Ga)\in\CF\}.
\end{equation}
Then $K\subset X$ is both closed and convex. Let $h\in X^{*}$ be a given functional.
The maximization problem
\begin{equation}
  \label{LP}
  m=\sup_{x\in K}h(x)
\end{equation}
is called the linear programming problem. If the set $K$ is empty the value of
$m$ is set to $-\infty$ by convention. 

Let $\Hat{\CF}$ denote the smallest closed (in weak-* topology of $X^{*}\oplus\bb{R}$) convex cone
containing $\CF$. We remark that
\[
K=\{x\in X: f(x)\le\Ga\ \forall(f,\Ga)\in\Hat{\CF}\}.
\]
We also define 
\[
K^{*}=\{(f,\Ga)\in X^{*}\oplus\bb{R}:f(x)\le\Ga\ \forall x\in K\}.
\]
Obviously, $\Hat{\CF}\subset K^{*}$. It is easy to give an example
where $K^{*}\not=\Hat{\CF}$. Let $X=\bb{R}$ and $\CF=\{(1,0)\}$, so
that $K=\{x\in\bb{R}: x\le 0\}$ and
$\Hat{\CF}=\{(f,0)\in\bb{R}^{2}:f\ge 0\}$. But
\[
K^{*}=\{(f,\Ga)\in\bb{R}^{2}:fx\le\Ga\ \forall x\le
0\}=\{(f,\Ga)\in\bb{R}^{2}:f\ge 0,\ \Ga\ge 0\}.
\]

Our goal is to obtain a dual formulation of (\ref{LP}). We observe that if
$m<+\infty$, then $(h,m)\in K^{*}$, while $(h,m-\Ge)\not\in K^{*}$ for
any $\Ge>0$. Thus, $m$ is the smallest of the numbers $\Ga$, such that
$(h,\Ga)\in K^{*}$. For this reason, we introduce the following notation. For
any subset $S\subset X^{*}\times\bb{R}$ and any $f\in X^{*}$, we define
\[
S_{f}=\{\Ga\in\bb{R}:(f,\Ga)\in S\}.
\]
Our remark can then be stated that $m<+\infty$ \IFF $K^{*}_{h}\not=\emptyset$,
in which case $m=\min K^{*}_{h}$.  The dual set $K^{*}$ is a maximal set of
inequalities defining $K$, while the set $\Hat{\CF}\subset K^{*}$ describes
the weak-* closure of the set of inequalities obtained by positive linear
combinations of finite subsets of inequalities in (\ref{Kdef}). The remarkable fact of the
Kuhn-Tucker theorem is that even though $\Hat{\CF}$ can be a lot smaller than
$K^{*}$, as our example showed, it still contains all the bottom extremal
points of $K^{*}$.
\begin{theorem}
  \label{th:KT}
Suppose that the set $K$, given by (\ref{Kdef}), is non-empty. Let $\Hat{\CF}$
be the smallest weak-* closed convex cone
containing $\CF$. Let $m$ be given by (\ref{LP}). Then
\begin{equation}
  \label{dualLP}
m=\min\Hat{\CF}_{h},
\end{equation}
where we have indicated that the minimum is achieved, if $\Hat{\CF}_{h}\not=\emptyset$.
\end{theorem}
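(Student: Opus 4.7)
The plan is to prove $m=\min\hat{\CF}_h$ (with the convention $\min\emptyset=+\infty$) by establishing two inequalities. The easy direction, $m\le\inf\hat{\CF}_h$, follows immediately from the observation that, by linearity and weak-$*$ continuity of point evaluations, the inequalities $f(x)\le\alpha$ defining $K$ extend from $\CF$ to its closed convex conic hull $\hat{\CF}$. Hence whenever $(h,\alpha)\in\hat{\CF}$, one has $h(x)\le\alpha$ for every $x\in K$, so $m\le\alpha$. This also shows that if $m=+\infty$ then $\hat{\CF}_h=\emptyset$, in which case the theorem reads $+\infty=+\infty$; so we henceforth assume $m<+\infty$, and the remaining task is to prove $(h,m)\in\hat{\CF}$, which yields both the reverse inequality and attainment of the minimum.

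The central step is a Hahn-Banach argument. Suppose for contradiction that $(h,m)\notin\hat{\CF}$. Since $\hat{\CF}$ is weak-$*$ closed and convex while $\{(h,m)\}$ is weak-$*$ compact, and since the dual of $X^{*}\oplus\bb{R}$ in the product of the weak-$*$ and Euclidean topologies is precisely $X\oplus\bb{R}$ with pairing $\langle(f,\alpha),(y,s)\rangle=f(y)+s\alpha$, the Hahn-Banach separation theorem produces $(y,s)\in X\oplus\bb{R}$ strictly separating $(h,m)$ from $\hat{\CF}$. The cone property of $\hat{\CF}$ forces the separating constant on $\hat{\CF}$ to be $0$, yielding
\[
f(y)+s\alpha\le 0\ \ \forall(f,\alpha)\in\hat{\CF},\qquad h(y)+sm>0.
\]

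The remaining work is a case analysis on the sign of $s$. If $s<0$, then $z=y/(-s)$ satisfies $f(z)\le\alpha$ for every $(f,\alpha)\in\CF$, so $z\in K$, while the strict inequality becomes $h(z)>m$, contradicting $m=\sup_K h$. If $s=0$, then $f(y)\le 0$ on $\CF$, so $y$ is a recession direction of $K$: for any $x_{0}\in K$ (which is nonempty by hypothesis) and any $\lambda\ge 0$, $x_{0}+\lambda y\in K$, and $h(y)>0$ then forces $h(x_{0}+\lambda y)\to+\infty$, contradicting $m<+\infty$. The delicate case is $s>0$: now $z=y/s$ satisfies the reversed inequalities $f(z)\le-\alpha$ on $\CF$, so $z$ need not belong to $K$. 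The key observation is that for any $x_{0}\in K$, $f(z+x_{0})\le 0$ for every $(f,\alpha)\in\CF$, i.e., $z+x_{0}$ is a recession direction of $K$. As in the $s=0$ case, $h$ must be nonpositive on every recession direction (otherwise $\sup_K h=+\infty$), so $h(z+x_{0})\le 0$, giving $h(z)\le-h(x_{0})$. Taking the supremum over $x_{0}\in K$ yields $h(z)\le-m$, but the separating inequality reads $h(z)>-m$, a contradiction. This closes all three cases and establishes $(h,m)\in\hat{\CF}$. The principal technical obstacle is precisely this $s>0$ case, where the naive rescaling of $y$ does not lie in $K$ and one must route the argument through the recession cone of $K$.
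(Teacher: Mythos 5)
Your proposal is correct, and for two of the three cases in the final case analysis it coincides with the paper's argument up to a sign convention (your $(y,s)$ is the paper's $-(\xi_{0},\mu_{0})$). The common skeleton: reduce to showing $(h,m)\in\Hat{\CF}$, separate by Hahn--Banach using that the weak-$*$ dual of $X^{*}\oplus\bb{R}$ is $X\oplus\bb{R}$, force the separating constant to $0$ by the cone property, and then split on the sign of the real parameter. Your $s<0$ and $s=0$ cases are identical to the paper's $\mu_{0}>0$ and $\mu_{0}=0$ cases. The genuine divergence is in the hard case, your $s>0$ (the paper's $\mu_{0}<0$). The paper proceeds computationally: it introduces the perspective-type transform $y(x,t)=(x-t\xi_{0})/(1-t\nu_{0})$, verifies $y(x,t)\in K$ for $t\in(0,1/\nu_{0})$, expands $h(y(x,t))$ around $m$, and produces, via the definition of supremum, a specific $x_{0}$ and $t=1/(2\nu_{0})$ with $h(y(x_{0},t))>m$. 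You instead observe that $z+x_{0}$ lies in the recession cone of $K$ for any $x_{0}\in K$, that $h$ must be $\le 0$ on the recession cone (else $m=+\infty$), and then $h(z)\le -h(x_{0})$ for all $x_{0}\in K$ yields $h(z)\le -m$, contradicting the strict separation $h(z)>-m$. Your route is shorter and more conceptual; it isolates the role of the recession cone, which the paper's perspective transform exploits implicitly. The paper's route is more elementary in that it stays with explicit manipulations inside $K$ and never names the recession cone, at the cost of a small computation. Both are correct; yours is the cleaner presentation of the same underlying geometric fact.
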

We remark that requiring $K\not=\emptyset$ is essential.
For example, we can take $X=\bb{R}^{2}$ and
$\CF=\{(\Be_{1},0),(-\Be_{1},-1)\}$, corresponding to constraints $x_{1}\le 0$
and $-x_{1}\le -1$, which are inconsistent, so that $K=\emptyset$. We compute
\[
\Hat{\CF}=\{((\Gl_{1}-\Gl_{2})\Be_{1},-\Gl_{2}):\Gl_{1}\ge 0,\ \Gl_{2}\ge 0\}.
\]
For $h=\Be_{2}$ the set of pairs $(\Be_{2},\Ga)\in\Hat{\CF}$ is empty
resulting in the minimum in (\ref{dualLP}) to be $+\infty$, while the supremum
over the empty set is $-\infty$.
\begin{proof}
We have already observed that
\[
\sup_{x\in K}h(x)<+\infty\quad\eqv\quad K^{*}_{h}\not=\emptyset.
\]
Therefore, if $K^{*}_{h}=\emptyset$ then $\Hat{\CF}_{h}=\emptyset$
since $\Hat{\CF}\subset K^{*}$. Thus, if $m=+\infty$, then formula
(\ref{dualLP}) is valid. It only remains to consider the case
$m<+\infty$, whereby $(h,m)\in K^{*}$. The theorem below asserts that
$(h,m)\in\Hat{\CF}$, and therefore, that $m$ has to be equal to the \rhs\ of
(\ref{dualLP}) since $(h,m-\Ge)\not\in K^{*}$ for every $\Ge>0$.
\end{proof}

\begin{theorem}
  \label{preKT}
  Under assumptions of Theorem~\ref{th:KT} assume additionally that
  $m<+\infty$. Then $(h,m)\in\Hat{\CF}$.
\end{theorem}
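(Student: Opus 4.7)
The plan is contradiction via Hahn--Banach. Assume $(h,m)\notin\Hat{\CF}$. Since $\Hat{\CF}$ is by definition a weak-$*$ closed convex cone in the locally convex space $X^{*}\oplus\bb{R}$ (with weak-$*$ topology on the first factor and the usual topology on $\bb{R}$), geometric Hahn--Banach separates the point $(h,m)$ from $\Hat{\CF}$ by a continuous linear functional. The continuous dual of $(X^{*},\text{weak-}*)$ is $X$ itself, so this separating functional takes the form $(f,\Ga)\mapsto f(x)+c\Ga$ for some $(x,c)\in X\oplus\bb{R}$. Because $\Hat{\CF}$ is a cone through the origin, any functional bounded below on it must be nonnegative there; combining this with strict separation from $(h,m)$ yields
\[
f(x)+c\Ga\ge 0\quad\forall(f,\Ga)\in\Hat{\CF},\qquad h(x)+cm<0.
\]

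I would then split into three cases according to the sign of $c$ and, in each case, build an element (or a ray) inside $K$ along which $h$ exceeds $m$, contradicting $m=\sup_{K}h<+\infty$.

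The cases $c>0$ and $c=0$ are routine. For $c>0$, rescaling $(x,c)$ to $(x/c,1)$ turns the inequality on $\CF$ into $f(-x)\le\Ga$, placing $-x\in K$; then $h(-x)\le m$ directly contradicts $h(x)+m<0$. For $c=0$, the hypothesis $K\neq\emptyset$ supplies some $y\in K$, and the ray $\{y-tx:t\ge 0\}$ lies in $K$ because $f(y-tx)\le f(y)\le\Ga$ on $\CF$; but $h(y-tx)=h(y)-th(x)\to+\infty$ as $t\to+\infty$, a contradiction.

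The main obstacle is the case $c<0$, where the separation has the "wrong" orientation and mere non-emptiness of $K$ is not enough. After normalizing to $c=-1$, one has $f(x)\ge\Ga$ on $\CF$ and $h(x)<m$. Here I would invoke the definition of the supremum to choose $y\in K$ with $h(y)>h(x)$, chain $f(y)\le\Ga\le f(x)$ to deduce $f(y)-f(x)\le 0$, and then verify that the ray $\{y+t(y-x):t\ge 0\}$ stays inside $K$ (since $f(y+t(y-x))=f(y)+t(f(y)-f(x))\le f(y)\le\Ga$), while $h$ grows linearly without bound along it because $h(y)-h(x)>0$. This again contradicts $m<+\infty$. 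Exhausting the three cases forces $(h,m)\in\Hat{\CF}$.
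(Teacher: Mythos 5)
Your proof is correct and follows essentially the same strategy as the paper: separate $(h,m)$ from the weak-$*$ closed convex cone $\Hat{\CF}$ by Hahn--Banach, use the fact that the dual of $(X^{*},\text{weak-}*)$ is $X$ to write the separating functional as $(f,\Ga)\mapsto f(x)+c\Ga$, exploit the cone structure to normalize the separating constant to zero, and then argue by three cases on the sign of $c$ (the paper's $\mu_{0}$). Cases $c>0$ and $c=0$ are handled identically. The one place you diverge is the case $c<0$: the paper passes to the fractional-linear family $y(x,t)=\frac{x-t\xi_{0}}{1-t\nu_{0}}$, verifies $y(x,t)\in K$ for $t\in(0,1/\nu_{0})$, and then pins down a concrete choice $t=(2\nu_{0})^{-1}$ together with a near-maximizer $x_{0}\in K$ to produce a point where $h$ exceeds $m$. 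You instead pick $y\in K$ with $h(y)>h(x)$ (possible since $h(x)<m=\sup_{K}h$), check that the linear ray $\{y+t(y-x):t\ge0\}$ stays in $K$ because $f(y)\le\Ga\le f(x)$, and note that $h$ is unbounded on this ray. That is a touch cleaner: it parallels your $c=0$ case exactly, avoids the normalization denominator, and contradicts $m<+\infty$ directly without needing to exhibit a specific point. Both arguments are valid.
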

\begin{proof}
  If $(h,m)\not\in\Hat{\CF}$ then, by the Hahn-Banach convex separation
  theorem there exists $\xi_{0}\in X$, $\mu_{0}\in\bb{R}$, $\Gg\in\bb{R}$,
  such that
  \begin{equation}
    \label{HB}
    h(\xi_{0})+\mu_{0}m<\Gg\le f(\xi_{0})+\mu_{0}\Ga,\quad\forall(f,\Ga)\in\Hat{\CF}.
  \end{equation}
Here, we used the fact that the set of all linear continuous functionals on
$X^{*}$, equipped with its weak-* topology is parametrized by $X$, i.e. for
any $F\in(X^{*},\text{weak-*})^{*}$ there exists a unique $x\in X$, such that
$F(f)=f(x)$ for all $f\in X^{*}$.

We first observe that if there exists $(f_{0},\Ga_{0})\in\Hat{\CF}$, such that
$f_{0}(\xi_{0})+\mu_{0}\Ga_{0}<0$ then the second inequality in (\ref{HB})
cannot hold since $(\Gl f_{0},\Gl\Ga_{0})\in\Hat{\CF}$ for any
$\Gl>0$. However, if $f_{0}(\xi_{0})+\mu_{0}\Ga_{0}\ge 0$ then $\Gl
f_{0}(\xi_{0})+\mu_{0}\Gl\Ga_{0}$ can be made as close to 0 as one wishes. It
follows that $\Gg=0$. We thus restate (\ref{HB}) in a more convenient form:
\begin{equation}
  \label{HBcone}
  h(\xi_{0})+\mu_{0}m<0,\qquad f(\xi_{0})+\mu_{0}\Ga\ge 0,\quad\forall(f,\Ga)\in\Hat{\CF}.
\end{equation}
We need to consider 3 possibilities for $\mu_{0}$.
\begin{enumerate}
\item $\mu_{0}>0$. In this case
\[
f\left(-\frac{\xi_{0}}{\mu_{0}}\right)\le\Ga,\quad\forall(f,\Ga)\in\Hat{\CF}.
\]
which implies that $-\xi_{0}/\mu_{0}\in K$. But then, according to the first
inequality in (\ref{HBcone}),
\[
h\left(-\frac{\xi_{0}}{\mu_{0}}\right)>m,
\]
which contradicts the definition (\ref{LP}) of $m$.
\item $\mu_{0}=0$. Since $K\not=\emptyset$ there exists $u\in K$. But then for any
  $\Gl\ge 0$, we have
\[
f(u-\Gl\xi_{0})\le\Ga,\quad\forall(f,\Ga)\in\Hat{\CF}.
\]
This implies that $u-\Gl\xi_{0}\in K$. But $h(u-\Gl\xi_{0})=h(u)-\Gl
h(\xi_{0})$, which can be made arbitrarily large and positive by a choice of
$\Gl>0$ since $h(\xi_{0})<0$. This contradicts the assumption that $m<+\infty$.
\item $\mu_{0}<0$. For convenience of working with positive numbers, we set
  $\mu_{0}=-\nu_{0}$, and $\nu_{0}>0$. In that case, we have
  $f(\xi_{0})\ge\nu_{0}\Ga$ for every $(f,\Ga)\in\Hat{\CF}$. Then for every
  $x\in K$, we have for any $t>0$
\[
f(x-t\xi_{0})\le(1-t\nu_{0})\Ga.
\]
Thus, for all $x\in K$ and $t\in(0,1/\nu_{0})$, we conclude that
\[
y(x,t)=\frac{x-t\xi_{0}}{1-t\nu_{0}}\in K.
\]
We will get a contradiction by showing that
\[
\sup_{\myatop{x\in K}{0<t<\nu_{0}^{-1}}}h(y(x,t))>m.
\]
We compute
\[
h(y(x,t))=m+\frac{h(x)-m-t(h(\xi_{0})-\nu_{0}m)}{1-t\nu_{0}}.
\]
By definition of the supremum there exist $x_{0}\in K$, such that
\[
h(x_{0})>m+\frac{h(\xi_{0})-\nu_{0}m}{2\nu_{0}}
\]
since $h(\xi_{0})-\nu_{0}m<0$. But then $h(y(x_{0},(2\nu_{0})^{-1}))>m$.
\end{enumerate}
The obtained contradictions imply that $(h,m)\in\Hat{\CF}$, establishing (\ref{dualLP}).
\end{proof}


\section{Asymptotics of $u(z;\mu)$ for large $\mu$}
\setcounter{equation}{0}
\label{app:HG}
To compute the asymptotics of $u(z;\mu)$, as $\mu\to+\infty$ for $z\in\GO=\{z\in\bb{C}:\Re
z>0,\ z\not\in[0,1]\}$, we first apply the Pfapf transformation
\cite[formula (1.8)]{khda14}, and obtain
\[
  u(z;\mu)=\nth{z}F\left(\left[\frac{1}{4}+\frac{i\mu}{2}, \frac{1}{4}-\frac{i\mu}{2}\right],[1];1-\nth{z^{2}}\right).
\]
We note that The map $g(z)=1-z^{-2}$ maps $\GO$ into
$\Hat{\GO}=\bb{C}\setminus\{w\in\bb{R}:w(w-1)\ge 0\}$, to which the
asymptotic expansion from \cite[Theorem 3.2]{khda14}
applies. Substituting our parameters into the expansion \cite[(3.8)--(3.11)]{khda14}
and retaining only the leading term ($n=1$ in the expansion), we obtain
\begin{multline*}
\pi iF\left(\left[\frac{1}{4}+\frac{i\mu}{2},\frac{1}{4}-\frac{i\mu}{2}\right],
[1];1-\nth{z^{2}}\right)\sim\\
\left(\frac{\xi}{2}\right)^{\frac{1}{2}}
\left(e^{\left(\frac{i\mu}{2}-\frac{1}{4}\right)\pi i}K_{-\frac{1}{2}}\left(-\frac{i\xi\mu}{2}\right)
-e^{\left(\frac{3}{4}-\frac{i\mu}{2}\right)\pi i}K_{-\frac{1}{2}}\left(\frac{i\xi\mu}{2}\right)\right)c_{0}+O(\Phi_{1}(\mu,\xi)),
\end{multline*}
where
\[
\xi=\ln\left(1-\frac{2}{z^{2}}-2i\sqrt{\left(1-\nth{z^{2}}\right)\nth{z^{2}}}\right),\qquad
c_{0}=-\frac{\sqrt{z}}{(z^{2}-1)^{1/4}}.
\]
\begin{multline*}
  \Phi_{1}(\mu,\xi)=
e^{-\frac{\pi\mu}{2}}\frac{\sqrt{|\xi|}}{\mu}\left|K_{-\frac{1}{2}}\left(-\frac{i\mu\xi}{2}\right)\right|
+e^{\frac{\pi\mu}{2}}\frac{\sqrt{|\xi|}}{\mu}\left|K_{-\frac{1}{2}}\left(\frac{i\mu\xi}{2}\right)\right|\\
+\frac{e^{-\frac{\pi\mu}{2}}}{\sqrt{|\xi|}\mu}\left|K_{\frac{1}{2}}\left(-\frac{i\mu\xi}{2}\right)\right|
+\frac{e^{\frac{\pi\mu}{2}}}{\sqrt{|\xi|}\mu}\left|K_{\frac{1}{2}}\left(\frac{i\mu\xi}{2}\right)\right|.
\end{multline*}
Here the transformation $\Gz=1-2z^{-2}$ maps 
\[
\GO=\{z\in\bb{C}:\Re z>0,\ z\not\in[0,1]\}
\]
onto
\[
G=\bb{C}\setminus\{\Gz\in\bb{R}:|\Gz|\ge 1\}.
\]
Then
\[
\xi=\ln(\Gz-i\sqrt{1-\Gz^{2}}).
\]
We observe that $\Gz=\cosh\xi$, and therefore, $\xi(\Gz)$ is injective
on $G$. Thus, $\Md_{\infty}\xi(G)=\xi(\Md_{\infty}G)$. Computing the
images of $(-\infty,-1]\pm 0i$ and $[1,+\infty)\pm 0i$ and noting that
$\xi(\infty)=\infty$, we conclude that $\xi(\Gz)$
maps $G$ onto the strip $-\pi<\Im\xi<0$ bijectively. We also note that
\[
\cosh\xi=\Gz=1-\nth{z^{2}},
\]
which implies that
\[
\nth{z^{2}}=-\sinh^{2}\left(\frac{\xi}{2}\right).
\]
Since $z\in\GO$ lies in the right half-plane, while $\Im\xi\in(-\pi,0)$, we conclude
that
\[
\nth{z}=i\sinh\left(\frac{\xi}{2}\right)=\sin\left(\frac{i\xi}{2}\right).
\]
We can write this as
\[
\cos\left(\frac{\pi}{2}-\frac{i\xi}{2}\right)=\nth{z},\quad-\pi<\Im\xi<0.
\]
Since the map $\eta=\pi/2-i\xi/2$ maps the strip $-\pi<\Im\xi<0$ onto
the strip $\Re\eta\in(0,\pi/2)$, we conclude that $\eta=\Ga(z)$,
where $\Ga(z)$ was defined in (\ref{Ralpha}) and, thus,
\begin{equation}
  \label{zxi}
  \xi=i(2\Ga(z)-\pi). 
\end{equation}
Using (\ref{zxi}) and the formulas
\[
K_{\frac{1}{2}}(z)=K_{-\frac{1}{2}}(z)=\sqrt{\frac{\pi}{2}}\frac{e^{-z}}{\sqrt{z}},
\]
we obtain the error estimate
\[
  O(\Phi_{1}(\mu,\xi))=O\left(\frac{e^{\frac{\pi\mu}{2}(1+\Im\xi/\pi)}}{\mu\sqrt{\mu}}\right)
  =O\left(\frac{|e^{\pi\mu\Ga(z)}|}{\mu\sqrt{\mu}}\right).
\]
Since $|\Im\xi|<\pi$, we conclude that the term
$e^{\left(\frac{i\mu}{2}-\frac{1}{4}\right)\pi i}K_{-\frac{1}{2}}\left(-\frac{i\xi\mu}{2}\right)$ 
is negligible, compared to
$e^{\left(\frac{3}{4}-\frac{i\mu}{2}\right)\pi i}K_{-\frac{1}{2}}\left(\frac{i\xi\mu}{2}\right)$. 
Therefore, we obtain the asymptotics 
\[
u(z;\mu)\sim\frac{e^{i\pi/4}}{\sqrt{2\pi\mu}}
\frac{e^{\frac{\pi\mu}{2}\left(1-\frac{i\xi}{\pi}\right)}}{(z^{2}-1)^{1/4}\sqrt{z}}
\frac{\sqrt{\xi}}{\sqrt{i\xi}}
+O\left(\frac{|e^{\pi\mu\Ga(z)}|}{\mu\sqrt{\mu}}\right).
\]
Since $-\pi<\Im\xi<0$, we conclude that
\[
\frac{\sqrt{\xi}}{\sqrt{i\xi}}=e^{-\frac{i\pi}{4}}.
\]
Thus, for all $z\in\GO$
\begin{equation}
  \label{uzmuasymp}
  u(z;\mu)=\frac{1}{\sqrt{2\pi\mu}}\frac{e^{\pi\mu\Ga(z)}}{(z^{2}-1)^{1/4}\sqrt{z}}
  +O\left(\frac{|e^{\pi\mu\Ga(z)}|}{\mu\sqrt{\mu}}\right).
\end{equation}


\section{Estimate of $\|\phi_{\Ge}\|_{\mathfrak{H}_{p}}$}
\setcounter{equation}{0}
\label{app:phiHp}
The goal of this section is to prove the lower bound (\ref{philb}) on
$\|\phi_{\Ge}\|_{\mathfrak{H}_{p}}$. When $x_{0}>1$,
Part (i) of Theorem~\ref{th:psiyp} can be used to estimate
$\|\psi_{\Gve}\|_{\mathfrak{H}_{p}}$ from below. If $x_{0}=1$
\begin{equation}
  \label{psieps1}
\psi_{\Gve}(z)=\int_{0}^{\infty}\frac{u(z;\mu)\mu\tanh(\pi\mu)}{2\hat{\Gve}^{2}\cosh(\pi\mu)+1}d\mu.  
\end{equation}
Its asymptotics as $\Gve\to 0^{+}$ is given by the following theorem.
\begin{theorem}
\label{th:psiz1}
Let $z \in \GO=\{z\in\bb{C}:\Re z>0,\ z\not\in[0,1]\}$, and
$\psi_{\Gve}$ be the solution of the integral equation (\ref{inteq}) with $x_{0}=1$. Then
\begin{equation}
  \label{psiz1}
\psi_{\Gve}(z)\sim\frac{R(z)\sqrt{|\ln\hat{\eps}|}}{\pi\sin(\Ga(z))}\hat{\eps}^{\frac{-2\Ga(z)}{\pi}},\quad
\hat{\eps}=\frac{\Gve}{\sqrt{2\pi}}.
\end{equation}
where $R(z)$ and $\Ga(z)$ are defined in (\ref{Ralpha}).
\end{theorem}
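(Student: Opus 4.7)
The plan is to imitate the proof of Theorem~\ref{th:psiyp}(i), but with the new twist that $u(x_0;\mu)=u(1;\mu)\equiv 1$, so the integrand in (\ref{psieps1}) no longer benefits from the exponential growth of an extra $u(x_0;\mu)\sim R(x_0)e^{\mu\Ga(x_0)}/\sqrt{2\pi\mu}$ factor. The only exponentially growing factor now comes from $u(z;\mu)$ itself, and, crucially, its asymptotic formula (\ref{uofx0}) carries a $1/\sqrt{2\pi\mu}$ prefactor. This $1/\sqrt{\mu}$ turns out to produce the $\sqrt{|\ln\hat{\eps}|}$ correction in (\ref{psiz1}).

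Concretely, I would first substitute the asymptotic (\ref{uofx0}) into (\ref{psieps1}) to obtain
\[
\psi_{\Gve}(z)=\frac{R(z)}{\sqrt{2\pi}}\int_{0}^{\infty}\frac{\sqrt{\mu}\,e^{\pi\beta(z)\mu}\,w(z;\mu)}{2\hat{\Gve}^{2}\cosh(\pi\mu)+1}\,d\mu,
\]
where $\beta(z)=\Ga(z)/\pi$ and $w(z;\mu)=\tanh(\pi\mu)\cdot u(z;\mu)/u_{0}(z;\mu)$ is continuous in $\mu$ and tends to $1$ as $\mu\to\infty$ (by Lemma~\ref{lem:ux0asym}, exactly as in the proof of Theorem~\ref{th:psiyp}). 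By Lemma~\ref{lem:expbnd}, $\Re\beta(z)\in(0,\tfrac12)$. The justification that the $w(z;\mu)$ may be replaced by $1$ in the asymptotic analysis is by an application of Lemma~\ref{lem:W} applied to the measure space $((0,\infty),d\mu)$ with weights $\sqrt{\mu}e^{\pi\beta(z)\mu}/(2\hat{\eps}^2\cosh(\pi\mu)+1)$ and the factor $w(z;\mu)$ versus $1$; condition (ii) holds because $w(z;\mu)-1$ decays at a rate that controls the $e^{-\pi\mu}$ tail, while condition (i) will follow once the main asymptotic is established.

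Next, I would perform the same substitution $\mu'=\pi\mu+2\ln\hat{\Gve}$ used in Lemma~\ref{lem:intasymp} to obtain
\[
\int_{0}^{\infty}\frac{\sqrt{\mu}\,e^{\pi\beta\mu}\,d\mu}{2\hat{\Gve}^{2}\cosh(\pi\mu)+1}=\frac{\hat{\Gve}^{-2\beta}}{\pi}\int_{2\ln\hat{\Gve}}^{\infty}\sqrt{\frac{\mu'+2|\ln\hat{\Gve}|}{\pi}}\,\frac{e^{\beta\mu'}\,d\mu'}{e^{\mu'}+e^{-\mu'+4\ln\hat{\Gve}}+1}.
\]
Factoring out $\sqrt{2|\ln\hat{\Gve}|/\pi}$ from the square root, the ratio $\sqrt{1+\mu'/(2|\ln\hat{\Gve}|)}$ converges pointwise to $1$ as $\hat{\Gve}\to 0^{+}$ and is dominated on $(2\ln\hat{\Gve},\infty)$ by $1+|\mu'|$ for $\hat{\Gve}$ small; since $\Re\beta\in(0,1/2)$, the function $(1+|\mu'|)e^{\Re\beta\cdot\mu'}/(e^{\mu'}+1)$ is in $L^{1}(\bb{R})$. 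The Lebesgue dominated convergence theorem then gives
\[
\int_{0}^{\infty}\frac{\sqrt{\mu}\,e^{\pi\beta\mu}\,d\mu}{2\hat{\Gve}^{2}\cosh(\pi\mu)+1}\sim\frac{\hat{\Gve}^{-2\beta}\sqrt{2|\ln\hat{\Gve}|}}{\pi^{3/2}}\int_{\bb{R}}\frac{e^{\beta\mu'}\,d\mu'}{e^{\mu'}+1}=\frac{\sqrt{2|\ln\hat{\Gve}|}\,\hat{\Gve}^{-2\beta}}{\sqrt{\pi}\,\sin(\pi\beta)}.
\]
Combining this with the overall $R(z)/\sqrt{2\pi}$ prefactor produces (\ref{psiz1}) upon substituting $\pi\beta=\Ga(z)$.

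The main obstacle is the bookkeeping around the $\sqrt{\mu}$ factor, which falls outside the scope of Lemma~\ref{lem:intasymp} as stated and is what introduces the non-power-law $\sqrt{|\ln\hat{\eps}|}$ correction. Handling it requires a careful, direct dominated-convergence argument on the shifted integral, as outlined above, together with the usual Lemma~\ref{lem:W}-style replacement to control the error term $O(|e^{\pi\mu\Ga(z)}|/(\mu\sqrt{\mu}))$ in (\ref{uofx0}); the latter contributes at a relative order $1/|\ln\hat{\eps}|$, which is negligible compared to the leading $\sqrt{|\ln\hat{\eps}|}$ obtained above.
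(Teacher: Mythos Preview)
Your approach is essentially the same as the paper's: substitute the asymptotics (\ref{uofx0}), change variables $\mu'=\pi\mu+2\ln\hat{\Gve}$, and apply dominated convergence to extract the $\sqrt{|\ln\hat{\Gve}|}$ factor. The one organizational difference is that you propose to first strip off $w(z;\mu)$ via Lemma~\ref{lem:W} and only then change variables, whereas the paper keeps $v(z;\mu)$ (your $w$) inside the integral throughout and uses only its boundedness $|v(z;\mu)|\le M(z)$ together with $v\to 1$ in a single dominated-convergence step after the substitution.

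Your Lemma~\ref{lem:W} step does not quite work as stated: condition (ii) there requires the $L^{1}$ difference to stay \emph{bounded}, but
\[
\int_{0}^{\infty}\frac{\sqrt{\mu}\,e^{\pi\Re\Gb\,\mu}\,|w(z;\mu)-1|}{2\hat{\Gve}^{2}\cosh(\pi\mu)+1}\,d\mu
\]
is of order $\hat{\Gve}^{-2\Re\Gb}/\sqrt{|\ln\hat{\Gve}|}\to\infty$, since the error bound in (\ref{uzmuasymp}) only gives $w-1=O(1/\mu)$. The ratio to the leading term is indeed $O(1/|\ln\hat{\Gve}|)$, as you correctly observe at the end, so the conclusion survives; but Lemma~\ref{lem:W} as formulated does not deliver it. The cleanest fix---and exactly what the paper does---is to skip the Lemma~\ref{lem:W} detour: carry $v(z;\mu)$ through the change of variables and, in the dominated-convergence argument, use that $v\bigl(z;(\mu'-2\ln\hat{\Gve})/\pi\bigr)\to 1$ pointwise while $|v|\le M(z)$ uniformly, so that your piecewise dominating function (essentially the paper's $\Phi(\mu')$) absorbs $v$ without any extra work.
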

\begin{proof}
As we have argued before, the asymptotics of $\psi_{\Gve}(z)$ is determined by the asymptotics
of the integrand in (\ref{psieps1}), as $\mu\to\infty$. Thus, we would want to replace
$u(z;\mu)$ by its asymptotics (\ref{uofx0}), $\tanh(\pi\mu)$ by 1, and
$2\cosh(\pi\mu)$ by $e^{\pi\mu}$. We therefore, rewrite (\ref{psieps1}) as
\begin{equation}
\label{psi1z}
\psi_{\Gve}(z)=\frac{R(z)}{\sqrt{2\pi}}\int_{0}^{\infty}\frac{\sqrt{\mu} e^{\Ga(z)\mu}v(z;\mu)}{2\hat{\Gve}^2\cosh(\pi \mu) + 1} d\mu,
\end{equation}
where
\[
v(z;\mu) = \frac{u(z;\mu)}{u_0(z;\mu)}\tanh(\pi \mu),
\]
and where $u_{0}(z;\mu)$ is given by (\ref{u0zmu}). Then, $v(z;\cdot) \in
C([0,\infty))$, due to the representation (\ref{Euler}), as argued in the
proof of Theorem~\ref{th:psiyp}, and $v(z;\mu)\to 1$, as $\mu\to\infty$, by
Lemma~\ref{lem:ux0asym}. Thus, there exists $M(z)>0$, such that $|v(z;\mu)|\le
M(z)$, for any $z\in\GO$.

Let $I(\hat{\eps})$ denote the integral in (\ref{psi1z}). Changing variables
by $\mu'=\pi \mu+2\ln(\hat{\eps})$, we obtain 
\[
I(\hat{\eps})=\nth{\pi}\sqrt{\frac{-2 \ln \hat{\eps}}{\pi}} \hat{\eps}^{\frac{-2 \Ga(z)}{\pi}}
\int_{2 \ln \hat{\eps}}^{\infty} \sqrt{\frac{\mu'-2 \ln \hat{\eps}}{ -2 \ln \hat{\eps}}}
\left( \frac{ e^{\frac{\Ga(z) \mu'}{\pi}} v(z;\frac{\mu'}{\pi} - \frac{2 \ln \hat{\eps}}{\pi})}{e^{\mu'} + e^{-\mu' + 4 \ln \hat{\eps}} + 1} \right) d\mu'
\]
The estimate
\[
\left| \sqrt{\frac{\mu'-2 \ln \hat{\eps}}{ -2 \ln \hat{\eps}}}
\left( \frac{ e^{\frac{\Ga(z) \mu'}{\pi}} v(z;\frac{\mu'}{\pi} - \frac{2 \ln \hat{\eps}}{\pi})}{e^{\mu'} + e^{-\mu' + 4 \ln \hat{\eps}} + 1} \right) \right| \chi_{(2\ln \hat{\eps}, \infty)}(\mu')
\le \Phi(\mu')
\]
where $\Phi(\mu')$ is given by
\[
\Phi(\mu')  = 
\begin{cases} 
   M(z)e^{\left(\frac{\Re\Ga(z)}{\pi} - 1\right)\mu'} & \mu' < 0, \\
   M(z)\sqrt{\mu' + 1} e^{\left(\frac{\Re\Ga(z)}{\pi} - 1\right)\mu'} &\mu' > 0,
   \end{cases}
\]
shows that the Lebesgue dominated convergence theorem is applicable since
$\Re\Ga(z)\in(0,\pi/2)$, by Lemma~\ref{lem:expbnd}. Therefore,

\[
\psi_{\Gve}(z)\sim\frac{R(z)}{\pi^{2}}\sqrt{|\ln\hat{\Gve}|}\hat{\eps}^{\frac{-2 \Ga(z)}{\pi}}
\int_{\mathbb{R}}\frac{e^{\frac{\Ga(z) \mu'}{\pi}}}{e^{\mu'} + 1}d\mu'
= \frac{R(z)\sqrt{|\ln\hat{\Gve}|}}{\pi\sin(\Ga(z))}\hat{\eps}^{\frac{-2 \Ga(z)}{\pi}}.
\]
The theorem is proved.
\end{proof}

In order to estimate $\|\psi_{\Ge}\|_{\mathfrak{H}_{p}}$ (for any $x_{0}\ge
1$), we need a tighter bound on
$\Re\Ga((iy)^{1/p})$, when $y>0$ and $p>1$, which becomes optimal as
$p\to 1^{+}$. Formula (\ref{Ralpha}) show that $\Re\Ga(iy+0)=\pi/2$, for any $y>0$. In fact,
we have the following estimate.
\begin{lemma}
\label{lem:iyp}
Let $y>0$ and $p>1$. Then
$\Re \alpha((iy)^{1/p}) \in \left(\frac{\pi}{2p}, \frac{\pi}{2}\right)$.
\end{lemma}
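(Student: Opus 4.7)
The upper bound $\Re\alpha((iy)^{1/p})<\pi/2$ is already supplied by Lemma~\ref{lem:expbnd}, so the whole task is to prove the lower bound $\Re\alpha((iy)^{1/p})>\pi/(2p)$. The plan is to exploit the defining identity $\cos\alpha(z)=1/z$ together with the fact (from Lemma~\ref{lem:expbnd}) that $\alpha$ maps $\Omega$ into the open strip $\{0<\Re w<\pi/2\}$, and reduce the whole question to an elementary inequality on $\tan$.

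First I would write $z=(iy)^{1/p}=y^{1/p}e^{i\theta}$, $\theta:=\pi/(2p)\in(0,\pi/2)$, and $r:=y^{1/p}>0$, so that $1/z=r^{-1}(\cos\theta-i\sin\theta)$. Setting $\alpha(z)=u+iv$ with $u,v\in\mathbb{R}$, the identity $\cos\alpha(z)=1/z$ expands to
\begin{equation*}
\cos u\cosh v=r^{-1}\cos\theta,\qquad \sin u\sinh v=r^{-1}\sin\theta.
\end{equation*}
By Lemma~\ref{lem:expbnd} applied to $z\in\Omega$ (and clearly $(iy)^{1/p}\in\Omega$ for $y>0$, $p>1$), we have $u\in(0,\pi/2)$, so $\sin u>0$ and $\cos u>0$. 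Since $\sin\theta>0$, the second equation forces $\sinh v>0$, hence $v>0$; and $v$ is finite because both equations have finite right-hand sides. Dividing the two equations yields
\begin{equation*}
\tan u\,\tanh v=\tan\theta.
\end{equation*}
Because $v\in(0,\infty)$, we have $\tanh v\in(0,1)$, so $\tan u>\tan\theta$. As both $u$ and $\theta$ lie in $(0,\pi/2)$, where $\tan$ is strictly increasing, this gives $u>\theta=\pi/(2p)$, completing the proof.

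The only conceivable subtlety is verifying that the sign of $v$ is strictly positive (rather than merely nonzero); this is where the assumption $y>0$ (making $\sin\theta>0$ via $\theta\in(0,\pi/2)$) is crucial, and where the inclusion $u\in(0,\pi/2)$ from Lemma~\ref{lem:expbnd} gets used to rule out the degenerate case $\sin u=0$. Everything else is bookkeeping on the principal branches.
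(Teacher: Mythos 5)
Your proof is correct, but it takes a genuinely different route from the paper's. The paper's proof works from the explicit formula $\alpha(z)=-i\ln z+i\ln(1-i\sqrt{z^{2}-1})$, writes $z^{2}=re^{i\theta_{p}}$ with $\theta_{p}=\pi/p$, and traces how $\arg(z^{2}-1)$ and hence $\arg(1-i\sqrt{z^{2}-1})$ vary as $r$ ranges over $(0,\infty)$, pinning them between two extreme values and reading off the bound on $\Re\alpha(z)$. You instead go straight to the defining identity $\cos\alpha(z)=1/z$, split it into the two real scalar equations $\cos u\cosh v=r^{-1}\cos\theta$ and $\sin u\sinh v=r^{-1}\sin\theta$, determine the sign of $v$, and divide to get the clean identity $\tan u\,\tanh v=\tan\theta$, from which $u>\theta$ is immediate since $\tanh v<1$. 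Your argument is more elementary (no branch-tracking of $\arg$ along a curve, no need for the alternate logarithmic formula for $\alpha$) and arguably more transparent: the tangent identity makes visible exactly why the lower bound is sharp as $p\to 1^{+}$ (then $\theta\to\pi/2$, $\tan\theta\to\infty$, forcing $u\to\pi/2$) and sharp as $r\to\infty$ (then $v\to 0^{+}$). The paper's argument, by contrast, is closer in spirit to the conformal-mapping style used in Lemma~\ref{lem:expbnd} and reuses machinery already set up there. Both are valid; yours trades that structural continuity for a shorter, more self-contained computation.

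Two small points worth spelling out if you were to polish this: the finiteness of $v$ needs no separate justification beyond the fact that $\alpha(z)$ is a well-defined complex number for $z\in\Omega$; and you implicitly use $p>1$ to ensure $\theta=\pi/(2p)<\pi/2$ so that $\tan\theta$ is finite and $\theta$ lies in the domain where $\tan$ is increasing, which is exactly where the hypothesis enters.
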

\begin{proof}
We first observe that for any $z\in\GO$
\[
\Ga(z)=-i\ln z+i\ln(1-i\sqrt{z^{2}-1}).
\]
Indeed, it is easy to see that the \rhs\ of the above formula is analytic in
$\GO$ and agrees with $\arccos(1/z)$ for $z>1$. The same is true for the
\lhs. Therefore, they must agree everywhere in $\GO$.
If $z=(iy)^{1/p}$, then $z^{2}=re^{i\Gth_{p}}$, where $\Gth_{p}=\pi/p\in(0,\pi)$, and $r>0$. It
is now easy to see that $\arg(z^{2}-1)$, as a function of $r$, decreases from $\pi$ at $r=0$ to
$\Gth_{p}$ at $r=+\infty$. Hence, $\arg(-i\sqrt{z^{2}-1})$ decreases from 0 at $r=0$ to
$\Gth_{p}/2-\pi/2$ at $r=+\infty$. Therefore, $\arg(1-i\sqrt{z^{2}-1})$
will also be between 0 and $\Gth_{p}/2-\pi/2$. Thus, 
\[
\Re\Ga(z)=\frac{\Gth_{p}}{2}-\arg(1-i\sqrt{z^{2}-1})\in\left(\frac{\Gth_{p}}{2},\frac{\pi}{2}\right).
\]
\end{proof}

\begin{theorem}
  \label{th:psiub}
For $x_0\ge 1$ and $p>1$, there is a constant $s_p(x_{0})>0$ such that 
\[
\| \psi_{\Gve}\|_{\mathfrak{H}_p} \geq s_p(x_{0})
\begin{cases}
  \Gve^{-\frac{2\Ga(x_{0})}{\pi}-\frac{1}{p}},&x_{0}>1,\\
  \Gve^{-\frac{1}{p}}\sqrt{|\ln\eps|},&x_{0}=1.
\end{cases}
\]
for all sufficiently small $\Gve>0$.
\end{theorem}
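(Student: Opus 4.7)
The plan is to unfold the $\mathfrak{H}_{p}$-norm as an integral along the imaginary axis and plug in the already-computed pointwise asymptotics of $\psi_{\Gve}$. A brief computation (expanding $\|\CF_{p}[\psi_{\Gve}]\|^{2}$ along $z=iy$, followed by the change of variable $u = y^{1/p}$) gives
\[
\|\psi_{\Gve}\|_{\mathfrak{H}_{p}}^{2} = \frac{p}{\pi}\int_{0}^{\infty}\frac{|\psi_{\Gve}(u e^{i\pi/(2p)})|^{2}}{|u e^{i\pi/(2p)} + 1|^{2}}\,du,
\]
reducing the task to a lower bound on $|\psi_{\Gve}(u e^{i\pi/(2p)})|$. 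It suffices to restrict integration to some fixed compact subinterval $[u_{1},u_{2}]\subset(0,\infty)$, along which $z = u e^{i\pi/(2p)}$ remains in $\GO$ and stays bounded away from the branch points $\{0,1\}$.

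On $[u_{1},u_{2}]$ I would invoke the pointwise asymptotics from Theorem~\ref{th:psiyp}(i) (for $x_{0}>1$) and Theorem~\ref{th:psiz1} (for $x_{0}=1$), which yield, respectively,
\[
|\psi_{\Gve}(u e^{i\pi/(2p)})|^{2}\sim C(u)\,\hat{\Gve}^{-4\Re\Gb(u e^{i\pi/(2p)})}\qquad\text{and}\qquad |\psi_{\Gve}(u e^{i\pi/(2p)})|^{2}\sim D(u)\,|\ln\hat{\Gve}|\,\hat{\Gve}^{-4\Re\Ga(u e^{i\pi/(2p)})/\pi},
\]
with $C,D$ continuous and strictly positive on $[u_{1},u_{2}]$ since $R$ and $\sin(\pi\Gb)$ (resp.\ $\sin\Ga$) do not vanish along the arc. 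The decisive input is Lemma~\ref{lem:iyp}: $\Re\Ga(u e^{i\pi/(2p)}) > \pi/(2p)$ \emph{strictly} for every $u > 0$, so that $\Re\Gb(u e^{i\pi/(2p)}) > \Ga(x_{0})/\pi + 1/(2p)$ strictly. Consequently, the pointwise exponents of $\hat{\Gve}$ are strictly more negative than the target ones: $-4\Re\Gb < -4\Ga(x_{0})/\pi - 2/p$ in the first case, and $-4\Re\Ga/\pi < -2/p$ in the second.

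Finally, I would rescale the integrand by the desired $\hat{\Gve}$-power. In the $x_{0}>1$ case, set
\[
F_{\Gve}(u) = \frac{|\psi_{\Gve}(u e^{i\pi/(2p)})|^{2}}{|u e^{i\pi/(2p)} + 1|^{2}\,\hat{\Gve}^{-4\Ga(x_{0})/\pi - 2/p}},
\]
with the obvious analog (extra factor $|\ln\hat{\Gve}|$ in the denominator) for $x_{0}=1$. By the previous paragraph $F_{\Gve}(u)\to+\infty$ pointwise on $(u_{1},u_{2})$; since $F_{\Gve}\ge 0$, Fatou's lemma forces $\int_{u_{1}}^{u_{2}} F_{\Gve}(u)\,du\to+\infty$, which translates to $\|\psi_{\Gve}\|_{\mathfrak{H}_{p}}^{2}\,\hat{\Gve}^{4\Ga(x_{0})/\pi + 2/p}\to\infty$. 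Taking square roots, and using that $\hat{\Gve}$ and $\Gve$ differ by a fixed positive constant while $|\ln\hat{\Gve}|\sim|\ln\Gve|$ as $\Gve\to 0^{+}$, yields the claimed bound with any fixed positive constant $s_{p}(x_{0})$ for all sufficiently small $\Gve$. The real content sits in Lemma~\ref{lem:iyp}: the strict inequality $\Re\Ga((iy)^{1/p}) > \pi/(2p)$ is precisely what supplies the extra $1/p$ in the exponent. Fatou's lemma then absorbs any lack of uniformity in the pointwise asymptotics, sparing us from having to prove any quantitative uniform-in-$z$ estimate.
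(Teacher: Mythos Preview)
Your proposal is correct and follows essentially the paper's approach: unfold the $\mathfrak{H}_p$-norm along the imaginary axis, feed in the pointwise asymptotics of $\psi_{\Gve}$ from Theorems~\ref{th:psiyp}(i) and~\ref{th:psiz1}, invoke Lemma~\ref{lem:iyp} for the exponent, and finish with Fatou's lemma. The only difference is in the endgame: the paper keeps the full half-line, uses the bound $\Re\Ga\ge\pi/(2p)$ to factor out exactly the target power $K_p^{x_0}(\Gve)$ from $|\psi^{x_0}_{\Gve}|$, and lands via Fatou on a finite positive $\liminf$ (an explicit integral defining $s_p(x_0)$), whereas you restrict to a compact subarc and exploit the \emph{strict} inequality $\Re\Ga>\pi/(2p)$ to drive the rescaled integrand to $+\infty$ pointwise --- marginally simpler since it spares you any integrability check at $y\to 0,\infty$, and it yields the stated bound just as well.
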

\begin{proof}
Let
\[
\psi^{x_{0}}_{\Gve}(z)=
\begin{cases}
\frac{R(x_0)R(z)}{2\pi\sin(\pi\Gb(z))}\hat{\Gve}^{-2\Gb(z)},&x_{0}>1,\\
  \frac{R(z)\sqrt{|\ln\hat{\eps}|}}{\pi\sin(\Ga(z))}\hat{\eps}^{\frac{-2\Ga(z)}{\pi}},&x_{0}=1,
\end{cases}\qquad\Gb(z)=\frac{\Ga(x_{0})+\Ga(z)}{\pi}.
\]
Then, Theorems~\ref{th:psiyp}(i) and \ref{th:psiz1} say that
$\psi_{\Gve}(z)\sim\psi^{x_{0}}_{\Gve}(z)$ for any $z\in\GO$ and any $x_{0}\ge
1$. We then write
\[
  \|\psi_{\Gve}\|_{\mathfrak{H}_{p}}^{2}=
  \nth{\pi}\int_{0}^{\infty}\frac{|\psi_{\Gve}((iy)^{1/p})|^{2}}{N_{p}(y)|\psi^{x_{0}}_{\Gve}((iy)^{1/p})|^{2}}
  |\psi^{x_{0}}_{\Gve}((iy)^{1/p})|^{2}dy,
\]
where $N_{p}(y)=y^{\frac{p-1}{p}}|1+(iy)^{1/p}|^{2}$.
By Lemma~\ref{lem:iyp}, we estimate
\begin{equation}
  \label{psi0lb}
    |\psi^{x_{0}}_{\Gve}((iy)^{1/p})|\ge A_{p}(x_{0},y)K_{p}^{x_{0}}(\Gve),
  \end{equation}
where 
\[
A_{p}(x_{0},y)=
\begin{cases}
  \frac{R(x_0)|R((iy)^{1/p})|(2\pi)^{\frac{\Ga(x_{0})}{\pi}+\frac{1}{2p}}}{2\pi|\sin(\pi\Gb((iy)^{1/p}))|},&x_{0}>1,\\
\frac{|R((iy)^{1/p})|(2\pi)^{\frac{1}{2p}}}{\pi|\sin(\pi\Ga((iy)^{1/p}))|},&x_{0}=1,
\end{cases}\quad
K_{p}^{x_{0}}(\Gve)=
\begin{cases}
    \Gve^{-\frac{2\Ga(x_{0})}{\pi}-\frac{1}{p}},&x_{0}>1,\\
    \Gve^{-\frac{1}{p}}\sqrt{|\ln\Gve|},&x_{0}=1.
  \end{cases}
\]
Thus, we obtain the lower bound
\[
\|\psi_{\Gve}\|_{\mathfrak{H}_{p}}^{2}\ge\frac{K_{p}^{x_{0}}(\Gve)^{2}}{\pi}
\int_{0}^{\infty}\frac{|\psi_{\Gve}((iy)^{1/p})|^{2}}{N_{p}(y)|\psi^{x_{0}}_{\Gve}((iy)^{1/p})|^{2}}
  A_{p}(x_{0},y)^{2}dy.
\]
Now, by Fatou's lemma, we have, taking into account
$\psi_{\Gve}(z)\sim\psi^{x_{0}}_{\Gve}(z)$, as $\Gve\to 0^{+}$,
\[
\limi_{\Gve\to 0}\frac{\|\psi_{\Gve}\|_{\mathfrak{H}_{p}}^{2}}{K_{p}^{x_{0}}(\Gve)^{2}}\ge 
\frac{1}{\pi}\int_{0}^{\infty}\frac{A_{p}(x_{0},y)^{2}}{N_{p}(y)}dy=:2s_{p}(x_{0})^{2}>0.
\]
It follows that for all sufficiently small $\Gve>0$, we have
$\|\psi_{\Gve}\|_{\mathfrak{H}_{p}}\ge s_{p}(x_{0})K_{p}^{x_{0}}(\Gve)$.
\end{proof}
We now have everything we need to prove Theorem~\ref{th:phiHp}.
\begin{proof}[Proof of Theorem~\ref{th:phiHp}]
We have, using (\ref{GeGve})
\begin{equation}
  \label{phiepsHp}
  \|\phi_{\Ge}\|_{\mathfrak{H}_{p}}=\frac{\Ge\|\psi_{\Gve}\|_{\mathfrak{H}_{p}}}{\|\psi_{\Gve}\|_{2}}
=\frac{\|\psi_{\Gve}\|_{\mathfrak{H}_{p}}}{\|\psi_{\Gve}\|}.
\end{equation}
It only remained to observe that Theorems~\ref{th:psiyp}(iii),
\ref{th:psi1}(ii) can be written as
\[
\|\psi_{\Gve}\|\sim C_{0}(x_{0})K_{p}^{x_{0}}(\Gve)\Gve^{\frac{1}{p}-1},
\]
where
\[
C_{0}(x_{0})=
\begin{cases}
  \frac{(2\pi)^{\Gb(x_{0})/2}}{\pi}\sqrt{\frac{x_0\arccos(1/x_{0})}{2(x_0^2-1)}},&x_{0}>1,\\
\frac{\sqrt{2}}{\pi},&x_{0}=1.
\end{cases}
\]
Combining this with Theorem~\ref{th:psiub} and applying to (\ref{phiepsHp}),
we obtain that
\[
\|\phi_{\Ge}\|_{\mathfrak{H}_{p}}\ge
\frac{s_{p}(x_{0})}{2C(x_{0})}\Gve^{1-\frac{1}{p}}
\]
for all sufficiently small $\Gve>0$. The theorem is now proved.
\end{proof}

\end{document}